\definecolor{darkgreen}{rgb}{0.5,0.25,0}
\definecolor{darkblue}{rgb}{0,0,1}
\definecolor{answerblue}{rgb}{0,0,0.75}
\newcommand*{\mailto}[1]{\href{mailto:#1}{\nolinkurl{#1}}}
\newcommand{\ep}{\varepsilon}
\newcommand{\pd}{\partial}
\newcommand{\Ex}{\mathbb{E}}
\renewcommand{\d}{\mathrm{d}}
\renewcommand{\ge}{\geqslant}
\renewcommand{\le}{\leqslant}
\newcommand{\doublehookrightarrow}{
    \lhook\joinrel\relbar\mspace{-12mu}\hookrightarrow
}
\newcommand{\one}[1]{\mathds{1}_{\{#1\}}}
\newcommand{\R}{\mathbb{R}}
\newcommand{\T}{\mathbb{T}}
\newcommand{\N}{\mathbb{N}}
\newcommand{\abs}[1]{\left | #1 \right |}
\newcommand{\norm}[1]{\left\| #1 \right\|}
\newcommand{\bk}[1]{ \left(  #1 \right)}
\theoremstyle{theorem}
\newtheorem{thm}{Theorem}[section]
\newtheorem{prop}[thm]{Proposition}
\newtheorem{lem}[thm]{Lemma}
\theoremstyle{definition}
\newtheorem{defin}[thm]{Definition}
\theoremstyle{remark}
\newtheorem{rem}[thm]{Remark}
\newtheorem{assume}[thm]{Assumption}
\numberwithin{equation}{section}
\title[viscous stochastic variational wave equation]
{The viscous variational wave equation with transport noise}
\author[P.H.C. Pang]{Peter H.C. Pang}
\address[Peter H.C. Pang]{Department of Mathematics\\
University of Oslo\\
NO-0316 Oslo\\ Norway}
\email{\mailto{ptr@math.uio.no}}
\subjclass[2020]{Primary: 35R60, 35F55; Secondary: 35D30}
\keywords{variational wave 
equation, stochastic perturbation, transport 
noise, existence, viscous approximation, 
commutator estimate, Skorokhod--Jakubowski 
representation}
\date{\today}
\begin{document}

\begin{abstract}
This article considers the variational wave equation 
with viscosity and transport noise as a  
system of three coupled nonlinear stochastic partial differential equations. 
We prove pathwise global 
existence, uniqueness, and temporal continuity 
of solutions to this system in $L^2_x$. 
Martingale solutions are extracted from 
a two-level Galerkin approximation  
via the Skorokhod--Jakubowski theorem.  
We use the apparatus of Dudley maps to 
streamline this stochastic compactness method, 
bypassing the usual martingale identification 
argument. 
Pathwise uniqueness for the system 
is established through a renormalisation 
procedure that involves double commutator estimates 
and a delicate handling of noise and nonlinear terms. 
New model-specific commutator estimates 
are proven. 
\end{abstract}

\maketitle
\setcounter{secnumdepth}{2}

\setcounter{tocdepth}{1}
{\small \tableofcontents}

\section{Introduction}

\subsection{Background} 
We study the well-posedness and other 
solution properties of a viscous variational 
wave equation \eqref{eq:vvw} -- \eqref{eq:constitutive}
with transport type noise in this paper. 
Our problem is posed on $[0,T] \times \T$ 
and we understand $\T$ to be $[0,1]$ with 
periodic boundary.

The deterministic variational 
wave equation (rigorously studied in \cite{ZZ2001a} and 
numerous other works referenced below) is given by:
\begin{align}\label{eq:main_det}
\pd_{tt}^2 u - c(u) \,\pd_x \bk{c(u) \,\pd_x u} = 0.
\end{align}
It is natural to consider the 
equation using Riemann invariants 
\begin{align}\label{eq:riemann_inv}
R := \pd_t u + c(u) \,\pd_x u, \qquad 
S := \pd_t u - c(u) \,\pd_x u.
\end{align}
The variational wave equation 
\eqref{eq:main_det} can be formally 
transformed into the system 
\begin{equation}\label{eq:main_det2}
\begin{aligned}
\pd_t R - c(u)\, \pd_x R 
	& = \tilde{c}(u) (R^2 - S^2),\\
\pd_t S + c(u)\, \pd_x S 
	& = -\tilde{c}(u) (R^2 - S^2), \\
2 c(u) \pd_x u & = (R - S),
\end{aligned}
\end{equation}
where $\tilde{c} := c'/(4c)$ (note that 
some authors use the notation 
$\tilde{c}' := c'/(4c)$), with initial conditions
\begin{align*}
R^0 =  v^0 + c(u^0)\,\pd_x u^0, \qquad
 S^0 = v^0 - c(u^0) \,\pd_x u^0.
\end{align*}

There has been sustained interest in these 
partial differential equations (PDEs). 
The equation \eqref{eq:main_det} arises 
as the Euler--Lagrange equation 
of variational principle for the energy 
$\int_0^T \int_\T \abs{\pd_t u}^2+ \abs{c(u) \,\pd_x u}^2\,\d x\,\d t$. 
They are pertinent in systems 
such as wave maps from $4$-dimensional Minkowski space 
to $\mathbb{S}^2$, in geometric optics, 
in orientation waves of the director fields 
of nematic liquid crystals with Oseen--Franck potential 
energy; see, e.g., \cite[Section 2]{GHZ1997}, 
\cite{AH2007,HS1991}, and references there 
for a more complete discussion. 
Mathematically, like the related Camassa--Holm and 
Hunter--Saxton equations, the variational wave 
equation exhibits supercritical behaviour. 
Even for smooth initial data, solutions can 
exhibit wave-breaking, when $u$ remains 
bounded and continuous,
and $\pd_x u \sim {R - S}$ (or  $\pd_t u = R + S$, in the deterministic setting) 
becomes unbounded \cite{GHZ1996} 
(and \cite{DHW2020} for spherically symmetric solutions in higher dimensions). 
Continuation past wave-breaking 
is non-unique.
Notable among these continuations 
are conservative solutions, 
in which the associated energy 
is conserved a.e.~in time throughout the 
evolution, and dissipative solutions, 
for which the associated energy attenuates in time.

Dissipative weak solutions in $H^1_x$ to \eqref{eq:main_det} 
have been studied in the deterministic 
setting in \cite{ZZ2001a,ZZ2001b, 
	ZZ2003, ZZ2005a,ZZ2005b} using 
renormalisation techniques and Young measures. 
Using a variable transformation to deal with 
singularities arising from energy concentration, 
dissipative solutions have also been studied along characteristics 
in \cite{BH2016} under the positivity condition $c' > 0$. 
Related methods were used to establish 
well-posedness of conservative solutions 
in \cite{BCZ2015, BZZ2007,BZ2006} . 
A semigroup of conservative solutions 
was constructed in \cite{HR2011}, which 
also leveraged their methods to construct 
a scheme for computing conservative 
solutions numerically. 
A convergent finite difference scheme for 
dissipative solutions with $c' > 0$ and data 
$R^0, S^0 \le 0$ (sometimes called 
rarefactive solutions), 
was given in \cite{HKR2009}. Other finite 
difference schemes \cite{Web2016} and 
discontinuous Galerkin schemes \cite{AK2017a,AK2017b,LY2018} 
for \eqref{eq:main_det} and 2 (spatial) dimensional generalisations 
have been proposed and numerically verified. 
Weak travelling wave 
solutions were recently considered in \cite{GR2021}.

The inclusion of transport type noise in 
the study of variational wave-type equations 
is inspired by geometric-mechanical and 
physical considerations \cite{Fla2011,Holm:2015tc}. 
Transport type noises have proven mathematically rich in 
diverse contexts, of which the regularisation by 
noise phenomenon is a prominent 
example (see \cite{Flandoli:2021ab,FGP2010} and references contained there). 
By putting \eqref{eq:riemann_inv} in the 
stochastic wave equation with gradient noise
$$
\d v - c(u) \,\pd_x\bk{c(u) \,\pd_x u} = 2 \sigma \pd_x v \circ \d W, \qquad v = \,\pd_t u,
$$
we are led to consider the following stochastic variational 
wave equation:
\begin{equation}\label{eq:main_svw}
\begin{aligned}
\d R - c(u)\, \pd_x R \,\d t
	& = \tilde{c}(u) (R^2 - S^2)\,\d t 
	+ \sigma \,\pd_x \bk{R + S}\circ \d W,\\
\d S + c(u)\, \pd_x S \,\d t
	& = -\tilde{c}(u) (R^2 - S^2) \,\d t 
	+\sigma\, \pd_x \bk{R + S}\circ \d W , \\
2 c(u) \pd_x u & = (R - S), \qquad (t,x) \in [0,T] \times \T,
\end{aligned}
\end{equation}
where we recall that $\tilde{c} = c'(u)/(4c(u))$.

Subsequently throughout the paper, we maintain 
the following assumptions on the coefficients 
$c$ and $\sigma$:
\begin{assume}\label{sum:c_sigma}
There is a fixed constant $\kappa > 1$ for which 
 $c : \R \to [\kappa^{-1}, \kappa]$, and 
 $c \in C^1(\R)$. In particular, 
we assume that $\abs{c'} \le \kappa$ also. 
Moreover, we assume that $\sigma \in W^{2,\infty}(\T)$. 
\end{assume}
Our assumptions 
immediately imply that 
$\abs{\tilde{c}}:= c'/\bk{4c} \lesssim \kappa^2$, 
but crucially, we do not assume that $c'$ is positive.

Formal small amplitude or high-frequency 
limits  of 
\eqref{eq:main_svw} include the Hunter--Saxton 
equation with gradient noise:
\begin{align*}
0 &= \d q + \bk{\pd_x (u\, q)- \frac12 q^2 }\,\d t 
	+ \pd_x \bk{\sigma q}\circ \d W,  \quad 
q = \pd_x u,
\end{align*}
and, to second order, the Camassa--Holm equation 
with gradient noise:
\begin{align*}
0 &= \d u + \bk{u \pd_x u + \pd_x P}\,\d t 
	+ \sigma\,\pd_x u \circ \d W, \quad 
P = K*\bk{u^2 + \frac12 \bk{\pd_x u }^2}, 
\end{align*}
where $K$ is the Helmholtz kernel $\bk{1 - \pd_{xx}^2}^{-1}$ 
(see \cite[Section 2.3]{GHZ1997}, in the deterministic setting).
Dissipative $H^1$ weak solutions to these 
stochastic partial differential equations (SPDEs) were studied in
\cite{GHKP2022,Holden:2020aa, HKP2023} on $\T$ and 
with linear multiplicative noise on $\R$ in \cite{Chen:2021tr}.
The development of this paper follows 
in the vein of these preceding works. 
Related literature on PDE-strong solutions to 
stochastic Camassa--Holm 
equations is vast, and we mention local 
well-posedness results of \cite{Albeverio:2021uf} on $\R$ 
using Kato's operator theoretic methods  for gradient noise. 
There are corresponding well-posedness 
results for additive noise \cite{Chen:2012aa}, 
for multiplicative noise (see \cite{Tang:2018aa} and references there), 
gradient jump noise (see \cite{CL2022} and references there), 
and even pseudo-differential noises (see \cite{Tang2023} and references there).
A recent, first foray into stochastic variational wave equations is \cite{GV2023}, 
where the authors considered well-posedness 
on $\T$ with additive, cylindrical, It\^o noise. 
The additive nature of the noise there facilitates the 
transformation of the stochastic variational wave 
system in $(R,S)$-variables into a system 
of random PDEs.

Following \cite{GHKP2022,HKP2023}, 
in order to derive existence of dissipative martingale and 
PDE-weak solutions of \eqref{eq:main_svw} 
with general (low regularity) initial data in $L^2(\T)$, 
which we plan to carry out in subsequent work, 
a first step is to establish pathwise 
well-posedness to a viscous approximation. 
This is the goal of the present paper. 

The cross variation $\frac12 [\sigma \pd_x \bk{R + S}, W]$ 
is $\sigma \pd_x \bk{\sigma\,\pd_x \bk{R + S}}$ (sans extra 
$1/2$ factor). 
The formal conversion between Stratonovich 
and It\^o noise (which can be made rigorous by 
requiring that $t \mapsto 
	\int_{\T}\pd_x \varphi R\,\d x$ 
and $t \mapsto \int_{\T} 
	\pd_x \varphi S\,\d x $ 
be continuous, adapted semi-martingales for any $\varphi \in C^2(\T)$, 
see, e.g., \cite[page 1460]{AF2011}) 
then motivates us to study
\begin{equation}\label{eq:vvw}
\begin{aligned}
\d R 
 - \nu \pd_{xx}^2 R\,\d t
&=	 \pd_x\bk{c(u)\, R}\,\d t
	 -  \tilde{c}(u) \bk{R - S}^2\,\d t\\
&\quad\,\, +	\sigma\,  \pd_x \bk{R + S}\,\d {W} 
	+ \sigma \,\pd_x \bk{\sigma\, \pd_x\bk{R + S}}\,\d t, \\
\d S 
- \nu \pd_{xx}^2S\,\d t
	 &= 	- \pd_x \bk{c(u) S}\,\d t 
	 -  \tilde{c}(u) \bk{R- S}^2\,\d t\\
	&\quad\,\, +  \sigma\, \pd_x\bk{R + S}\,\d {W} 
	+  \sigma \,\pd_x \bk{\sigma\, \pd_x \bk{R + S}}\,\d t.
\end{aligned}
\end{equation}
Here we have used the  final ``constitutive" 
equation in \eqref{eq:main_svw}  relating 
$u$ and $(R, S)$ 
\begin{align}\label{eq:constitutive}
2c(u) \,\pd_x u = R - S
\end{align}
to write $c(u) \pd_x R + \tilde{c}(u) \bk{R^2 - S^2}$ 
in the conservative form as 
$ \pd_x \bk{c(u) R} - \tilde{c}(u) \bk{R - S}^2$ 
in the $R$-equation, and 
$- c(u) \pd_x S - \tilde{c}(u)\bk{R^2 - S^2}$ 
as 
$ -\pd_x \bk{c(u) S} - \tilde{c}(u) \bk{R - S}^2$ 
in the $S$-equation.

Our main theorem is:
\begin{thm}\label{thm:main}

Let $(R^0,S^0) \in\bk{L^2(\T)}^2$ have 
finite $2p_0 > 4$ moments, and be 
such that $\int_\T R^0 - S^0 \,\d x = 0$.
On Assumption \ref{sum:c_sigma}, the viscous 
variational wave equation with transport noise 
\eqref{eq:vvw} -- \eqref{eq:constitutive}, 
with initial condition $(R^0,S^0)$ 
has a unique pathwise solution $(R,S)$ in 
the sense of Definition \ref{def:str_sol}. 
Moreover, $R$ and $S$ have continuous 
paths in $L^2(\T)$, in fact $(R,S) \in 
	\bk{L^{\overline{p}}(\Omega;C([0,T];L^2(\T)))}^2$
for any $\overline{p} < 2 p_0$. 
\end{thm}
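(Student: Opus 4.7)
My plan is to follow the now-standard pipeline of stochastic compactness, pathwise uniqueness, and Gy\"ongy--Krylov. I would begin with a two-level approximation: a Faedo--Galerkin projection $P_n$ of \eqref{eq:vvw} onto the span of the first $n$ Fourier modes, keeping $\nu>0$ fixed; the resulting finite-dimensional SDE is well-posed by standard theory. The constitutive relation \eqref{eq:constitutive} would be solved as a spatial ODE for $u$ given $R-S$, and the compatibility $\int_\T (R-S)\,\d x = 0$ is propagated by \eqref{eq:vvw} because the cubic drift $\tilde{c}(u)(R-S)^2$ and the noise and It\^o-correction contributions appear identically in the $R$ and $S$ equations and cancel in $R-S$. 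For uniform moment estimates I would apply It\^o's formula to $\|R_n+S_n\|_{L^2}^2$ and $\|R_n-S_n\|_{L^2}^2$ separately. In the former, the Stratonovich structure manifests as cancellation between the It\^o correction $2\sigma\,\pd_x(\sigma\,\pd_x(R+S))$ and the quadratic variation of the noise, leaving only a Gronwall-integrable remainder of the form $\int_\T \pd_x(\sigma\,\pd_x\sigma)(R+S)^2\,\d x$; the latter is unforced by the noise. The viscosity yields $L^2_t H^1_x$ control, and the one-dimensional Sobolev embedding $H^1(\T)\hookrightarrow L^\infty(\T)$ lets me absorb the cubic drift $\int \tilde{c}(u)(R-S)^2(R+S)\,\d x$ and the transport contributions into the dissipation plus a Gronwall-integrable remainder via Young's inequality. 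Higher moments up to $L^{2p_0}(\Omega)$ then follow by iteration and Burkholder--Davis--Gundy.

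Next, I would extract a martingale solution through Skorokhod--Jakubowski, which is required because the natural path space involves $C([0,T]; L^2(\T)_{\mathrm{weak}})$ and is not metrisable. On a new probability space I would obtain almost-sure convergent copies of $(R_n, S_n, W)$ and use Dudley maps to pass to the limit in the stochastic integral directly: viewing the It\^o integral as a measurable map $(\Phi, W) \mapsto \int_0^\cdot \Phi\,\d W$ between appropriate Polish-type spaces, almost-sure convergence of the integrand and the driver transports to almost-sure convergence of the integrals, bypassing the classical martingale identification via elementary functions.

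The technical heart, and the expected principal obstacle, is pathwise uniqueness. For two solutions $(R_i, S_i)_{i=1,2}$ with identical data and noise, the differences $(\bar R, \bar S) := (R_1-R_2, S_1-S_2)$ satisfy a linearised SPDE whose transport coefficient $c(u_1)$ lives only in $L^\infty_t H^1_x$ and whose drift includes a non-conservative perturbation $\pd_x((c(u_1)-c(u_2))R_2)$, with $c(u_1)-c(u_2)$ recovered from $\bar R-\bar S$ through the nonlinear constitutive equation. To close the $L^2$ estimate I would employ a DiPerna--Lions-style renormalisation, mollifying the difference equation and commuting with the mollifier. A single commutator handles the deterministic transport, while the Stratonovich-corrected transport noise produces a \emph{double} commutator whose vanishing in the mollification limit requires the new, model-specific commutator estimates advertised in the abstract. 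Once pathwise uniqueness is secured, Gy\"ongy--Krylov applied to joint laws of pairs of Galerkin iterates promotes the martingale existence to pathwise existence on the original probability space; continuity of the $L^2(\T)$-valued trajectories then follows from the It\^o decomposition of $\|R\|_{L^2}^2$ and $\|S\|_{L^2}^2$ combined with the already-established weak continuity.
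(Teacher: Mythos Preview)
Your overall architecture (Galerkin compactness, Skorokhod--Jakubowski with Dudley maps, pathwise uniqueness via renormalisation with double commutators, Gy\"ongy--Krylov, then temporal continuity) matches the paper's. There are, however, two gaps.

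The more serious one is in your a priori energy estimate. You propose to ``absorb the cubic drift $\int \tilde{c}(u)(R-S)^2(R+S)\,\d x$'' into the dissipation via $H^1(\T)\hookrightarrow L^\infty(\T)$ and Young's inequality. That route yields
\[
\Bigl|\int_{\T}\tilde{c}(u)(R-S)^2(R+S)\,\d x\Bigr|
\le C\|R-S\|_{H^1_x}\|R-S\|_{L^2_x}\|R+S\|_{L^2_x}
\le \epsilon\|R-S\|_{H^1_x}^2 + C_\epsilon\|R-S\|_{L^2_x}^2\|R+S\|_{L^2_x}^2,
\]
so the Gronwall coefficient is $\|R-S\|_{L^2_x}^2$ (or $\|R+S\|_{L^2_x}^2$), not a constant. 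The resulting differential inequality is of Riccati type, gives only a local-in-time bound, and in particular does not let you push the cut-off stopping time to $T$ uniformly. The paper closes the estimate by a different mechanism: the constitutive relation gives $\pd_x c(u)=2\tilde c(u)(R-S)$, and with it the cubic term cancels \emph{exactly} against the $\pd_x c(u)$ contribution of the transport term (this is $I_1+I_2=0$ in the paper's proof). In your $(V,q)=(R+S,R-S)$ variables the same cancellation reads
\[
\int_{\T} V\,\pd_x\bigl(c(u)q\bigr)\,\d x - 2\int_{\T} V\,\tilde c(u)\,q^2\,\d x
= \int_{\T} V\,c(u)\,\pd_x q\,\d x,
\]
and the right-hand side is now genuinely absorbable, being bounded by $\epsilon\|\pd_x q\|_{L^2_x}^2 + C_\epsilon\|V\|_{L^2_x}^2$. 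This structural cancellation, not the Sobolev embedding, is what makes the energy estimate global and $\nu$-uniform.

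The minor gap: the Galerkin SDE you write down has a quadratic nonlinearity, so ``well-posed by standard theory'' (global Lipschitz) does not apply directly. The paper's second approximation level is not the viscosity $\nu$ but a cut-off $Q_k$ on the quadratic term, which restores Lipschitzness; the paper also smooths $c$ to $c_N\in C^2$ so that $\tilde c_N(u_N)$ is Lipschitz in $(R_N,S_N)$. One then removes $Q_k$ using the (now correctly derived) uniform energy bound.
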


One important aspect of this paper is the 
way that \eqref{eq:constitutive} is solved on $\T$. 
Our key tool is the inverse $\pd_x^{-1}$ of the spatial derivative 
on zero average functions in $H^s_x$, $s \in \R$, on the periodic 
domain, defined in \cite[Equation 2.12]{Hol2010} 
(see \eqref{eq:dx_inverse} for details). 
Consider the anti-derivative of $c$:
\begin{align}\label{eq:F_defin}
F(u) := \int_0^u c(r)\,\d r.
\end{align}
Since $c$ is assumed to be uniformly 
positive, $F$ is strictly increasing and 
has an inverse, and $u$ can be readily 
recovered from $F(u)$. 

Let $q:= R - S$. By \eqref{eq:constitutive}, $q = 2\pd_x F(u)$, 
and hence has zero average over $\T$. 
We now use \eqref{eq:constitutive} 
and the inverse $\pd_x^{-1}$ to produce a 
candidate solution $u$ by writing: 
\begin{align}\label{eq:constitutive_solve}
u = F^{-1} (\frac12 \pd_x^{-1} q) 
= F^{-1}(\frac12\int_0^x q(t,y) \,\d y 
	- \frac{x}2\underbrace{\int_\T q(t,y)\,\d y}_{ = 0} 
	-  \tilde{h}(t)),
\end{align}
where we choose $h(t)$ so that $F(u(t,x))$ 
has zero spatial average thus:
$$
\tilde{h}(t) := \frac12 \int_\T \bigg[\int_0^y q(t,z)\,\d z 
		- y\underbrace{\int_\T q(t,z)\,\d z}_{= 0}
		\bigg]\,\d y.
$$
Since $\tilde{h}$ is not spatially dependent, \eqref{eq:constitutive} holds. 
Requiring $F(u)$ to have zero spatial average 
is a {choice}, akin to choosing the constant of 
integration. This choice is not entirely arbitrary; 
we elaborate on this later in Remark \ref{rem:u_construct_choice}. 
Were the problem posed on $\R$, 
we would have been able to integrate 
$q$ directly over $(-\infty, x]$ to obtain $F(u)$, 
by fixing $u(-\infty) = 0$ (as in \cite[Equation (1.8)]{HKR2009}).  

Having solved \eqref{eq:constitutive} thus, 
\eqref{eq:vvw} can be analysed using  
techniques inspired by \cite{ZZ2001a} and subsequent 
works by those authors in the context of variational wave equations, with further 
ingredients developed in the stochastic setting 
in \cite{GHKP2022, HKP2023} 
and in works cited there.  These methods are sketched out 
in finer detail in Section \ref{sec:strategy}.

\smallskip

\subsection{Viscosity}\label{sec:viscosity}

Weak solutions in the deterministic setting is 
usually studied via a limit of one-sided linear approximations 
to nonlinear terms in \eqref{eq:main_det2} by setting 
 \begin{align*}
\tilde{Q}_\ep(v) := \begin{cases}
\ep^{-1} \bk{v - \ep^{-1}} & v \ge \ep^{-1}\\
\frac12 v^2 & -\infty < v < \ep^{-1}
\end{cases},
\end{align*}
so that
$\tilde{Q}_\ep(R)$ replaces $R^2$ in the $R$-equation and 
$\tilde{Q}_\ep(S)$ replaces  $S^2$ in 
the $S$-equation in \eqref{eq:main_det2} 
\cite{ZZ2003,ZZ2005b}. We shall have need for 
similar linearisation in a two-level Galerkin 
approximation in Section \ref{sec:scheme}.

Viscous approximations, adding $\nu \pd_{xx}^2 R$ 
to the $R$-equation and $\nu\pd_{xx}^2 S$ to the 
$S$ equation was used to study local 
classical ($H^k_x$-) and global rarefactive 
solutions to \eqref{eq:main_det2} 
on the real line in \cite{ZZ2001a, ZZ2005b}. 
There, a Picard iteration 
was used to establish existence of solutions. 
In the stochastic setting, it can be easier, 
because of the stochastic integral, to consider 
Galerkin approximations, as was done for the 
viscous stochastic Camassa--Holm equation in \cite{HKP2023}.
Similar approximations are much more plentifully 
witnessed in the related stochastic Navier--Stokes 
and fluid equations literature. 
We explain the strategy of our well-posedness 
proof in the upcoming Section \ref{sec:strategy}. 

A direct substitution of the inviscid equation 
Riemann invariants \eqref{eq:riemann_inv} 
into the standard viscous approximation 
$$
\d v = c(u) \,\pd_x\bk{c(u)\pd_x u}\,\d t  
	+ \nu\pd_{xx}^2v \,\d t + \sigma \pd_x v \circ \d W
$$
will have given us viscous 
terms of the form $\nu\pd_{xx}^2 \bk{R + S}$ in 
both the $R$ and $S$ equations. 
The $L^2_tH^1_x$ 
inclusion for $\bk{R + S}$ arising from the (cross-)diffusion, 
but not for $R$ and $S$ separately, is insufficient 
for passing to the limit in Galerkin approximations 
of the nonlinear term $\tilde{c}(u)\bk{R - S}^2$ 
in \eqref{eq:vvw}. As we shall see in Remark 
\ref{rem:u_construct_choice}, a good choice 
for the viscosity is related to the construction 
\eqref{eq:constitutive_solve} of $u$. 

The need to extract limits in the nonlinear term 
to prove the existence of weak solutions with 
non-smooth initial data, i.e., data $R^0$ and $S^0$ 
which are $L^2(\T)$-valued, is a mathematical reason for 
studying the viscous equation at all.
Variational wave type equations characteristically 
possess {\em a priori} $L^{2 + \alpha}_{\omega, t, x}$ bounds 
for smooth solutions (and $c' > 0$).  Uniformly 
$L^{1 + \alpha/2}_{\omega, t, x}$-bounded approximations 
$\bk{R_N - S_N}^2$ of the nonlinear term can 
then be shown to converge 
weakly to a limit $\overline{\bk{R - S}^2}$, 
which by the uniform  bound is kept 
from becoming a measure in $(\omega, t, x)$.
The fact that this is not a measure 
is important for any subsequent 
renormalisation and propagation of 
compactness argument seeking to establish 
$\overline{\bk{R - S}^2} = \bk{R - S}^2$, $(\omega, t, x)$-a.e.
A similar strategy was pursued in the 
deterministic setting for the variational 
wave equation by e.g., \cite{ZZ2003,ZZ2005b}, 
and carried out in the much more delicate  
stochastic setting for the Camassa--Holm equation 
with gradient noise in \cite{GHKP2022,HKP2023}.

The existence of 
$L^{2 + \alpha}_{\omega, t ,x}$ bounds 
depends intimately on the structure of 
the nonlinearity in both equations of \eqref{eq:main_svw}.
These bounds are lost when projection operators 
in simple Galerkin approximations interfere 
with intricate algebraic manipulations leading 
to a uniform  estimate. 
The viscous approximation gives us 
$H^1_x (\hookrightarrow L^\infty_x)$  bounds 
by which Galerkin approximations can 
be shown to converge even in the nonlinearity, 
replacing the $L^{2 + \alpha}_{\omega, t, x}$ bounds 
at the viscous level. 
Viscous terms leave the structure of the nonlinearity 
intact so that uniform $L^{2 + \alpha}_{\omega, t, x}$ control 
can be exploited in a secondary limit 
(in this case, 
the vanishing viscosity limit to be dealt with in 
subsequent work). 
 Let us take the opportunity in mentioning 
subsequent work to point out that in the multidimensional 
setting, even the viscous, spherically symmetric problem 
corresponding to, e.g.~the deterministic equations studied 
in \cite{DHW2020} remains, as far as we know, open.

\subsection{Strategy and outline of paper}\label{sec:strategy}

In the remainder of this paper, 
we first give the precise definition of 
solutions in the following subsection. 
Afterwards, our general strategy 
adapts that of \cite{HKP2023} and 
other works referenced there.

In Section \ref{sec:scheme}, we construct a two-level 
Galerkin scheme with a cut-off function on the 
nonlinear term $\bk{R - S}^2$. This cut-off level defines 
a stopping time for which the cut-off free Galerkin 
approximations hold exactly. An  
additional limit needs to be taken to establish the 
well-posedness of the scheme where the cut-off is 
sent to $\infty$. As a part of this scheme, it is 
necessary to smooth out the nonlinear wave-speed 
$c$. In Sections \ref{sec:apriori_tightness} 
-- \ref{sec:u_N_tightness}, 
tightness of laws for the solutions of the 
Galerkin system is established. 
We prove energy estimates for the sequence 
of (cut-off free) Galerkin approximants 
$R_N$ and $S_N$ to $R$ and $S$, showing they have laws 
that are tight in the quasi-Polish spaces
$C([0,T];L^2(\T)-w)$ and $L^2([0,T]\times \T)$. 
After that, we show that approximations $u_N$ 
to $u$ (similarly constructed as 
\eqref{eq:constitutive_solve}) 
converge in law in $C([0,T] \times \T)$. 
As indicated, we elaborate on the construction 
\eqref{eq:constitutive_solve} in Section 
\ref{sec:galerkin_compactness} and in the concluding 
Remark \ref{rem:u_construct_choice} of this section.

In Section \ref{sec:SJthm}, 
the tightness results of Section 
\ref{sec:galerkin_compactness} are used via the 
Skorokhod--Jakubowski theorem to prove the 
existence of martingale solutions. Instead of the 
standard martingale identification argument 
of \cite{Brzezniak:2011aa} (and many others 
subsequently), we establish the approximating 
equations on a new probability space 
in Proposition \ref{thm:nth_eq_Nlimit} 
using measure-preserving maps introduced 
by Dudley \cite{Dud1985} via the version of the 
Skorokhod--Jakubowski theorem in 
\cite[Theorem A.1]{Punshon-Smith:2018aa}. 
We describe these maps 
in Theorem \ref{thm:skorokhod_N} and 
refer to them as {\em Dudley maps}.
The limit of these equations are then taken directly 
using the stochastic integral convergence 
lemma of Debussche--Glatt-Holtz--Temam 
\cite[Lemma 2.1]{Debussche:2011aa}.

After this, we prove pathwise 
uniqueness of strong solutions in Section 
\ref{sec:pathwise} using a stochastic 
Gronwall inequality at suitably chosen 
stopping times. Pathwise uniqueness 
is proven by comparing two {\em pairs} of equations. 
The result requires a delicate exploitation of the 
nonlinear structure in equations of 
\eqref{eq:vvw}, just as for deriving the energy inequality. 
Unlike the energy inequality, some of this 
structure is masked by the fact that 
there are two $R$-equations and two $S$-equations. 
New variational wave equation specific 
convergence estimates for the nonlinear 
composition $c(u)$ are needed and proven 
in both Sections \ref{sec:SJthm} and 
\ref{sec:pathwise}. We also require double 
commutator estimates beyond the standard 
DiPerna--Lions commutators rigorously to justify 
manipulations on the transport noise term. 

A quasi-Polish version of the 
Gy\"ongy--Krylov lemma \cite[Lemma 1.1]{GK1996}
will then imply that probabilistically 
strong solutions in fact exist. 

Finally, in Section \ref{sec:tempcont}, we 
show that the unique pathwise  
solutions $(R,S)$ take values not only in 
$\bk{L^\infty([0,T];L^2(\T))}^2$, but in fact a.s.~have  
continuous paths in $\bk{L^2(\T)}^2$, and lie in 
$\bk{L^{\overline{p}}(\Omega;C([0,T];L^2(\T)))}^2$ 
for any $\overline{p} < 2 p_0$. 
We show this improved inclusion by establishing 
that the mollified quantities 
$\{R *J_\delta\}_{\delta > 0}$ and 
$\{S*J_\delta\}_{\delta > 0}$, where 
$J_\delta$ is a spatial mollifier indexed 
by $\delta > 0$, are each Cauchy in 
$L^{\overline{p}}_\omega C_t L^2_x$. This proves the final claim 
of Theorem \ref{thm:main}.

\subsection{Definitions of solutions}
We use the following concepts of solutions 
to \eqref{eq:vvw} in subsequent sections. 

\begin{defin}[Martingale solutions] \label{def:mart_sol} 
Fix $p_0 > 2$. Let $ \Lambda$ be a probability 
measure with $2p_0$th moments on $\bk{L^2(\T)}^2$, 
supported on 
$\{(f,g) \in \bk{L^2(\T)}^2 : \int_\T f - g \,\d x = 0\}$, i.e., 
$\int_{\bk{L^2_x}^2} \norm{f}_{L^2}^{2p_0} + \norm{g}_{L^2}^{2p_0} \,\Lambda(\d f , \d g) < \infty$.
A quadruple $(R, S, E, W)$ 
is a weak martingale solution to \eqref{eq:vvw} 
with initial distribution $\Lambda$ if:
\begin{itemize}
\item[(i)] $E:=(\Omega, \mathcal{F},\{\mathcal{F}_t\}_{t \in [0,T]}, \mathbb{P})$ 
is a filtered probability space with a complete right-continuous filtration;

\item[(ii)] $W$ is a $\{\mathcal{F}_t\}_{t \in [0,T]}$-standard Brownian motion;

\item[(iii)] $(R, S)$ is adapted, and included in 
	the space 
	$\bk{L^{2p_0}(\Omega;L^\infty([0,T];L^2(\T)))}^2 \cap 
	 \bk{L^{2p_0}(\Omega;L^2([0,T];H^1(\T)))}^2$;

\item[(iv)] for any $\varphi \in C^1(\T)$, 
	the maps $t \mapsto \int_\T R \varphi \,\d x$ 
	and $t \mapsto \int_\T S \varphi \,\d x$ are 
	progressively measurable and $\mathbb{P}$-a.s.~continuous;

\item[(v)] the law of $(R^0, S^0) := (R(0), S(0))$ 
	on $\bk{L^2(\T)}^2$ is $\Lambda$;

\item[(vi)] For every $\varphi \in C^2(\T)$ 
	and every $t \in [0,T]$, 
	\eqref{eq:vvw} are a.s.~satisfied weakly, 
	with $u \in L^{2p_0}(C([0,T]\times \T))$ 
	given by \eqref{eq:constitutive_solve}.
	That is,
	\begin{align*}
	&\int \varphi \bk{R - R^0}\,\d x \\
	&= - \int_0^t \int_\T \pd_x \varphi\, c(u) R \,\d x\,\d t'
	- \int_0^t \int_{\T} \varphi\tilde{c}(u) \bk{R - S}^2\,\d x \,\d t'\\
	&\quad 	\,\,
	 -\nu \int_0^t \int_{\T} \pd_x\varphi  \,\pd_x R\,\d x \,\d t'
	 - \int_0^t \int_{\T} \pd_x\bk{\varphi \sigma }  \bk{R + S}\,\d x \,\d W\\
	&\quad  \,\,
	-  \int_0^t \int_{\T} \sigma \,
		\pd_x \bk{\varphi \sigma} \, \pd_x\bk{R + S}\,\d x \,\d t',
	\end{align*}
	and 
	\begin{align*}
	&\int \varphi \bk{S - S^0}\,\d x \\
	&= 
	\int_0^t \int_{\T} \pd_x \varphi\, c(u) S\,\d x\,\d t'
	 - \int_0^t \int_{\T} \varphi \tilde{c}(u) \bk{R - S}^2\,\d x \,\d t'\\
	&\quad  \,\,
		 -\nu\int_0^t \int_{\T} \pd_x \varphi \,\pd_x S\,\d x \,\d t'
	 - \int_0^t \int_{\T} \pd_x\bk{\varphi \sigma }  \bk{R + S}\,\d x \,\d W\\
	&\quad  \,\,
	-  \int_0^t \int_{\T} \sigma \,
		\pd_x \bk{\varphi \sigma} \, \pd_x\bk{R + S}\,\d x \,\d t'.
	\end{align*}

\end{itemize}
\end{defin}

\begin{defin}[Pathwise solutions]\label{def:str_sol}
Pathwise, or probabilistically strong, solutions $(R,S)$ 
to \eqref{eq:vvw} with initial conditions 
$(R^0, S^0) \in \bk{L^2(\T)}^2$ such that 
$\int_\T R^0 - S^0 \,\d x = 0$ a.s.~are martingale 
solutions with an initial distribution $\Lambda$ 
for which a fixed stochastic basis 
$((\Omega, \mathcal{F}, 
	\{\mathcal{F}_t\}_{t \in [0,T]}, \mathbb{P}), W)$ 
is given, the law of $(R^0, S^0)$ is $\Lambda$, 
and $(R,S,u)$ satisfy \eqref{eq:constitutive_solve} 
and the equations 
in (vi) of Definition \ref{def:mart_sol}.
\end{defin}

\section{Galerkin approximations}\label{sec:galerkin_compactness}

\subsection{The Galerkin scheme}\label{sec:scheme}
We now build our two-tiered Galerkin scheme. 

Let $\{e_1, e_2, \ldots\}$ be a complete 
orthonormal basis of $L^2(\T)$ contained 
in $H^3(\T)$. Let ${\bf P}_N$ be the $L^2(\T)$ 
orthogonal projection onto the subspace 
spanned by the first $2N - 1$ basis functions. 
By choosing $e_{2j} := \cos(2 \pi j x)$ and 
$e_{2j + 1} := \sin(2 \pi jx)$ to be the eigenfunctions 
of $\pd_x^2$ on the circle $\T$,  it can be checked
that ${\bf P}_N$ commutes with $\pd_x$ 
(e.g., \cite[Equation (4.2)]{HKP2023}). 
Let $R_N$ and $S_N$ be $\bk{2N - 1}$st 
order Galerkin approximants associated 
with $R$ and $S$, respectively. 

We seek to design a Galerkin scheme 
whose solution exists and is unique by 
general SDE well-posedness theorems 
(such as \cite[Theorem 2.9]{KS1998}, which we shall use). 
It is necessary then, to modify the 
equations \eqref{eq:vvw} not only by the 
projection ${\bf P}_N$, but also to iron out 
coefficients that fail to be Lipschitz in 
$R_N$ and $S_N$. There are two sources 
of non-Lipschitzness. The first is the 
coefficient $\tilde{c} = c/\bk{4c}$. Since $c$ is only 
once continuously differentiable by 
Assumption \ref{sum:c_sigma}, we cannot 
expect $\tilde{c}(u_N)$ to be Lipschitz in $(R_N, S_N)$ 
even if $u_N$ is so as a function of $(R_N,S_N)$.
A second reason that a direct projection of 
\eqref{eq:vvw} fails to be Lipschitz is the 
more obvious nonlinear factor $(R - S)^2$ there. 
We handle this by a truncation (indexed by $k$) 
which can be removed upon establishing 
{\em a priori} bounds (Lemma \ref{thm:galerkin_wp}). 

We first detail the way in which we handle 
the failure of $\tilde{c}$ to be Lipschitz.
Let $\{c_N\}$ be a sequence in $C^2(\R)$ such that 
\begin{align}\label{eq:c_N_approx}
c_N \to c \quad \text{in $C^1(\R)$}.
\end{align}
This implies that $c'_N/\bk{4c_N}=: \tilde{c}_N \to \tilde{c}$ 
in $C(\R)$.
We assume that $\kappa > 0$ in Assumption 
\ref{sum:c_sigma} has been chosen sufficiently 
large such that $c_N$ all take values in 
$[\kappa^{-1}, \kappa]$ and $\abs{c_N'}\le \kappa$.

Let $F$ be the anti-derivative of $c$ 
defined in \eqref{eq:F_defin}. We similarly  
define the anti-derivatives of $c_N$ to be 
\begin{align}\label{eq:F_N_defin}
F_N(u) := \int_0^u c_N(r)\,\d r.
\end{align}
Like $F$, $F_N$ is bi-Lipschitz (both $F_N$ 
and its inverse are Lipschitz), with Lipschitz 
constant for $F_N$ and $F_N^{-1}$ both 
bounded uniformly by $\kappa$.

In order to construct our approximations 
$u_N$, we use $F_N$ and the operator $\pd_x^{-1}$ 
alluded to in \eqref{eq:constitutive_solve} 
and described precisely below. 
For $f \in H^s(\T)$, we define 
the inverse operator $\pd_x^{-1}$ on $H^s(\T)$ 
following \cite[Equation (2.12)]{Hol2010}:
\begin{equation}\label{eq:dx_inverse}
\begin{aligned}
\bk{\pd_x^{-1} f}(x)
:= \int_0^x &\frac{f(y)}2 \,\d y 
	- x\int_\T \frac{f(y)}2 \,\d y\\
&	- \int_\T \bigg[\int_0^y \frac{f(z)}2 \,\d z
		- y \int_\T \frac{f(z)}2\,\d z\bigg]\,\d y.
\end{aligned}
\end{equation}
Let $H_0^s(\T)$ be the subspace 
of $H^s(\T)$ with zero spatial average. 
The operator $\pd_x^{-1}$ is both a 
left and a right inverse of $\pd_x$ on 
$H^s_0(\T)$, $s \in \R$, and is continuous  
$H^s_0(\T) \to H^{s + 1}_0(\T)$ \cite[Lemma 3]{Hol2010}.

We shall use the following convenient 
notation throughout the remainder of 
the paper. 
For $G,H \in C([0,T];L^2(\T)-w)$, set
\begin{equation}\label{eq:u_construct}
\begin{aligned}
\mathfrak{u}_N(G,H)(t,x) &:=  F_N^{-1} ( \pd_x^{-1} \frac{G - H}2),\\ 
\mathfrak{u}(G,H)(t,x) &:=  F^{-1} ( \pd_x^{-1} \frac{G - H}2).
\end{aligned}
\end{equation}
We discuss this construction in greater depth 
in Remark \ref{rem:u_construct_choice}.
Approximating $u$ by 
\begin{align}\label{eq:u_approx_defin}
u_N := \mathfrak{u}_N(R_N,S_N)
\end{align}
gives us 
\begin{align}\label{eq:RNSN_dFdx}
R_N - S_N = 2 \pd_x F_N(u_N) = 2c_N(u_N) \,\pd_x u_N,
\end{align}
which agrees with \eqref{eq:riemann_inv}.
As a consequence of the definition 
of $u_N$, we also have:
\begin{align}\label{eq:c_deriv}
\pd_x c_N(u_N) = c'_N(u_N) \pd_x u_N 
	= \frac{c'_N(u_N)}{2c_N(u_N)} \bk{R_N-S_N} 
	= 2\tilde{c}_N(u_N) \bk{R_N-S_N}.
\end{align}

Our Galerkin approximation is two-tiered 
in order to handle the non-Lipschitz nonlinearity. 
We first define a cut-off scheme where the $N$th 
order approximation has a cut-off indexed by $k$. 
For each fixed $N$, we shall first take $k \uparrow \infty$. 
The $N\uparrow \infty$ limit will be taken using 
Jakubowski's extension of Skorokhod's 
representation theorem in Section \ref{sec:SJthm}.
Therefore, for $k \in \N_{ \ge 1}$,  now 
let $Q_k: L^2(\T) \to L^2(\T)$ be the cut-off 
function (see, e.g., \cite[Equation (7)]{Flandoli:1995aa})
$$
Q_k(f) := \chi(\norm{f}_{L^2(\T)}) f^2, 
\quad \chi(r) := \begin{cases}
1 & \abs{r} \le k\\
0 & \abs{r} \ge k  + 1
\end{cases}, 
\quad \chi \in C^\infty(\R;[0,1]).
$$

Given $(R^0,S^0)$ with law $\Lambda$ on 
$\bk{L^2(\T)}^2$, we study the Galerkin system 
\begin{equation}\label{eq:galerkinQ}
\begin{aligned}
\d R_N &	 - \nu \pd_{xx}^2 R_N\,\d t\\
& =   {\bf P}_N\big[ c_N(u_N)  \pd_x R_N  \big]\,\d t
		 +   {\bf P}_N \big[\tilde{c}_N(u_N) 
			\bk{Q_k(R_N) - Q_k(S_N) }\big]\,\d t\\
&\quad\,\,+ {\bf P}_N \big[\sigma \pd_x \bk{R_N  + S_N}\big]\,\d W
		+ {\bf P}_N \big[\sigma \,
			\pd_x \bk{\sigma\,\pd_x \bk{R_N  + S_N}}\big] \,\d t,\\
\d S_N &	- \nu \pd_{xx}^2 S_N\,\d t\\
	&= -   {\bf P}_N\big[ c_N(u_N)  \pd_xS_N \big]\,\d t 
		 - {\bf P}_N \big[\tilde{c}_N(u_N) 
			\bk{Q_k(R_N) - Q_k(S_N)} \big]\,\d t\\
&\quad\,\,+ {\bf P}_N \big[\sigma \pd_x \bk{R_N  + S_N}\big]\,\d W
		+  {\bf P}_N \big[\sigma \,
			\pd_x \bk{\sigma\,\pd_x \bk{R_N  + S_N}}\big] \,\d t,
\end{aligned}
\end{equation}
appended with the intial conditions:
\begin{align}\label{eq:galerkin_init}
R_N(0) = R^0_{N}:= {\bf P}_N R^0,\qquad 
S_N(0) = S^0_{N} :=   {\bf P}_N S^0.
\end{align}
For any finite $k$, the system \eqref{eq:galerkinQ} 
is Lipschitz in $R_N$ and $S_N$, and by 
\cite[Theorem 2.9]{KS1998}, possesses
unique strong solutions $(R_{N,k}, S_{N,k})$. 
We seek to take the $k \uparrow \infty$ 
to establish well-posedness for the system 
\eqref{eq:galerkinQ} with $Q_k(v)$ 
replaced by $v^2$, we have the system 
(in equivalent divergence form using \eqref{eq:c_deriv}, since $N$ is finite):
\begin{equation}\label{eq:galerkin_lim}
\begin{aligned}
\d R_N &	 - \nu \pd_{xx}^2 R_N\,\d t\\
& =  \pd_x {\bf P}_N\big[ c_N(u_N)  R_N  \big]\,\d t
		 -   {\bf P}_N \big[\tilde{c}_N(u_N) 
			\bk{R_N - S_N}^2 \big]\,\d t\\
&\quad\,\,+ {\bf P}_N \big[\sigma \pd_x \bk{R_N  + S_N}\big]\,\d W
		+ {\bf P}_N \big[\sigma \,
			\pd_x \bk{\sigma\,\pd_x \bk{R_N  + S_N}}\big] \,\d t,\\
\d S_N &	- \nu \pd_{xx}^2 S_N\,\d t\\
	&= -  \pd_x {\bf P}_N\big[ c_N(u_N)  S_N \big]\,\d t 
		 - {\bf P}_N \big[\tilde{c}_N(u_N) 
			\bk{R_N - S_N}^2 \big]\,\d t\\
&\quad\,\,+ {\bf P}_N \big[\sigma \pd_x \bk{R_N  + S_N}\big]\,\d W
		+  {\bf P}_N \big[\sigma \,
			\pd_x \bk{\sigma\,\pd_x \bk{R_N  + S_N}}\big] \,\d t,
\end{aligned}
\end{equation}
with the same initial conditions 
\eqref{eq:galerkin_init}. Define the stopping time 
$$
\tau_{N,k} := \inf \{t > 0 :
	 \norm{R_{N,k}(t)}_{L^2(\T)}^2 + \norm{S_{N,k}(t)}_{L^2(\T)}^2 = k \}.
$$
For each fixed $N$, up to $\tau_{N,k}$,  
$(R_{N,k}, S_{N,k}, \tau_{N,k})$ is a 
local solution to \eqref{eq:galerkin_lim}. 
We now prove a uniform bound for 
local solutions: 
\begin{lem}\label{thm:galerkin_energy1}
Let $\tau$ be a stopping time and let 
$({R}_N,{S}_N)$ be a $\bk{H^3(\T)}^2$-valued 
continuous, adapted process that satisfies 
\eqref{eq:galerkin_lim} for every $t \le \tau$. Assume 
$\Ex \int_{\T} {R}_N^2(0) + {S}_N^2(0) \,\d x < \infty$. 
We have the uniform-in-$\tau$ bound:
\begin{align*}
\Ex \sup_{t \in [0,{T\wedge \tau}]}&
	 \int_{\T} {R}_N^2 + {S}_N^2 \,\d x \\
&+ \nu \Ex \int_0^{T \wedge \tau} \int_{\T} 
	\abs{ \pd_x {R}_N}^2 + \abs{\pd_x {S}_N}^2\,\d x \,\d t 
\lesssim_{{R}_N(0), {S}_N(0), \sigma, T} 1.
\end{align*}
\end{lem}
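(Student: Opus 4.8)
The plan is to apply It\^o's formula to the functional $Y(t):=\norm{R_N(t)}_{L^2(\T)}^2+\norm{S_N(t)}_{L^2(\T)}^2$, identify a structural cancellation inherited from the deterministic variational wave system \eqref{eq:main_det2}, absorb the bad higher-order noise contribution into the viscous dissipation, and then close with a stochastic Gronwall argument after localisation. Since $(R_N,S_N)$ is a continuous process taking values in the finite-dimensional space ${\bf P}_N L^2(\T)\subset (H^3(\T))^2$, I would first introduce the stopping times $\tau_m:=\inf\{t\ge 0:\norm{R_N(t)}_{L^2}^2+\norm{S_N(t)}_{L^2}^2\ge m\}$, which increase to $+\infty$ a.s.; it then suffices to prove the asserted bound over $[0,T\wedge\tau\wedge\tau_m]$ with constants independent of $m$ and let $m\to\infty$ by monotone convergence. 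On each stopped interval all coefficients in \eqref{eq:galerkin_lim} are bounded, so It\^o's formula is the classical finite-dimensional one and every stochastic integral below is a genuine martingale.

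Testing the $R_N$-equation of \eqref{eq:galerkin_lim} against $2R_N$ and the $S_N$-equation against $2S_N$ and adding, I would use that ${\bf P}_N$ is self-adjoint on $L^2$, commutes with $\pd_x$, and fixes $R_N,S_N$. The viscous terms then produce $-2\nu\bk{\norm{\pd_x R_N}_{L^2}^2+\norm{\pd_x S_N}_{L^2}^2}\,\d t$. For the transport and nonlinear drift terms, integration by parts together with \eqref{eq:c_deriv} rewrites the transport contribution of the $R_N$-equation as $2\int_\T \tilde c_N(u_N)(R_N-S_N)R_N^2\,\d x$; combined with the nonlinear contribution $-2\int_\T\tilde c_N(u_N)(R_N-S_N)^2R_N\,\d x$ this collapses to $2\int_\T\tilde c_N(u_N)R_N S_N(R_N-S_N)\,\d x$, and the analogous computation for the $S_N$-equation yields exactly the negative of this. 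Hence the transport and nonlinear terms \emph{cancel identically} --- the projected analogue of the energy conservation of \eqref{eq:main_det2} --- and in particular no $L^\infty_x$ control of $R_N,S_N$ is required at this stage.

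For the noise contributions, the It\^o correction from the two (identical) diffusion coefficients ${\bf P}_N[\sigma\pd_x(R_N+S_N)]$ is $2\norm{{\bf P}_N[\sigma\pd_x v]}_{L^2}^2\,\d t$ with $v:=R_N+S_N$, while the Stratonovich--It\^o drift correction $2{\bf P}_N[\sigma\pd_x(\sigma\pd_x v)]$ contributes, after one integration by parts, $2\ang{v,\sigma\pd_x(\sigma\pd_x v)}=-2\norm{\sigma\pd_x v}_{L^2}^2-2\int_\T v\,\sigma(\pd_x\sigma)\pd_x v\,\d x$. Because ${\bf P}_N$ is an $L^2$-contraction, $\norm{{\bf P}_N[\sigma\pd_x v]}_{L^2}^2-\norm{\sigma\pd_x v}_{L^2}^2\le 0$, so these two top-order terms annihilate one another and only $-2\int_\T v\,\sigma(\pd_x\sigma)\pd_x v\,\d x$ survives; by Young's inequality (using $\sigma\in W^{1,\infty}(\T)$, which is part of Assumption \ref{sum:c_sigma}) this is bounded by $\nu\bk{\norm{\pd_x R_N}_{L^2}^2+\norm{\pd_x S_N}_{L^2}^2}+C_{\sigma,\nu}\,Y$, while the martingale part is $-\bk{\int_\T(\pd_x\sigma)v^2\,\d x}\,\d W$, whose quadratic variation is $\le 4\norm{\pd_x\sigma}_{L^\infty}^2\int Y^2\,\d t$. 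Collecting these, and using $\norm{v}_{L^2}^2\le 2Y$, I obtain
\begin{equation*}
\d Y\le -\nu\bk{\norm{\pd_x R_N}_{L^2}^2+\norm{\pd_x S_N}_{L^2}^2}\,\d t+C_{\sigma,\nu}\,Y\,\d t-\bk{\int_\T(\pd_x\sigma)\,v^2\,\d x}\,\d W .
\end{equation*}

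Integrating on $[0,t\wedge\tau\wedge\tau_m]$ and taking expectations kills the martingale, and the deterministic Gronwall inequality gives $\Ex Y(t\wedge\tau\wedge\tau_m)\le \Ex Y(0)\,e^{C_{\sigma,\nu}t}$, whence $\nu\,\Ex\int_0^{T\wedge\tau\wedge\tau_m}\norm{\pd_x R_N}_{L^2}^2+\norm{\pd_x S_N}_{L^2}^2\,\d t\lesssim \Ex Y(0)\,e^{C_{\sigma,\nu}T}$. For the supremum, take $\sup_{[0,T\wedge\tau\wedge\tau_m]}$ before expectations and estimate the martingale supremum by Burkholder--Davis--Gundy, dominating it by $\tfrac12\,\Ex\sup_{[0,T\wedge\tau\wedge\tau_m]}Y+C\,\Ex\int_0^{T\wedge\tau\wedge\tau_m}Y\,\d t$; absorbing the first term on the left and invoking the bound on $\Ex Y$ already obtained yields $\Ex\sup_{[0,T\wedge\tau\wedge\tau_m]}Y\lesssim_{R_N(0),S_N(0),\sigma,T}1$ uniformly in $m$, and monotone convergence in $m$ gives both claimed bounds. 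I expect the crux to be the two structural points of the middle paragraphs --- the exact cubic cancellation of the conservative transport term against the nonlinearity, and the fact that the transport-noise It\^o correction, which on its own produces a $\norm{\sigma\pd_x v}_{L^2}^2$ term capable of defeating the viscous dissipation for small $\nu$, is neutralised by the Stratonovich-to-It\^o drift up to a term of order $\norm{\pd_x\sigma}_{L^\infty}\norm{v}_{L^2}\norm{\pd_x v}_{L^2}$, crucially because ${\bf P}_N$ is a contraction commuting with $\pd_x$. The remainder is the routine localise/Gronwall/BDG machinery.
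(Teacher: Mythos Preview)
Your proof is correct and follows the same architecture as the paper's: It\^o's formula on $\norm{R_N}_{L^2}^2+\norm{S_N}_{L^2}^2$, the exact cancellation of the transport and nonlinear drift terms via \eqref{eq:c_deriv}, cancellation of the top-order noise terms using that ${\bf P}_N$ is an $L^2$-contraction commuting with $\pd_x$, and BDG plus Gronwall to close.

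The one substantive difference is your treatment of the residual noise cross-term $-2\int_\T v\,\sigma(\pd_x\sigma)\,\pd_x v\,\d x$. You split it with Young's inequality and absorb $\nu\bk{\norm{\pd_x R_N}_{L^2}^2+\norm{\pd_x S_N}_{L^2}^2}$ into the dissipation, which produces a constant $C_{\sigma,\nu}$ (hence a final factor $e^{C_{\sigma,\nu}T}$) that blows up as $\nu\downarrow 0$. The paper instead integrates by parts once more to write this term as $\tfrac12\int_\T \pd_x(\sigma\,\pd_x\sigma)\,v^2\,\d x$, which uses the full $\sigma\in W^{2,\infty}(\T)$ hypothesis of Assumption~\ref{sum:c_sigma} and yields a $\nu$-\emph{independent} constant. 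Your route is exactly the alternative discussed in Remark~\ref{rem:sigma_reg}: it needs only $\sigma\in W^{1,\infty}(\T)$ but pays the price of an $e^{CT/\nu}$ factor. Since the lemma's implied constant is written $\lesssim_{R_N(0),S_N(0),\sigma,T}$ without $\nu$, and since the paper is setting up for a vanishing-viscosity limit, the paper's version is the sharper one; your argument as written proves a slightly weaker (but for fixed $\nu$ entirely adequate) statement.
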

\begin{rem}\label{rem:sigma_reg}
It is possible to introduce an exponential factor 
$e^{CT/\nu}$ to the right hand side of the bound 
in the statement, in exchange for the reduced 
regularity requirement $\sigma \in W^{1,\infty}(\T)$ 
(cf. Assumption \ref{sum:c_sigma}). This is rougher 
than the standard assumption on $\sigma$. To do this, 
one simply keeps one derivative on $\bk{R + S}^2$ 
in \eqref{eq:st_energy_345}, and applies the Cauchy--Schwarz 
inequality to produce 
$\nu\norm{\pd_x \bk{R + S}}_{L^2_{t,x}}^2$, 
which can then be absorbed 
into the dissipation. We refrain from this here 
as we aim to study the inviscid limit in a forthcoming work.
\end{rem}

\begin{proof}

Since ${\bf P}_N R_N = R_N$, and the projection 
both is self-adjoint and commutes with the 
spatial derivative, we have upon integration-by-parts that:
$$
\nu \int_{\T} R_N  \,\pd_{xx}^2 R_N\,\d x
= -\nu \int_{\T} \abs{\pd_x R_N}^2\,\d x,
$$
and similarly for $S_N$ in place of $R_N$. 

We drop the subscripts on $u_N$, ${R}_N$, and ${S}_N$ 
in the remainder of the proof. 
Multiplying the first equation of \eqref{eq:galerkin_lim} by 
$R$ and the second equation by $S$, 
adding the two equations up and integrating 
in $x$, we have (by It\^o's formula)
\begin{equation}\label{eq:R_N2S_N2}
\begin{aligned}
\frac12\d \int_{\T} &\bk{R^2 + S^2} \,\d x \\
&+ \nu \int_{\T}{\abs{ \pd_x R}^2 + \abs{\pd_x S}^2}\,\d x \,\d t 
 =\sum_{j = 1}^5 I_i \,\d t  + I_6 \,\d W, 
\end{aligned}
\end{equation}
where 
\begin{align*}
I_1&:= \frac12 \int_\T c_N(u)\,\pd_x\bk{R^2 - S^2} \,\d x,\quad
I_2 := \int_{\T} \tilde{c}_N(u) \bk{R - S}\bk{R^2 - S^2}\,\d x, \\
I_3 &:=  \int_{\T} \abs{{\bf P}_N\big[\sigma\pd_x  \bk{R + S}\big]}^2\,\d x,\quad\,\,\,
I_4 := \int_{\T} R \sigma \,\pd_x\bk{\sigma \,\pd_x \bk{R + S}}\,\d x, \\
I_5 &:= \int_{\T} S \sigma \,\pd_x\bk{\sigma \,\pd_x  \bk{R + S}}\,\d x, \quad\,\,\,
I_6 :=  \frac12\int_\T \sigma \pd_x\bk{R + S}^2\,\d x,
\end{align*}
and $u$ is defined from $R$ and $S$ as 
in \eqref{eq:u_approx_defin}. 

Integrating by parts and using 
\eqref{eq:c_deriv}, $I_1 + I_2 = 0$.
Integrating by parts in $I_4$ and in $I_5$, 
and appealing to Bessel's inequality,
\begin{equation}\label{eq:st_energy_345}
\begin{aligned}
&I_3 + I_4 + I_5\\
& =  I_3 -  \int_{\T}\sigma^2
	 \abs{\pd_x R + \pd_x S}^2\,\d x 
 	+ \frac12\int_{\T}\pd_x 
	\bk{ \sigma \,\pd_x \sigma} \bk{R + S}^2\,\d x \\
&\le \frac12\norm{\sigma^2}_{W^{2,\infty}_x} 
	\bk{\norm{R}_{L^2_x}^2 + \norm{S}_{L^2_x}^2}.
\end{aligned}
\end{equation}

By the Burkholder--Davis--Gundy (BDG) inequality and Young's inequality,
\begin{align*}
\Ex\abs{ \int_0^{T\wedge \tau} I_6 \,\d W} 
&\le 2\norm{\pd_x \sigma}_{L^\infty_x} 
	\Ex \bk{\int_0^T \bk{\norm{R}_{L^2_x}^2 	
		+ \norm{S}_{L^2_x}^2}^2\,\d s }^{1/2}\\
&\le C_\sigma 
	\Ex \bk{\int_0^{T\wedge \tau} \bk{\norm{R}_{L^2_x}^2 	
		+ \norm{S}_{L^2_x}^2}\,\d s } \\
&\qquad + \frac14 \Ex \sup_{t \le {T\wedge \tau}}  \bk{\norm{R(t)}_{L^2_x}^2 	
		+ \norm{S(t)}_{L^2_x}^2}.
\end{align*}
The final term is absorbed into the left side of the inequality. 
Therefore,
\begin{align*}
\frac14\Ex \sup_{t \le {T\wedge \tau}} & \bk{\norm{R(t)}_{L^2_x}^2 	
		+ \norm{S(t)}_{L^2_x}^2}\\
&	\le  \Ex \bk{\norm{R(0)}_{L^2_x}^2 	
		+ \norm{S(0)}_{L^2_x}^2} \\
&\qquad		+ C_\sigma \Ex  \int_0^T \one{t \le {T\wedge \tau}}
	\bk{\norm{R}_{L^2_x}^2 + \norm{S}_{L^2_x}^2}\,\d t.
\end{align*}
Gronwall's inequality establishes the lemma.

\end{proof}

Since $({R}_N(0), {S}_N(0))$ have 
$2p_0$th moments in $L^2(\T)$, by It\^o's 
formula, we can derive from \eqref{eq:R_N2S_N2}
the following higher moment bound 
(such as \cite[Lemma 4.2]{HKP2023}):
\begin{align*}
\Ex \sup_{t \in [0,{T\wedge \tau}]}&
	\bk{ \int_{\T} {R}_N^2 + {S}_N^2 \,\d x}^{p_0} \\
&+ \nu^{p_0} \Ex \bk{ \int_0^{T \wedge \tau} \int_{\T} 
	\abs{ \pd_x {R}_N }^2+ \abs{\pd_x {S}_N}^2\,\d x \,\d t}^{p_0} 
\lesssim_{{R}_N(0), {S}_N(0), \sigma, T} 1.
\end{align*}

We now show that for each $N$,  
$\tau_{N,k}$ tends a.s.~to $T$ as $k \uparrow \infty$. 
A similar argument can be found in \cite[page 74]{Fla2008}.
\begin{prop}[Well-posedness of the 
	$k = \infty$ Galerkin scheme]\label{thm:galerkin_wp}
For each fixed $N$, let $(R_N, S_N)$ 
be the uniquely defined process which is equal 
to $(R_{N,k}, S_{N,k})$ on $[0, \tau_{N,k})$ 
for every $k$. Then $(R_N, S_N)$  is the unique 
strong solution to \eqref{eq:galerkin_lim} on $[0,T]$.

Moreover, 
\begin{equation}\label{eq:galerkin_L2}
\begin{aligned}
\Ex \sup_{t \in [0,T]}&\bk{
	 \int_{\T} R_N^2 + S_N^2 \,\d x}^{p_0}\\
&+ \nu^{p_0} \Ex \bk{\int_0^T \int_{\T}\abs{ \pd_x R_N }^2
			+ \abs{\pd_x S_N}^2\,\d x \,\d t}^{p_0} 
\lesssim_{R^0_N, S^0_N, \sigma, T} 1.
\end{aligned}
\end{equation}
\end{prop}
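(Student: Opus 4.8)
The plan is to run the standard localisation-by-stopping-times argument: establish that $\tau_{N,k} \uparrow T$ almost surely as $k \uparrow \infty$, then glue the local solutions $(R_{N,k}, S_{N,k})$ into a single global solution. First I would note that $\tau_{N,k}$ is non-decreasing in $k$ (the thresholds $k$ increase and the cut-off $Q_k$ only matters once $\|R_{N,k}\|_{L^2}^2 + \|S_{N,k}\|_{L^2}^2$ exceeds $k$), so the limit $\tau_N := \lim_{k\to\infty}\tau_{N,k}$ exists almost surely and in $[0,T]$. On the event $\{t < \tau_{N,k}\}$ the cut-off is inactive and $(R_{N,k},S_{N,k})$ solves \eqref{eq:galerkin_lim} exactly; by uniqueness for the Lipschitz cut-off system, the processes are consistent for different $k$, so $(R_N,S_N)$ defined to equal $(R_{N,k},S_{N,k})$ on $[0,\tau_{N,k})$ is well-defined on $[0,\tau_N)$.

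The core step is to show $\mathbb{P}(\tau_N < T) = 0$. Here I would apply Lemma \ref{thm:galerkin_energy1} with $\tau = \tau_{N,k}$: since $(R_{N,k}, S_{N,k})$ satisfies \eqref{eq:galerkin_lim} for every $t \le \tau_{N,k}$ (the cut-off being inactive strictly before the threshold is reached, and by continuity up to it), the lemma gives
\begin{equation*}
\Ex \sup_{t \in [0,T\wedge\tau_{N,k}]} \int_\T R_{N,k}^2 + S_{N,k}^2\,\d x \lesssim_{R^0_N,S^0_N,\sigma,T} 1
\end{equation*}
with a bound \emph{independent of $k$}. On the event $\{\tau_{N,k} < T\}$ we have $\|R_{N,k}(\tau_{N,k})\|_{L^2}^2 + \|S_{N,k}(\tau_{N,k})\|_{L^2}^2 = k$ by definition of the stopping time and continuity of paths, hence $\sup_{t\le T\wedge\tau_{N,k}}(\|R_{N,k}\|_{L^2}^2 + \|S_{N,k}\|_{L^2}^2) \ge k \,\one{\tau_{N,k} < T}$. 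Taking expectations and using Chebyshev, $\mathbb{P}(\tau_{N,k} < T) \le C/k \to 0$; since $\{\tau_N < T\} = \bigcap_k \{\tau_{N,k} < T\}$ (monotonicity), we conclude $\mathbb{P}(\tau_N < T) = 0$, so $\tau_N = T$ a.s. and $(R_N,S_N)$ is a global solution of \eqref{eq:galerkin_lim} on $[0,T]$. Uniqueness on $[0,T]$ follows from uniqueness of each $(R_{N,k},S_{N,k})$ together with the observation that any solution must coincide with $(R_{N,k},S_{N,k})$ up to $\tau_{N,k}$ for every $k$.

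For the moment bound \eqref{eq:galerkin_L2}, I would not re-derive it from scratch but invoke the higher-moment estimate already recorded after the proof of Lemma \ref{thm:galerkin_energy1} (the It\^o's-formula argument on \eqref{eq:R_N2S_N2}, as in \cite[Lemma 4.2]{HKP2023}), applied with the stopping time $\tau = \tau_{N,k}$; this yields a bound uniform in $k$, and one passes $k \uparrow \infty$ using $\tau_{N,k} \uparrow T$ together with monotone convergence (for the $\sup$ term, $\sup_{t\le T\wedge\tau_{N,k}}$ increases to $\sup_{t\le T}$ since the path of $(R_N,S_N)$ is the consistent extension; for the dissipation integral, likewise). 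The only mild subtlety — and the step I expect to need the most care — is the justification that $(R_{N,k},S_{N,k})$ genuinely satisfies \eqref{eq:galerkin_lim} (not merely \eqref{eq:galerkinQ}) for all $t \le \tau_{N,k}$ including the endpoint: on $[0,\tau_{N,k})$ the $L^2$ norm is $< k$ so $\chi(\|\cdot\|_{L^2}) = 1$ and $Q_k(v) = v^2$, and at $t = \tau_{N,k}$ one uses path-continuity in $L^2$ (which holds since $(R_{N,k},S_{N,k})$ is $(H^3)^2$-valued continuous) to pass to the limit in the weak formulation. Everything else is routine localisation bookkeeping.
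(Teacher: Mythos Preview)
Your proposal is correct and follows essentially the same route as the paper: invoke the uniform-in-$\tau$ energy bound of Lemma~\ref{thm:galerkin_energy1} at the stopping time $\tau_{N,k}$, use Chebyshev/Markov to conclude $\mathbb{P}(\tau_{N,k} < T) \lesssim k^{-1}$, and then inherit the higher-moment bound from the estimate recorded after that lemma. One tiny imprecision: the set identity $\{\tau_N < T\} = \bigcap_k \{\tau_{N,k} < T\}$ need not hold (the limit could equal $T$ with all $\tau_{N,k} < T$), but only the inclusion $\subseteq$ is needed and that is what you actually use.
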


\begin{proof}
From Lemma \ref{thm:galerkin_energy1}, 
$$
\Ex \sup_{t \in [0,T]} \bk{
	\norm{R_{N,k}(t \wedge \tau_{N,k})}_{L^2_x}^2
	+ \norm{S_{N,k}(t \wedge \tau_{N,k})}_{L^2_x}^2}
\lesssim_{N,\sigma, T} 1. 
$$
Specifically, 
\begin{align*}
&k \mathbb{P}(\{\tau_{N,k} \le T\}) \\
&	= \Ex \bk{\one{\tau_{N,k} \le T}\bk{
	\norm{R_{N,k}(t \wedge \tau_{N,k})}_{L^2_x}^2
	+ \norm{S_{N,k}(t \wedge \tau_{N,k})}_{L^2_x}^2}}
	\lesssim_{N,\sigma, T} 1.
\end{align*}

Let $\tau_{N, \infty}$ be the time of 
existence of $(R_N, S_N)$. By definition 
of $R_N, S_N$, $\tau_{N,\infty} \ge \tau_{N,k}$ 
for any $k \in \mathbb{N}$. Therefore
$$
\mathbb{P}(\{\tau_{N,\infty} < T\}) 
	\le \mathbb{P}(\{\tau_{N,k} < T\}) 	
	\lesssim k^{-1} \to 0.
$$
The $L^{2p_0}(\Omega;L^\infty([0,T];L^2(\T))) 	
\cap L^{2p_0}(\Omega;L^2( [0,T];H^1(\T)))$ bound 
is inherited directly from Lemma 
\ref{thm:galerkin_energy1}.
\end{proof}

\subsection{{\em A priori} estimates and tightness}\label{sec:apriori_tightness}
Our present concern is the limit $N\uparrow \infty$ 
in $(R_N,S_N)$ to $(R, S)$. 
We first prove that the laws of 
$R_N$ and $S_N$ are tight in $L^2([0,T] \times \T)$. 
We prove further 
estimates that allow us to enforce 
the convergence in law in $C([0,T];L^2(\T)-w)$. 
Tightness of laws for 
$u_N$ in $C([0,T] \times \T)$ is proven in Section \ref{sec:u_N_tightness}.
%
We shall use the Aubin--Lions lemma repeatedly:
\begin{lem}[Aubin--Lions lemma 
	{\cite[Theorem 5, Corollary 4]{Simon:1987vn}}]\label{thm:AL}
Let $B_0 \doublehookrightarrow B 
	\hookrightarrow B_1$ be a sequence of 
Banach spaces for which $B_0$ and $B_1$ are 
reflexive. For $p \in (1,\infty)$ and $\alpha \in (0,1]$, 
\begin{align*}
L^p([0,T];B_0) \cap W^{\alpha,p}([0,T];B_1) 
	\doublehookrightarrow L^p([0,T];B).
\end{align*}
For $p = \infty$, $\alpha = 1$, and $r > 1$, 
\begin{align*}
L^\infty([0,T];B_0) \cap \dot{W}^{1,r}([0,T];B_1) 
	\doublehookrightarrow C([0,T];B).
\end{align*}
\end{lem}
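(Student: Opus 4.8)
The plan is to reduce the statement to two classical ingredients: Ehrling's interpolation inequality and a time-mollification/approximation-by-compact-sets argument. First I would establish Ehrling's lemma: since $B_0 \doublehookrightarrow B \hookrightarrow B_1$, for every $\eta>0$ there is $C_\eta$ with $\|v\|_B \le \eta\|v\|_{B_0} + C_\eta\|v\|_{B_1}$ for all $v \in B_0$. This is standard by contradiction: if it failed one would get $v_n$ with $\|v_n\|_B=1$, $\|v_n\|_{B_0}\le 1/\eta$, $\|v_n\|_{B_1}\le 1/n$; compactness of $B_0 \doublehookrightarrow B$ extracts a subsequence converging in $B$ to a limit of $B$-norm $1$, which must be $0$ since $v_n\to0$ in $B_1$, a contradiction.

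For the case $p\in(1,\infty)$, let $\mathcal F$ be bounded in $L^p([0,T];B_0)\cap W^{\alpha,p}([0,T];B_1)$; the goal is to show $\mathcal F$ is totally bounded in $L^p([0,T];B)$. Extend every $f\in\mathcal F$ to $\R$ (e.g. by reflection at the endpoints), increasing the relevant norms by at most a fixed factor, and set $f_\delta := f*\rho_\delta$ for a standard time mollifier $\rho_\delta$. Two estimates suffice. First, applying Ehrling pointwise in $t$ and Minkowski's inequality, $\|f-f_\delta\|_{L^p([0,T];B)} \le 2\eta\|f\|_{L^p(\R;B_0)} + C_\eta\sup_{|s|\le\delta}\|f(\cdot+s)-f\|_{L^p(\R;B_1)}$; the first term is $\le\varepsilon/2$ for $\eta$ small by the uniform $B_0$-bound, and since $W^{\alpha,p}$ controls the $L^p$-modulus of continuity in time, $\sup_{|s|\le\delta}\|f(\cdot+s)-f\|_{L^p(\R;B_1)}\le C\delta^\alpha\|f\|_{W^{\alpha,p}(\R;B_1)}\le\varepsilon/(2C_\eta)$ for $\delta$ small, uniformly over $\mathcal F$. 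Second, for fixed $\delta$ the family $\{f_\delta : f\in\mathcal F\}$ is bounded in $C([0,T];B_0)$ and equi-Lipschitz into $B_0$ (differentiate under the mollifier, then use H\"older), hence equicontinuous into $B$ and pointwise relatively compact in $B$ (bounded subsets of $B_0$ are precompact in $B$), so Arzel\`a--Ascoli gives relative compactness in $C([0,T];B)\hookrightarrow L^p([0,T];B)$. Since $\mathcal F$ lies within $\varepsilon$ of a relatively compact set for every $\varepsilon>0$, it is totally bounded, hence relatively compact, in $L^p([0,T];B)$ — which is precisely the asserted compact embedding.

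For the endpoint case $p=\infty$, $\alpha=1$, $r>1$: here $f'\in L^r([0,T];B_1)$ yields $f\in C^{0,1-1/r}([0,T];B_1)$ with $\|f(t)-f(s)\|_{B_1}\le|t-s|^{1-1/r}\|f'\|_{L^r([0,T];B_1)}$, and the $L^\infty([0,T];B_0)$ bound together with reflexivity of $B_0$ and weak lower semicontinuity of the norm forces $f(t)\in B_0$ with $\|f(t)\|_{B_0}\le\|f\|_{L^\infty([0,T];B_0)}$ for \emph{every} $t$. Ehrling then provides a uniform modulus of continuity into $B$, namely $\|f(t)-f(s)\|_B\le 2\eta\|f\|_{L^\infty([0,T];B_0)}+C_\eta|t-s|^{1-1/r}\|f'\|_{L^r([0,T];B_1)}$, so $\mathcal F\subset C([0,T];B)$ and is equicontinuous there, while pointwise relative compactness in $B$ again follows from the uniform $B_0$-bound; Arzel\`a--Ascoli gives relative compactness in $C([0,T];B)$. (Alternatively, the same mollification argument as above applies verbatim, the error estimate now holding uniformly in $t$.)

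The technical heart, and main obstacle, is making the mollification error uniformly small over $\mathcal F$ — this rests entirely on the fact that bounded families in $W^{\alpha,p}$ (resp. $\dot W^{1,r}$) possess a \emph{uniform} $L^p$- (resp. uniform) modulus of continuity in time — while simultaneously capturing the mollified family with a single compactness statement via Arzel\`a--Ascoli. The $p=\infty$ case carries the extra subtlety that the $L^\infty([0,T];B_0)$ bound must be shown to hold at every time $t$, which is exactly where reflexivity of $B_0$ is used.
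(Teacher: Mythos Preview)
The paper does not supply its own proof of this lemma; it is quoted verbatim as a known result from Simon~\cite{Simon:1987vn} and used as a black box. Your argument is correct and is in fact the standard route to the result: Ehrling's interpolation inequality combined with a uniform time-modulus estimate (coming from the $W^{\alpha,p}$ or $\dot W^{1,r}$ bound) to control the mollification error, followed by Arzel\`a--Ascoli on the mollified family. This is essentially how Simon proceeds, so there is nothing to compare against the paper itself.

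One small point worth being explicit about if you flesh this out: the translation estimate $\|f(\cdot+s)-f\|_{L^p(\R;B_1)}\le C|s|^\alpha\|f\|_{W^{\alpha,p}}$ is genuinely nontrivial for fractional $\alpha\in(0,1)$ (it does not follow immediately from the Gagliardo seminorm and requires a short argument or a reference); and your observation that reflexivity of $B_0$ is what upgrades the a.e.\ $L^\infty_t B_0$ bound to an everywhere bound is exactly the right justification for that hypothesis.
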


Recall that tightness of laws for a sequence 
of variables  such as $\{R_N\}$ in a 
topological space $\mathcal{X}_1$ 
means that for every $\ep > 0$, there exists 
a compact $K_\ep \subset \mathcal{X}_1$ 
such that $\mathbb{P}(\{R_N \in K_\ep^c\}) \le \ep$, 
uniformly in $N$.

The first assertion of 
Lemma \ref{thm:AL} identifies 
compact sets that give us tightness of 
laws for $\{R_N\}_{N\ge 1}$, $\{S_N\}_{N\ge 1}$ 
in $L^p([0,T];B)$.
The compact sets are balls of finite radii 
in each of the spaces whose intersection 
embed compactly into $L^p([0,T];B)$.
Applying Lemma \ref{thm:AL} then requires 
two uniform bounds in $L^p([0,T];B_0)$ 
and in $W^{\alpha,p}([0,T];B_1)$ for suitably 
chosen indices and space $B_0$, $B_1$, 
and $B$, up to 
some moment in the probability variable.

Lemma \ref{thm:galerkin_energy1} provided 
one uniform bound in $L^p([0,T];B_0)$ with 
$p = 2$, $B_0 = H^1(\T) 
	\doublehookrightarrow B = L^2(\T)$. 
The second required uniform bound in 
$H^\alpha([0,T];B_1 = H^{-3}(\T))$ is available 
via a lemma we establish presently.
\begin{lem}\label{thm:galerkinCH-3}
Let $(R_N,S_N)$ be the unique strong 
solution to \eqref{eq:galerkin_lim}  with initial 
condition $(R^0_N, S^0_N)$. 
For any $\gamma < \bk{p_0/2 - 1}/p_0$, 
we have the uniform-in-$N$ bound:
\begin{align*}
\Ex \norm{R_N}_{C^\gamma([0,T];H^{-3}(\T))}^{p_0} 
+ \Ex \norm{S_N}_{C^\gamma([0,T];H^{-3}(\T))}^{p_0}
\lesssim 1
\end{align*}
This bound is also uniform in $\nu$. 
\end{lem}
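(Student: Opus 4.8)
The plan is to estimate the increments $R_N(t) - R_N(s)$ (and likewise for $S_N$) in the weak space $H^{-3}(\T)$ by integrating the equation \eqref{eq:galerkin_lim} over $[s,t]$ and bounding each of the five drift terms and the stochastic term separately, using the already-established energy bound \eqref{eq:galerkin_L2}. First I would test \eqref{eq:galerkin_lim} against $\varphi \in H^3(\T)$ and use that ${\bf P}_N$ is self-adjoint and bounded on $L^2$, together with the pairing $H^{-3}$--$H^3$: the conservative-form term $\pd_x {\bf P}_N[c_N(u_N) R_N]$ pairs with $\pd_x \varphi$, and since $c_N$ is bounded by $\kappa$ and $R_N$ is controlled in $L^2_x$, its $H^{-3}_x$-norm is $\lesssim \kappa \norm{R_N}_{L^2_x}$; integrating over $[s,t]$ gives a bound $\lesssim |t-s|\sup_\tau \norm{R_N(\tau)}_{L^2_x}$. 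The nonlinear term ${\bf P}_N[\tilde c_N(u_N)(R_N-S_N)^2]$ is only $L^1_x$-bounded, but $L^1(\T) \hookrightarrow H^{-3}(\T)$ (indeed $H^{-1}$) so its $H^{-3}_x$-norm is $\lesssim \kappa^2(\norm{R_N}_{L^2_x}^2 + \norm{S_N}_{L^2_x}^2)$, again integrable in time against $|t-s|$. The viscous term $\nu \pd_{xx}^2 R_N$ pairs with $\pd_{xx}^2\varphi$ and contributes $\lesssim \nu\norm{R_N}_{L^2_x}$ — note $\nu \le $ some fixed constant so this is uniform in $\nu$; alternatively one writes $\nu\int_s^t \pd_{xx}^2 R_N$ and uses the $\nu$-weighted $L^2_tH^1_x$ bound to get a $|t-s|^{1/2}$ Hölder gain. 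The two transport-noise drift terms $\sigma\pd_x(\sigma\pd_x(R_N+S_N))$ land in $H^{-2}_x$ after moving two derivatives onto the test function (using $\sigma \in W^{2,\infty}$), bounded by $\norm{\sigma}_{W^{2,\infty}}^2(\norm{R_N}_{L^2_x}+\norm{S_N}_{L^2_x})$.

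For the stochastic term $\int_s^t {\bf P}_N[\sigma\pd_x(R_N+S_N)]\,\d W$ I would move the derivative onto the test function so that the integrand is $\pd_x(\varphi\sigma)(R_N+S_N)$, bounded in $H^{-1}_x \hookrightarrow H^{-3}_x$ by $\norm{\sigma}_{W^{1,\infty}}(\norm{R_N}_{L^2_x}+\norm{S_N}_{L^2_x})$; then the Burkholder--Davis--Gundy inequality in the Hilbert space $H^{-3}(\T)$ gives, for any $q \ge 2$,
\begin{align*}
\Ex\norm{\textstyle\int_s^t {\bf P}_N[\sigma\pd_x(R_N+S_N)]\,\d W}_{H^{-3}_x}^{q}
\lesssim \Ex\Bigl(\int_s^t \norm{R_N+S_N}_{L^2_x}^2\,\d r\Bigr)^{q/2}
\lesssim |t-s|^{q/2},
\end{align*}
using \eqref{eq:galerkin_L2} with $q = p_0$ (and $p_0 > 2$). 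This yields the fractional-in-time Sobolev bound: the map $t \mapsto \int_s^t(\cdots)\,\d W$ lies in $W^{\beta,p_0}([0,T];H^{-3}_x)$ for any $\beta < 1/2$ by the standard Kolmogorov-type / Flandoli--Gatarek criterion, while the Lebesgue-integral drift terms lie in $W^{1,p_0}([0,T];H^{-3}_x) \subset W^{\beta,p_0}$. Combining, $R_N \in W^{\beta,p_0}([0,T];H^{-3}_x)$ uniformly in $N$ and $\nu$ for any $\beta < 1/2$; choosing $\beta$ with $\beta - 1/p_0 > \gamma$ (possible since $\gamma < (p_0/2-1)/p_0 = 1/2 - 1/p_0$) and invoking the Sobolev embedding $W^{\beta,p_0}([0,T];H^{-3}_x) \hookrightarrow C^{\beta - 1/p_0}([0,T];H^{-3}_x)$ (Morrey in the time variable) gives the claimed $C^\gamma$ bound after taking expectations.

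The main obstacle is purely bookkeeping around the nonlinear term and the stochastic term: one must be careful that the only $x$-regularity used is what \eqref{eq:galerkin_L2} supplies, namely $L^\infty_t L^2_x$ (and $\nu$-weighted $L^2_t H^1_x$, which I would actually \emph{avoid} using so the bound stays uniform as $\nu \downarrow 0$), and that three spatial derivatives of slack in $H^{-3}$ are genuinely enough to absorb: (a) the extra derivative in the conservative drift $\pd_x[c_N(u_N)R_N]$, (b) the two derivatives in the It\^o-correction drift, and (c) the loss incurred by $(R_N - S_N)^2$ being only $L^1_x$. A quick count shows $H^{-3}$ is comfortably sufficient (one could get by with $H^{-2}$), so no sharpness issue arises; the only real care needed is that the BDG constant and all the embedding constants are independent of $N$ (clear, since ${\bf P}_N$ is a contraction on $L^2_x$ and commutes with $\pd_x$) and of $\nu$ (clear, once we decline to use the dissipation term and simply bound $\nu \le \nu_0$). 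I would present the drift estimates in one displayed chain and the stochastic estimate in a second, then quote the fractional Sobolev embedding in time to conclude.
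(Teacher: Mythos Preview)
Your proposal is correct and matches the paper's approach: both integrate \eqref{eq:galerkin_lim} over $[s,t]$, bound each of the five resulting terms in $H^{-3}_x$ by duality with $\varphi \in H^3(\T)$, apply the BDG inequality for the stochastic integral, and conclude by a time-regularity criterion (the paper invokes Kolmogorov's continuity theorem directly on the increment estimate $\Ex\norm{R_N(t)-R_N(s)}_{H^{-3}}^p \lesssim |t-s|^{p/2}$; your equivalent route through $W^{\beta,p_0}_t \hookrightarrow C^{\beta-1/p_0}_t$ yields the same H\"older exponent). Your handling of the It\^o-correction drift $\sigma\pd_x(\sigma\pd_x(R_N+S_N))$---shifting \emph{both} derivatives onto the test function via $\sigma \in W^{2,\infty}$ rather than integrating by parts once and invoking the $L^2_tH^1_x$ bound on $\pd_x(R_N+S_N)$---is in fact slightly cleaner than the paper's displayed calculation for the stated uniformity in $\nu$.
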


\begin{proof}
The $H^{-3}(\T)$ norm is defined by
$$
\norm{f}_{H^{-3}(\T)} 
	:= \sup_{\norm{\varphi}_{H^3(\T)}\le 1}
		 \int \varphi f \,\d x .
$$

Consider first the equation for $R_N$. 
We integrate \eqref{eq:galerkin_lim} from $s$ to $t$, 
$s \le t$.
\begin{align*}
R_N(t) - R_N(s) = \sum_{j = 1}^5 I_j, 
\end{align*}
where
\begin{align*}
I_1 &:=  \int_s^t  \pd_x {\bf P}_N\big[ c_N(u_N) R_N  \big]\,\d r, \qquad \qquad\qquad
I_2 :=  \nu \int_s^t   \pd_{xx}^2 R_N\,\d r,\\
I_3 &:=  - \int_s^t  {\bf P}_N \big[\tilde{c}_N(u_N) 
			\bk{R_N - S_N}^2 \big]\,\d r,\\
I_4 &:=  \int_s^t {\bf P}_N \big[\sigma \,
			\pd_x \bk{\sigma\,\pd_x  \bk{R_N + S_N}}\big] \,\d r,\qquad
I_5 :=  \int_s^t  {\bf P}_N \big[\sigma \pd_x  \bk{R_N + S_N}\big]\,\d W.
\end{align*}

For any $\varphi \in H^3(\T)$ with 
$\norm{\varphi}_{H^3(\T)}\le 1$, Bessel's 
inequality implies $\norm{{\bf P}_N\varphi}_{H^3(\T)} \le 1$ also. 
By Sobolev embedding the first and 
second derivatives of ${\bf P}_N \varphi$ 
are bounded by $1$ in $L^\infty(\T)$. 

By the smoothness and uniform positivity of $c$, 
$\tilde{c} = c'/4c$ is a bounded function. Using 
\eqref{eq:c_deriv}, we can then estimate:
\begin{align*}
\int_{\T} \varphi I_1 \,\d x
	&= \int_s^t \int_{\T } \pd_x {\bf P}_N\varphi\,  c_N(u_N)  R_N\,\d x \,\d r\\
	&= \abs{t - s}\norm{c_N(u_N) \pd_x {\bf P}_N \varphi}_{L^\infty_{t,x}} 
	\norm{R_N}_{L^\infty_t L^1_x}.
\end{align*}
We now take the supremum over $\varphi$ 
in the unit ball of $H^3(\T)$, integrating 
in the probability variable and 
using the $L^2$ bound \eqref{eq:galerkin_L2}. 
With $p \le p_0$, we then have:
\begin{align}\label{eq:Calpha_interim1}
\Ex \norm{I_1}_{H^{-3}(\T)}^p \lesssim \abs{t - s}^p, 
\quad \text{uniformly in $N$}.
\end{align}

For $I_3$, we have
\begin{align*}
\int_\T \varphi I_3\,\d x 
&= -\int_s^t \int {\bf P}_N \varphi \tilde{c}_N(u) \bk{R_N - S_N}^2\,\d x\,\d r\\
& \le \kappa \abs{t - s} 
	\norm{{\bf P}_N \varphi\,\tilde{c}_N(u)}_{L^\infty_{t,x}} 
	\norm{R_N - S_N}_{L^\infty_tL^2_x}.
\end{align*}
And hence,
\begin{align}\label{eq:Calpha_interim3}
\Ex \norm{I_3}_{H^{-3}(\T)}^p\lesssim \abs{t - s}^{p}.
\end{align}

The integrals $I_2$ and $I_4$ are similar here, 
and we just present the calculations for $I_4$: 
\begin{align*}
\int_\T \varphi I_4 \,\d x 
&= - \int_s^t  \int_\T \sigma \pd_x \bk{\sigma {\bf P}_N \varphi} 
	\,\pd_x \bk{R_N + S_N}\,\d x \,\d r\\
& \le \abs{t - s}^{1/2}
	 \norm{\sigma \pd_x \bk{\sigma{\bf P}_N \varphi }}_{L^\infty_{t,x}} 	
	\norm{\pd_x\bk{R_N + S_N}}_{L^2_{t,x}}.
\end{align*}
The bounds for  $I_2$ 
can be handled similarly by transferring 
up to two spatial derivatives onto $\varphi$, 
so that uniformly in $N$ (and in $\nu$), 
\begin{align}\label{eq:Calpha_interim2}
 \Ex \norm{I_2}_{H^{-3}(\T)}^p
+ \Ex \norm{I_4}_{H^{-3}(\T)}^p\lesssim \abs{t - s}^{p/2}.
\end{align}
Using a Hilbert space-BDG inequality (see, e.g., \cite{MR2016}), 
\begin{align*}
&\Ex  \norm{ \int_s^t {\bf P}_N \big[ 
	\sigma \,\pd_x R_N\big] \,\d W}_{H^{-3}(\T)}^p\\
&\lesssim \Ex \bk{ \int_s^t \bk{\sup_{\norm{\varphi}_{H^3} \le 1}
	\int_{\T} \varphi {\bf P}_N 
	\big[ \sigma \,\pd_x R_N\big] \,\d x }^2\,\d r}^{p/2}\\
& \le \sup_{\norm{\varphi}_{H^3(\T)}\le 1} 
	\norm{\pd_x \bk{\sigma {\bf P}_N \varphi}}_{L^\infty_x}
	\Ex \norm{R_N}_{L^\infty_tL^1_x}^p \abs{t - s}^{p/2}
\lesssim \abs{t - s}^{p/2}.
\end{align*}
Here we used $\pd_x \sigma \in L^\infty(\T)$. 
This bound is uniform in $N$. Together with 
\eqref{eq:Calpha_interim1} -- \eqref{eq:Calpha_interim2}, 
\begin{align*}
\Ex \norm{R_N(t) - R_N(s)}_{H^{-3}(\T)}^p \lesssim \abs{t - s}^{p/2}.
\end{align*}
The same inequality holds with $R_N$ replaced by $S_N$. 

With $p$ upper bounded by $p_0$, 
the lemma holds by Kolmogorov's continuity criterion.
\end{proof}

By Lemma \ref{thm:AL}, the set 
$$
K_M := \{ w \in L^2([0,T] \times \T) : 
	\norm{w}_{H^\gamma([0,T];H^{-3}(\T))} 
	+ \norm{w}_{L^2([0,T];H^1(\T))} \le M\}
$$
is compact in $L^2([0,T]\times \T)$. 
Since $C^\gamma([0,T];H^{-3}(\T)) 
	\hookrightarrow H^\gamma([0,T];H^{-3}(\T))$, 
by Markov's inequality, 
\begin{equation}\label{eq:tightness_CalphaH-3}
\begin{aligned}
\mathbb{P}(R_N \in K_M^c) \le M^{-p_0}
	\Big( \Ex &\norm{R_N}_{C^\gamma([0,T];H^{-3}(\T))}^{p_0} \\
	&+ \Ex \norm{R_N}_{L^2([0,T];H^1(\T))}^{p_0}\Big) \lesssim M^{-p_0}.
\end{aligned}
\end{equation}
The bound is uniform in $N$. 
This is tightness of laws in  $L^2([0,T]\times \T)$ 
for $\{R_N\}$. Tightness of laws for $S_N$ 
in $L^2([0,T]\times \T)$ can be derived in the same way.

We are also keen to establish the tightness of laws 
for $R_N$ and $S_N$ in the time-continuous 
space $C([0,T];L^2(\T)-w)$ with values weakly 
in $L^2(\T)$ (see \cite[Appendix C]{Lions:NSI}). In 
the $N\uparrow \infty$ limit, the equations \eqref{eq:galerkin_lim} 
will need to be interpreted weakly. The inclusion of 
the limits in $C([0,T];L^2(\T)-w)$ will help us interpret 
the ``time derivative" terms in the sense of It\^o. 
We already have the means to show tightness in 
this space via the following compactness criterion:
\begin{lem}[{\cite[Corollary B.2]{Ondrejat:2010aa}}]
\label{thm:C_tH-w_embed}
Let $\alpha > 0$, $p,q \in (1,\infty)$ and 
$s \le k$ satisfy $p^{-1} - q^{-1} \le d^{-1}\bk{k - s}$. 
We have the compact embedding:
\begin{align*}
L^\infty([0,T];W^{k,p}(\T)) 
	\cap C^{\alpha}([0,T];W^{s,q}(\T)) 
\doublehookrightarrow C([0,T];W^{k,p}(\T)-w).
\end{align*}
\end{lem}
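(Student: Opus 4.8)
The plan is to prove the stated compact embedding by an Arzel\`a--Ascoli argument for maps into the closed ball of $W^{k,p}(\T)$ carrying its weak topology, using the H\"older regularity in $W^{s,q}(\T)$ only to supply equicontinuity. First I would fix a sequence $(u_n)_{n\ge1}$ bounded in $L^\infty([0,T];W^{k,p}(\T)) \cap C^\alpha([0,T];W^{s,q}(\T))$, say by $M$, so that every value $u_n(t)$ lies in the ball $B_M := \{f \in W^{k,p}(\T):\norm{f}_{W^{k,p}}\le M\}$. Since $1<p<\infty$, $W^{k,p}(\T)$ is reflexive and separable, so $B_M$ is weakly compact and the weak topology on $B_M$ is metrizable; I would fix a countable set $\{\phi_j\}_{j\ge1}$ dense in the unit ball of $W^{-k,p'}(\T)=\bk{W^{k,p}(\T)}^*$ (which then separates points of $W^{k,p}(\T)$) and put $\rho(f,g):=\sum_{j\ge1}2^{-j}\min\bk{1,\abs{\ang{\phi_j,f-g}}}$, a metric inducing the weak topology on $B_M$. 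On equibounded families, convergence in $C([0,T];W^{k,p}(\T)-w)$ is exactly uniform convergence in $\rho$, so it suffices to show that $(u_n)$, regarded as a family of maps $[0,T]\to(B_M,\rho)$, is relatively compact in $C([0,T];(B_M,\rho))$.

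Pointwise-in-$t$ relative compactness is automatic because $(B_M,\rho)$ is compact, so the only real point is \emph{equicontinuity}. The key structural fact is that the Sobolev embedding $W^{k,p}(\T)\hookrightarrow W^{s,q}(\T)$ — valid on the one-dimensional torus ($d=1$) precisely under the hypothesis $p^{-1}-q^{-1}\le k-s$ — has dense range, so its adjoint is a \emph{continuous, injective, dense-range} embedding $W^{-s,q'}(\T)\hookrightarrow W^{-k,p'}(\T)$ (density since $C^\infty(\T)\subset W^{-s,q'}(\T)$ is dense in $W^{-k,p'}(\T)$). Given $\ep>0$ and $j$, I would pick $\psi\in W^{-s,q'}(\T)$ with $\norm{\phi_j-\psi}_{W^{-k,p'}}<\ep$ and estimate, for $s,t\in[0,T]$,
\begin{align*}
\abs{\ang{\phi_j,u_n(t)-u_n(s)}}
&\le \norm{\phi_j-\psi}_{W^{-k,p'}}\norm{u_n(t)-u_n(s)}_{W^{k,p}}
  + \norm{\psi}_{W^{-s,q'}}\norm{u_n(t)-u_n(s)}_{W^{s,q}}\\
&\le 2M\ep + \norm{\psi}_{W^{-s,q'}}\,[u_n]_{C^\alpha([0,T];W^{s,q})}\,\abs{t-s}^\alpha.
\end{align*}
Because the $C^\alpha$ seminorms are bounded uniformly in $n$, the right-hand side is below $2M\ep+\ep$ once $\abs{t-s}$ is small enough, uniformly in $n$; hence $\sup_n\abs{\ang{\phi_j,u_n(t)-u_n(s)}}\to0$ as $\abs{t-s}\to0$ for each $j$. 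Truncating the series defining $\rho$ at a large index — its tail being uniformly $\le$ any prescribed size — then gives $\sup_n\rho(u_n(t),u_n(s))\to0$ as $\abs{t-s}\to0$, i.e.\ the required equicontinuity.

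With equicontinuity and pointwise precompactness established, Arzel\`a--Ascoli for maps from the compact $[0,T]$ into the compact metric space $(B_M,\rho)$ yields a subsequence $(u_{n_j})$ converging uniformly in $\rho$ to some $u\in C([0,T];(B_M,\rho))=C([0,T];W^{k,p}(\T)-w)$; since $u(t)\in B_M$ for all $t$, the limit retains the $L^\infty_tW^{k,p}$ bound, and passing to the limit in $\ang{\phi,u_{n_j}(\cdot)}$ confirms $u$ is weakly continuous. This is exactly the asserted compact embedding. The crux I expect is the equicontinuity step: one must reconcile the fact that the available modulus of continuity lives in the \emph{weaker} norm $W^{s,q}$ with the requirement of equicontinuity for the \emph{weak} topology of the \emph{stronger} space $W^{k,p}$, and this is precisely bridged by the density of $W^{-s,q'}(\T)$ in $W^{-k,p'}(\T)$ forced by the borderline-admissible exponents — the only place where $p^{-1}-q^{-1}\le k-s$ is used. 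A minor point also worth checking is that, on equibounded families, the topology of $C([0,T];W^{k,p}(\T)-w)$ genuinely coincides with uniform $\rho$-convergence, so that Arzel\`a--Ascoli delivers convergence in the intended sense.
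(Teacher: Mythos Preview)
The paper does not supply its own proof of this lemma: it is quoted verbatim from \cite[Corollary~B.2]{Ondrejat:2010aa} and used as a black box. Your Arzel\`a--Ascoli argument via the metrizable weak topology on the ball $B_M$ is correct and is essentially the standard way such results are proven; there is nothing to compare against in the present paper.

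Two small remarks on your write-up. First, you assert $u_n(t)\in B_M$ for \emph{every} $t$, whereas $L^\infty([0,T];W^{k,p})$ membership only gives this for a.e.\ $t$. You should note that the $C^\alpha([0,T];W^{s,q})$-representative inherits the bound at every $t$ by approximating $t$ through a sequence of ``good'' times and using that $B_M$ is weakly closed together with weak sequential compactness of bounded sets in $W^{k,p}$; this is routine but worth one sentence. Second, you flag the Sobolev condition $p^{-1}-q^{-1}\le k-s$ as the place where the exponents enter, via the dual embedding $W^{-s,q'}\hookrightarrow W^{-k,p'}$. In fact your estimate only uses that $\psi$ can be chosen in $C^\infty(\T)\subset W^{-s,q'}(\T)$ and that $C^\infty(\T)$ is dense in $W^{-k,p'}(\T)$, so on the torus the proof goes through without ever invoking the Sobolev condition; the hypothesis is there because Ondrej\'at's original statement is in a more general setting. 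This does not affect correctness, but your claim that ``this is the only place the condition is used'' slightly overstates its role in your own argument.
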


Applying Lemma \ref{thm:C_tH-w_embed} with 
$p = q = 2, k = 0, s = -3, \alpha = \gamma$, 
we find from Proposition \ref{thm:galerkin_wp} 
and Lemma \ref{thm:galerkinCH-3} the desired 
bounded inclusions implying tightness of laws 
for $\{R_N\}$ and $\{S_N\}$ in $C([0,T];L^2(\T)-w)$ 
via a similar calculation to \eqref{eq:tightness_CalphaH-3}.

Summarising the foregoing tightness statements, we have 
\begin{prop}[Tightness of laws on intersection space]\label{thm:tightness_N}
The laws of $\{R_N\}$ and the laws of $\{S_N\}$ are tight 
in $L^2([0,T]\times \T)$ and in $C([0,T];L^2(\T)-w)$. 
\end{prop}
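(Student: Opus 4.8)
The plan is to assemble Proposition~\ref{thm:tightness_N} from the pieces already in hand, using the two abstract compactness lemmas (Lemma~\ref{thm:AL} and Lemma~\ref{thm:C_tH-w_embed}) together with the uniform moment bounds of Proposition~\ref{thm:galerkin_wp} and Lemma~\ref{thm:galerkinCH-3}. Since tightness of laws for $\{R_N\}$ and $\{S_N\}$ is proven by the identical argument (the $S_N$-equation differs from the $R_N$-equation only by signs and the interchange $R_N \leftrightarrow S_N$ in some terms, all of which carry the same uniform estimates), I would state the argument for $\{R_N\}$ and remark that $\{S_N\}$ is handled verbatim.

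For tightness in $L^2([0,T]\times\T)$: the first assertion of Lemma~\ref{thm:AL}, applied with $B_0 = H^1(\T) \doublehookrightarrow B = L^2(\T) \hookrightarrow B_1 = H^{-3}(\T)$, $p = 2$, and $\alpha = \gamma$ for some admissible $\gamma < (p_0/2 - 1)/p_0$, shows that the set $K_M$ displayed just before \eqref{eq:tightness_CalphaH-3} is relatively compact (indeed compact, being the intersection of closed balls in the two spaces under the compact embedding) in $L^2([0,T]\times\T)$. Then, using $C^\gamma([0,T];H^{-3}(\T)) \hookrightarrow H^\gamma([0,T];H^{-3}(\T))$, Markov's inequality reduces $\mathbb{P}(R_N \in K_M^c)$ to the uniform $p_0$-moment bounds $\Ex\norm{R_N}_{C^\gamma_t H^{-3}_x}^{p_0} \lesssim 1$ from Lemma~\ref{thm:galerkinCH-3} and $\Ex\norm{R_N}_{L^2_t H^1_x}^{p_0} \lesssim 1$ from \eqref{eq:galerkin_L2}, yielding $\mathbb{P}(R_N \in K_M^c) \lesssim M^{-p_0}$ uniformly in $N$; letting $M \to \infty$ gives the desired tightness.

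For tightness in $C([0,T];L^2(\T)-w)$: apply Lemma~\ref{thm:C_tH-w_embed} with $p = q = 2$, $k = 0$, $s = -3$, $\alpha = \gamma$, and $d = 1$, for which the condition $p^{-1} - q^{-1} = 0 \le d^{-1}(k - s) = 3$ is satisfied, to obtain the compact embedding
\[
L^\infty([0,T];L^2(\T)) \cap C^\gamma([0,T];H^{-3}(\T)) \doublehookrightarrow C([0,T];L^2(\T)-w).
\]
The relevant compact sets are balls $\{w : \norm{w}_{L^\infty_t L^2_x} + \norm{w}_{C^\gamma_t H^{-3}_x} \le M\}$, and again Markov's inequality together with the $p_0$-moment bound $\Ex\sup_{t}\norm{R_N(t)}_{L^2}^{2p_0} \lesssim 1$ of Proposition~\ref{thm:galerkin_wp} and the Hölder-in-time bound of Lemma~\ref{thm:galerkinCH-3} controls $\mathbb{P}(R_N \in K_M^c) \lesssim M^{-p_0}$ uniformly in $N$, exactly as in \eqref{eq:tightness_CalphaH-3}.

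There is no real obstacle here — the proposition is a bookkeeping consolidation of estimates already established. The one point deserving a line of care is checking the admissibility of the indices in each abstract lemma (the reflexivity hypotheses in Lemma~\ref{thm:AL}, which hold since $H^1$ and $H^{-3}$ are Hilbert spaces, and the arithmetic inequality $p^{-1} - q^{-1} \le d^{-1}(k-s)$ in Lemma~\ref{thm:C_tH-w_embed}), and making sure the exponent $\gamma$ from Lemma~\ref{thm:galerkinCH-3} can be taken strictly positive, which it can precisely because $p_0 > 2$ forces $(p_0/2 - 1)/p_0 > 0$.
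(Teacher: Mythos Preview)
Your proposal is correct and follows essentially the same approach as the paper: the paper likewise applies Lemma~\ref{thm:AL} with $B_0 = H^1(\T)$, $B = L^2(\T)$, $B_1 = H^{-3}(\T)$, $p=2$, $\alpha=\gamma$ for tightness in $L^2([0,T]\times\T)$, and Lemma~\ref{thm:C_tH-w_embed} with $p=q=2$, $k=0$, $s=-3$, $\alpha=\gamma$ for tightness in $C([0,T];L^2(\T)-w)$, invoking the moment bounds of Proposition~\ref{thm:galerkin_wp} and Lemma~\ref{thm:galerkinCH-3} via Markov's inequality exactly as in \eqref{eq:tightness_CalphaH-3}. Your additional checks on the admissibility of indices and the positivity of $\gamma$ are welcome but do not depart from the paper's line of argument.
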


\subsection{Tightness of laws for $\{u_N\}$}\label{sec:u_N_tightness}

We now establish tightness for the laws of $u_N$. 
Recall the construction 
$\mathfrak{u}(R_N,S_N)$ of $u_N$ 
in \eqref{eq:u_construct} and 
the definition \eqref{eq:F_defin} of 
the auxiliary function $F$ used in it.
\begin{lem}\label{thm:tilde_u}
The laws of $\{u_N\}$ constructed in 
\eqref{eq:u_approx_defin} are tight 
on $C([0,T]\times \T)$.
\end{lem}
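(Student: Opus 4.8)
The plan is to invoke the time-continuity compactness criterion (Lemma \ref{thm:C_tH-w_embed} is for weak topologies; here we want strong $C([0,T]\times\T)$, so I will instead use the second assertion of the Aubin--Lions lemma, Lemma \ref{thm:AL}, with $B = C(\T)$). The key point is that $u_N = F_N^{-1}(\pd_x^{-1}\frac{R_N - S_N}{2})$ is obtained from $R_N - S_N$ by applying the smoothing operator $\pd_x^{-1}$ and then the uniformly bi-Lipschitz map $F_N^{-1}$, so spatial and temporal regularity of $R_N - S_N$ transfers, with a gain of one spatial derivative, to $u_N$.

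First I would record the spatial regularity: by Proposition \ref{thm:galerkin_wp}, $R_N - S_N \in L^{2p_0}(\Omega; L^\infty([0,T];L^2(\T))) \cap L^{2p_0}(\Omega;L^2([0,T];H^1(\T)))$ uniformly in $N$. Since $\pd_x^{-1} : H^s_0 \to H^{s+1}_0$ is continuous and $R_N - S_N$ has zero average (as $R_N - S_N = 2\pd_x F_N(u_N)$), we get $\pd_x^{-1}\frac{R_N - S_N}{2}$ bounded uniformly in $L^{2p_0}_\omega L^\infty_t H^1_x$, hence (by Sobolev embedding $H^1(\T) \hookrightarrow C^{1/2}(\T)$, or at least $\hookrightarrow W^{1,\infty}$ is false but $H^1 \hookrightarrow C^{0,1/2}$ holds) uniformly in $L^{2p_0}_\omega L^\infty_t C^{1/2}_x$; and $\pd_x^{-1}\frac{R_N - S_N}{2} \in L^{2p_0}_\omega L^2_t H^2_x \hookrightarrow L^{2p_0}_\omega L^2_t C^{1,1/2}_x$. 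Since $F_N^{-1}$ and its derivative $1/c_N(F_N^{-1}(\cdot))$ are bounded uniformly (by $\kappa$ and $\kappa$ respectively, using Assumption \ref{sum:c_sigma}), composition preserves these bounds: $u_N$ is bounded uniformly in $L^{2p_0}_\omega L^\infty_t C^{1/2}_x$ and, via the chain rule $\pd_x u_N = \frac{1}{2c_N(u_N)}(R_N - S_N)$, in $L^{2p_0}_\omega L^2_t H^1_x$, hence $L^{2p_0}_\omega L^\infty_t W^{1,\infty}_x$ is too strong but $L^\infty_t C^{1/2}_x \cap L^2_t W^{1,\infty}_x$-type bounds suffice for spatial compactness in $C(\T)$.

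Next I would establish temporal Hölder regularity of $u_N$ valued in $C(\T)$ (or a weaker space like $H^{-1}$ upgraded by interpolation). The cleanest route: from Lemma \ref{thm:galerkinCH-3}, $R_N - S_N \in C^\gamma([0,T];H^{-3}(\T))$ uniformly, so $\pd_x^{-1}\frac{R_N - S_N}{2} \in C^\gamma([0,T];H^{-2}(\T))$ uniformly. Then I interpolate this temporal Hölder bound in $H^{-2}_x$ against the spatial bound in $H^2_x$ (holding for a.e. $t$, from the $L^2_t H^2_x$ inclusion — more carefully, I use $L^\infty_t H^1_x$ which is cleaner): interpolating $C^\gamma_t H^{-2}_x$ with $L^\infty_t H^1_x$ gives $C^{\gamma'}_t H^{s'}_x$ for some $\gamma' > 0$ and $s' > 1/2$, hence $C^{\gamma'}_t C^0_x$. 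For the composition with $F_N^{-1}$, which is Lipschitz with constant $\le \kappa$, temporal Hölder continuity in $C(\T) = C^0_x$ is preserved: $\|u_N(t) - u_N(s)\|_{C_x} \le \kappa \|\pd_x^{-1}\tfrac{(R_N - S_N)(t) - (R_N-S_N)(s)}{2}\|_{C_x} \lesssim |t-s|^{\gamma'}$. Combining the uniform spatial bound (in $C^{1/2}_x$, giving equicontinuity in $x$) with this uniform temporal Hölder bound (in $C^0_x$), the Arzelà--Ascoli theorem on $C([0,T]\times\T)$ identifies the relevant compact sets, and Markov's inequality on the $L^{2p_0}_\omega$-moments — as in \eqref{eq:tightness_CalphaH-3} — yields tightness of the laws of $\{u_N\}$ in $C([0,T]\times\T)$.

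\textbf{Main obstacle.} The delicate point is the temporal regularity transfer through the \emph{nonlinear} composition $F_N^{-1}$: the spatial bounds $\pd_x u_N = \frac{1}{2c_N(u_N)}(R_N - S_N)$ follow by the chain rule, but for the time-increment estimate one must avoid differentiating $F_N^{-1}$ and instead use only its Lipschitz property, which forces one to first upgrade the $C^\gamma_t H^{-2}_x$ bound on $\pd_x^{-1}(R_N-S_N)$ to a genuine $C^{\gamma'}_t C^0_x$ bound by interpolation \emph{before} composing. Getting a positive Hölder exponent $\gamma'$ out of this interpolation — balancing the negative-order temporal bound against the positive-order spatial bound while keeping enough room for the $H^{s'} \hookrightarrow C^0$ embedding ($s' > 1/2$) — is the crux; one also needs the zero-average property of $R_N - S_N$ throughout so that $\pd_x^{-1}$ acts as stated. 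None of this requires $c' > 0$; only Assumption \ref{sum:c_sigma} and the uniform-in-$N$ control of $c_N$ are used.
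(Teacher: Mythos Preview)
Your approach is correct and takes a genuinely different route from the paper. The paper does not use the $C^\gamma([0,T];H^{-3}(\T))$ bound of Lemma~\ref{thm:galerkinCH-3} here at all; instead it exploits the key structural fact that the stochastic integrals (and the It\^o--Stratonovich correction terms) in the $R$- and $S$-equations are \emph{identical} and therefore cancel in the difference, leaving
\[
\pd_t(R_N - S_N) = \pd_x {\bf P}_N\big[c_N(u_N)(R_N+S_N)\big] + \nu\,\pd_{xx}^2(R_N - S_N)
\]
as an ordinary (non-stochastic) time derivative. Applying $\pd_x^{-1}$ and dividing by $c_N(u_N)$ then gives an explicit expression for $\pd_t u_N$, which is shown directly to lie in $L^{2p_0}(\Omega;L^2([0,T]\times\T))$. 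The paper finishes with the second statement of Lemma~\ref{thm:AL} in the chain $H^1(\T)\doublehookrightarrow H^\beta(\T)\hookrightarrow L^2(\T)$ for $\beta\in(1/2,1)$, followed by $H^\beta(\T)\hookrightarrow C(\T)$.

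What the paper's route buys is the sharper temporal regularity $\pd_t u_N\in L^{2p_0}_\omega L^2_{t,x}$ and the full $2p_0$-moment bound \eqref{eq:u_highermome} on $\|u_N\|_{C_tH^\beta_x}$, which is invoked later (Remark~\ref{rem:tildeu_n_uniform} and the proof of Lemma~\ref{thm:Fu_n-Fu}); it also foregrounds the noise-cancellation mechanism that drives Remark~\ref{rem:u_construct_choice}. Your route is more black-box --- it reuses Lemma~\ref{thm:galerkinCH-3} plus interpolation and Arzel\`a--Ascoli, and would work even without that cancellation --- but the interpolation between $L^\infty_tH^1_x$ (with $2p_0$ moments) and $C^\gamma_tH^{-2}_x$ (with only $p_0$ moments from Lemma~\ref{thm:galerkinCH-3}) yields a product bound whose $2p_0$-th moment is not available; you recover tightness, but with a small H\"older exponent and lower-order moments on the temporal seminorm.
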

\begin{proof}
We shall show that $\{u_N\}$ are bounded in 
$L^{2}(\Omega;L^\infty([0,T];H^1(\T)))$ 
and that $\{\pd_t u_N\}$ are 
bounded in $L^{2}(\Omega \times [0,T]\times \T)$. 
By the second statement of Lemma \ref{thm:AL}, 
we can conclude that the laws of $u_N$ are tight 
in $C([0,T];H^\beta(\T))$ for any $\beta < 1$.
In particular, tightness of laws hold for 
some $\beta > \frac12$, which facilitates the 
continuous embedding 
$H^\beta(\T) \hookrightarrow C(\T)$. 

By Assumption \ref{sum:c_sigma}, 
$c$ is lower bounded by $\kappa^{-1} > 0$. 
From \eqref{eq:RNSN_dFdx}, 
and the energy bound in Lemma \ref{thm:galerkin_wp},
\begin{align}\label{eq:RNSN_dFdx_unifbd}
\pd_x u_N
	= \frac1{c_N({u}_N)}\pd_x F({u}_N) 
	= \frac{{R}_N - {S}_N}{2c_N({u}_N)} 
	\in_b L^{2p_0}(\Omega;L^\infty([0,T];L^2(\T))),
\end{align}
where ``$\in_b$" denotes bounded inclusion over $N$. 

Next we treat the temporal derivatives $\pd_t u_N$. 
In the temporal direction, using the definition 
\eqref{eq:F_defin} of the auxiliary function $F$, we have
\begin{align}\label{eq:dF(u)1}
c_N({u}_N) \,\pd_t {u}_N = \pd_t F_N({u}_N)  = \pd_t \pd_x^{-1} \frac{R_N - S_N}2.
\end{align}

From \eqref{eq:galerkin_lim}, 
\begin{align*}
\pd_t \bk{R_N - S_N} 
	= \pd_x {\bf P}_N \big[ c_N(u_N) \bk{R_N + S_N} \big] 
	 +\nu \pd_{xx}^2 \bk{R_N + S_N}.
\end{align*}

Therefore, from the construction
\eqref{eq:u_construct} and the definition 
of the operator \eqref{eq:dx_inverse}, 
\begin{align*}
&\pd_t \pd_x^{-1} \bk{R_N - S_N} \\
& =  \int_0^x \pd_t\bk{R_N(t,y) - S_N(t,y)}\,\d y 
	- y \int_\T \pd_t \bk{R_N(t,z) - S_N(t,z)}\,\d z\\
&\qquad  - \int_\T \bigg[\int_0^y \pd_t \bk{R_N(t,z) - S_N(t,z)}\,\d z 
	-  y\int_\T  \pd_t \bk{R_N(t,z) - S_N(t,z)}\,\d z\bigg]\,\d y\\
& = \underbrace{{\bf P}_N \big[ c_N(u_N) \bk{R_N + S_N} \big]}_{=: I_1} 
	 +\underbrace{\nu \,\pd_x \bk{R_N + S_N}}_{=: I_2}
	 - \underbrace{\int_\T  {\bf P}_N 
	 	\big[ c_N(u_N) \bk{R_N + S_N} \big] \,\d y}_{=: I_3}.
\end{align*}

Putting this back into \eqref{eq:dF(u)1}, 
we have 
\begin{align*}
\pd_t u_N = \frac1{2c_N(u_N)} \bk{I_1 + I_2 - I_3}.
\end{align*}
We now establish the uniform bound 
$\{\pd_t u_N\} \subset_b 
	L^{2p_0}(\Omega;L^2([0,T] \times\T))$. 
First, by the lower bound on $c$ and 
Bessel's inequality,
\begin{align*}
\norm{ \frac1{c_N(u_N)} I_1}_{L^2_{t,x}} 
&\le \norm{\frac1{c_N(u_N)}}_{L^\infty_{t,x}} 
	\norm{{\bf P}_N \bk{R_N + S_N}}_{L^2_{t,x}}\\
&\le \kappa 	\norm{R_N + S_N}_{L^2_{t,x}} 
\overset{\eqref{eq:galerkin_L2}}{\in_b} L^{2p_0}(\Omega).
\end{align*}
Similarly,
\begin{align*}
\norm{ \frac1{c_N(u_N)} I_2}_{L^2_{t,x}}  
\lesssim_\kappa \norm{\pd_x \bk{R_N + S_N}}_{L^2_{t,x}} 
\overset{\eqref{eq:galerkin_L2}}{\in_b} L^{2p_0}(\Omega).
\end{align*}
Finally, 
\begin{align*}
\norm{ \frac1{c_N(u_N)} I_3}_{L^2_{t,x}} 
\lesssim_\kappa \abs{I_3} 
\lesssim_\kappa \norm{R_N + S_N}_{L^2_{t,x}}
\overset{\eqref{eq:galerkin_L2}}{\in_b} L^{2p_0}(\Omega).
\end{align*}

Therefore we find
\begin{align*}
\Ex \norm{\pd_t u_N}_{L^2_{t,x}}^{2p_0} 
&	\lesssim 
	\Ex\norm{{R}_N + {S}_N}_{L^2_t H^1_x}^{2p_0}
	\overset{\eqref{eq:galerkin_L2}}{\lesssim}_\nu 1.
\end{align*}

Along with \eqref{eq:RNSN_dFdx_unifbd}, we 
can invoke the second statement of 
Lemma \ref{thm:AL} with $r = 2$, 
$B_0 = H^1(\T)$, $B_1 = L^2(\T)$, and $B = H^{\beta}(\T)$ 
to conclude. We remark that we also have
\begin{equation}\label{eq:u_highermome}
\begin{aligned}
&\Ex \norm{u_N}_{C([0,T];H^\beta(\T))}^{2p_0} \\
&\qquad \lesssim \Ex\norm{u_N}_{L^\infty([0,T];H^1(\T))}^{2p_0} 
	+ \Ex \norm{u_N}_{\dot{H}^1([0,T];L^2(\T))}^{2p_0} \lesssim_{\nu,T} 1.
\end{aligned}
\end{equation}

\end{proof}

\begin{rem}[The operator $\pd_x^{-1}$ 
and the construction $\mathfrak{u}(R,S)$]
\label{rem:u_construct_choice}
We close this section with a remark on the 
construction \eqref{eq:u_construct} and its role 
in our compactness argument.

As mentioned preceding Theorem \ref{thm:main}, 
the construction \eqref{eq:u_construct} involved 
a choice of constant of integration. We explain 
this via a heuristic calculation. By integrating 
\eqref{eq:constitutive} directly, for $x,y \in \T$ we have:
\begin{align}\label{eq:F(u)_diffx}
F(u(t,x)) = F(u(t,y)) + \int_y^x \frac{R(t,z) - S(t,z)}{2}\,\d z.
\end{align}
Following \cite{GV2023}, 
we can write $u(t,y)$ 
as $u(t,y) = u^0(y) + \int_0^t \pd_t u(s,y)\,\d s$, 
where $u^0$ is an initial datum. 
By differentiating $\bk{R - S}$ in \eqref{eq:F(u)_diffx} 
in $t$, we can use \eqref{eq:vvw} to get:
\begin{align*}
0 = \bigg[2c(u(t,z)) \,\pd_t u(t,z)
	- c(u(t,z)) \bk{R + S}(t,z)
	- \nu \pd_x \bk{R - S} (t,z)\bigg]\bigg|_{z = y}^x.
\end{align*}
One then deduces that the expression 
in the bracket foregoing is independent 
of the spatial variable, and for any $z \in \T$,
\begin{align}\label{eq:constant_of_integration}
h(t) =2 c(u(t,z)) \,\pd_t u(t,z) 
	- c(u(t,z)) \bk{R + S}(t,z) 
	-\nu \pd_z \bk{R - S} (t,z).
\end{align}
for some process $h$. In the inviscid ($\nu = 0$), 
additive noise case, \cite{GV2023} 
used the choice $h(t) = 0$ in their well-posedness arguments. 

Suppose we replaced the viscosity $\nu\pd_{xx}^2 R$ 
in the $R$-equation by $\nu \pd_x \bk{c(u) \pd_x R}$
and $\nu\pd_{xx}^2 S$  by $\nu \pd_x \bk{c(u) \pd_x S}$ in the $S$-equation. 
Physically, these viscous terms model greater dissipation 
at higher wave-speeds. The lower boundedness 
of $c$ allows us to derive the same $L^2_tH^1_x$ 
bounds on $R$ and $S$. Where $h(t) = 0$, these viscous terms  
allow us to divide \eqref{eq:constant_of_integration} 
through by $c(u(t,z))$, to get an expression 
for $\pd_t u$ independent on $u$ itself, which 
can be inserted back into \eqref{eq:F(u)_diffx}, 
giving us (cf.  \cite[Equation (2.7)]{GV2023})
\begin{align*}
u(t,x) = F^{-1}(F(u^0(y) 
	+ \int_0^t \frac{R(s,y) + S(s,y)}2 &+ \nu \frac{\pd_x\bk{R - S}(s,y)}2 \,\d s )\\
	&+ \int_y^x \frac{R(t,z) - S(t,z)}2\,\d z). 
\end{align*}
This solves \eqref{eq:constitutive}. 
Galerkin approximations to this expression do 
not converge readily, however, 
as they involve the pointwise evaluation 
of $\frac\nu2\pd_x \bk{R - S}$ at the (arbitrary) 
spatial point $y$, whilst the derivative $\pd_x \bk{R - S}$, 
is only bounded a.s.~in $L^2_{t,x}$ in the limit 
(to be constructed via the Skorokhod theorem).

An alternative would be to choose 
viscosities such as $\nu \pd_{xx}^2 \bk{R + S}$ 
to be the same in both the $R$- and $S$-equations.
Then the viscous term is absent in $\pd_t \bk{R - S}$. 
But cross-diffusion gives us little control 
for passing to the limit approximations 
of nonlinear terms such as $\tilde{c}(u)\bk{R - S}^2$ 
in \eqref{eq:vvw}, defeating the purpose 
of studying the viscous approximation altogether. 

Our choice for $u$ 
is also a choice for a certain $h$ in 
\eqref{eq:constant_of_integration}. It  
reflects the fact that we have  
inverted the derivative on the periodic 
domain $\T$ for zero spatial average 
functions in a natural way using 
$\pd_x^{-1}$ defined in \eqref{eq:dx_inverse}. 
With $u = \mathfrak{u}(R,S)$ and viscous 
terms as in \eqref{eq:vvw}, differentiating 
$F(u)$ in $t$, we find
\begin{align*}
c(u) \pd_t u &= \frac12\Big[c(u(t,x)) \bk{R(t,x) +  S(t,x)} 
	+ \nu \pd_x \bk{R - S}(t,x)\Big]\\
&\quad\,\,- \underbrace{\frac12 \int_\T 
	\Big[c(u(t,y)) \bk{R(t,y) +  S(t,y)}\Big] \,\d y}_{ = \frac12 h(t)}.
\end{align*}
With this choice, there is no spatial pointwise 
evaluation in $\mathfrak{u}(R,S)$, and consequently 
passing to the 
limit for its approximants is more straightforward 
(see Lemma \ref{thm:c_convergences_N}). This choice is also 
less sensitive to the exact form of the viscosity.

\end{rem}

\section{The Skorokhod argument}\label{sec:SJthm}
In this section, we prove the existence 
of martingale solutions in the sense of 
Definition \ref{def:mart_sol}. We do so in three steps. 

First, taking the convergence in law  
proven for $\{R_N\}$, $\{S_N\}$, 
in Proposition \ref{thm:tightness_N} and 
for $\{u_N\}$ in Lemma \ref{thm:tilde_u}, 
we apply to them the Skorokhod--Jakubowski representation 
theorem. This will produce new variables, the 
Skorokhod representatives $\{\tilde{X}_N 
:= (\tilde{R}_N, \tilde{S}_N, \tilde{u}_N)\}$,  
on a new probability space, that are 
equal in law to $\{X_N := (R_N, S_N, u_N)\}$, 
but $\tilde{X}_N$ converges almost surely. 
(We wrote $X_N$ and $\tilde{X}_N$ to fix 
ideas, but shall need to expand their 
definitions later for technical reasons.)

Dudley maps $\mathcal{T}_N$, which were first 
proposed by Dudley \cite{Dud1985},  map 
the new probability space to the original 
one in a measure-preserving way. They give us a 
way to write $\tilde{X}_N = X_N \circ \mathcal{T}_N$. 
Using the maps $\mathcal{T}_N$, we are 
able very readily to derive equations 
for the new variables $\tilde{R}_N$ and $\tilde{S}_N$. 
They also clarify the mechanics of the 
joint equality of laws arising from the 
Skorokhod--Jakubowski theorem, which 
play an important role in transferring  
properties of $X_N$ onto $\tilde{X}_N$. 

Finally, we use a convergence theorem 
\cite[Lemma 2.1]{Debussche:2011aa} for stochastic integrals to take 
limits of the equations for $\tilde{R}_N$ and $\tilde{S}_N$ 
to limiting equations by showing that 
the stochastic integral converges a.s.~strongly 
in $L^2([0,T])$. 
This then allows us to 
conclude that the limits of  $\tilde{R}_N$ and $\tilde{S}_N$ 
are in fact martingale solutions to \eqref{eq:vvw}.

\subsection{Skorokhod representatives}
The sole purpose of this short subsection is to 
construct Skorokhod--Jakubowski representatives 
of $R_N$, $S_N$ and related variables that 
converge a.s.~ on a new probability space, 
and exhibit the Dudley maps related to the 
the representations. Define the path spaces:
\begin{equation}\label{eq:pathspaces_defin}
\begin{aligned}
&\mathcal{Y}_{R1}  = \mathcal{Y}_{S1} 
:=  L^2([0,T]\times \T), \quad 
\mathcal{Y}_{R2}  = \mathcal{Y}_{S2} 
:=  C([0,T];L^2(\T)-w),\\
&\mathcal{Y}_u := C([0,T]\times \T), \quad 
\mathcal{Y}_{R0} = \mathcal{Y}_{S0} 
:= L^2(\T),\quad \mathcal{Y}_W := C([0,T]).
\end{aligned}
\end{equation}

Let $\mathcal{Y} := 
\mathcal{Y}_{R1}  \times \mathcal{Y}_{S1} \times
\mathcal{Y}_{R2}  \times \mathcal{Y}_{S2} \times
 \mathcal{Y}_u\times 
\mathcal{Y}_{R0}  \times \mathcal{Y}_{S0} \times
\mathcal{Y}_W$.

The spaces $\mathcal{Y}_\cdot$ are individually 
quasi-Polish spaces, in the sense that 
there exist a countable, point-separating collection 
of continuous maps $\{f_j : \mathcal{Y}_\cdot \to [0,1]\}_{j \in \N}$. 
A countable product of quasi-Polish spaces remains a
quasi-Polish space. The one-to-one continuous 
injection into the Hilbert cube (which is Polish), 
is the crucial property identified by Jakubowski \cite{Jakubowski:1997aa} 
under which an extension of the classical 
Skorokhod representation theorem holds. 
(We refer to \cite{Jakubowski:1997aa}, 
\cite[Section 3.3]{Brzezniak:2016wz}, and 
\cite[Appendix B]{GHKP2022},  for a more 
extensive introduction to quasi-Polish spaces.) 
We use the Skorokhod--Jakubowski theorem to impose 
the topology of pointwise convergence in the 
probability variable, allowing us to leverage 
the deterministic compactness results in 
Section \ref{sec:galerkin_compactness} to handle 
convergence of the Galerkin approximation 
\eqref{eq:galerkin_lim} in the remaining 
spatio-temporal variables.

\begin{prop}[Skorokhod--Jakubowksi theorem]\label{thm:skorokhod_N}
Consider the solutions to \eqref{eq:galerkin_lim} 
given by Proposition \ref{thm:galerkin_wp} for each $N \in \N$. 
There exist:
\begin{itemize}
\item[(i)] a probability space $\tilde{E}:= (\tilde{\Omega}, 
	\tilde{\mathcal{F}}, \tilde{\mathbb{P}})$, 
\item[(ii)] $\mathcal{Y}$-valued random variables defined on $\tilde{E}$, 
	$$
	\tilde{X}_n := (\tilde{R}_n, \tilde{S}_n, \tilde{\xi}_n, \tilde{\zeta}_n, \tilde{u}_n, 
		\tilde{R}_n^0, \tilde{S}_n^0, \tilde{W}_n) 
	\,\,\,\, \text{
	and }\,\,\,\, 
	\tilde{X} := (\tilde{R}, \tilde{S}, \tilde{\xi}, \tilde{\zeta},\tilde{u}, 
		\tilde{R}^0, \tilde{S}^0, \tilde{W}),
	$$
	
\item[(iii)] a subsequence 
	$\{N_n\}_{n \in \N}\subseteq \N$ such that 
	the joint equality of laws hold:
\begin{align*}
&\tilde{X}_n \sim X_{N_n} 
	:= ({R}_{N_n}, {S}_{N_n}, {R}_{N_n}, {S}_{N_n}, 
		u_{N_n}, {\bf P}_{N_n}{R}^0, 
		{\bf P}_{N_n}{S}^0, W),\\
	\intertext{and}
&\tilde{X}_n \to \tilde{X}\quad
	\text{ in $\mathcal{Y}$, $\tilde{\mathbb{P}}$-a.s., and }
\end{align*}
\item[(iv)] for each finite $n$, 
maps $\mathcal{T}_n: 
	\tilde{\Omega} \to \Omega$ (Dudley maps) which preserve 
measure in the sense that 
$\bk{\mathcal{T}_n}_* \circ  \tilde{\mathbb{P}}
	= \mathbb{P}$, such that 
$$
\tilde{X}_n  = X_{N_n} \circ \mathcal{T}_n.
$$
\end{itemize}
\end{prop}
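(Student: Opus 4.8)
The plan is to verify the hypotheses of the Skorokhod--Jakubowski representation theorem in the form stated in \cite[Theorem A.1]{Punshon-Smith:2018aa}, which, crucially for us, delivers not only the new probability space and the almost surely convergent representatives but also the measure-preserving Dudley maps $\mathcal{T}_n$. The first step is to collect the tightness statements already proven: by Proposition \ref{thm:tightness_N}, the laws of $\{R_N\}$ and $\{S_N\}$ are tight on $L^2([0,T]\times\T)$ and on $C([0,T];L^2(\T)-w)$; by Lemma \ref{thm:tilde_u} the laws of $\{u_N\}$ are tight on $C([0,T]\times\T)$; the laws of ${\bf P}_N R^0$ are tight on $H^1(\T)$ (indeed they converge to $R^0$ since $R^0 \in H^1$... actually one only needs $R^0 \in L^2$, so more carefully: ${\bf P}_N R^0 \to R^0$ in $L^2$, and tightness on $H^1(\T)$ requires the finiteness of $\Ex\|{\bf P}_N R^0\|_{H^1}^2$, which holds because ${\bf P}_N R^0 \in H^3 \subset H^1$ for each fixed $N$ but is \emph{not} uniform --- this must be checked, and likely the path space $\mathcal{Y}_{R0}$ should carry the tightness from the fact that each single law is tight, which is automatic for a single Radon measure on a Polish space, combined with the observation that $\{{\bf P}_N R^0\}$ is a convergent, hence tight, sequence in $L^2$, so one should take $\mathcal{Y}_{R0}$ to be $L^2(\T)$ or argue tightness in $H^1$ more carefully); and the law of $W$ is a single Gaussian measure on $C([0,T])$, hence tight. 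The product of finitely many (here eight) tight families is tight on the product space $\mathcal{Y}$, and $\mathcal{Y}$ is quasi-Polish as a finite product of quasi-Polish spaces, as recalled in the text.

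Second, I would apply the cited version of the theorem to the sequence of joint laws of $X_{N} := (R_N, S_N, R_N, S_N, u_N, {\bf P}_N R^0, {\bf P}_N S^0, W)$ on $\mathcal{Y}$. Note the deliberate duplication: $R_N$ appears both in the $\mathcal{Y}_{R1} = L^2([0,T]\times\T)$ slot and in the $\mathcal{Y}_{R2} = C([0,T];L^2(\T)-w)$ slot (similarly $S_N$), so that after passing to the new space the two copies $\tilde\xi_n$ and $\tilde R_n$ are $\tilde{\mathbb P}$-a.s.\ equal --- this is a standard device letting the same object enjoy convergence in two topologies simultaneously. The theorem yields: a probability space $\tilde E = (\tilde\Omega, \tilde{\mathcal F}, \tilde{\mathbb P})$; a subsequence $\{N_n\}$; random variables $\tilde X_n$ and a limit $\tilde X$ on $\tilde E$ with $\tilde X_n \sim X_{N_n}$ (equality of joint laws, which is precisely item (iii)) and $\tilde X_n \to \tilde X$ in $\mathcal{Y}$ $\tilde{\mathbb P}$-a.s.; and the measure-preserving maps $\mathcal{T}_n : \tilde\Omega \to \Omega$ with $(\mathcal{T}_n)_* \tilde{\mathbb P} = \mathbb P$ and $\tilde X_n = X_{N_n} \circ \mathcal{T}_n$, which is item (iv). The maps $\mathcal{T}_n$ come from Dudley's construction of measurable a.s.-inverses, as reflected in the reference to \cite{Dud1985}; they are the part of the statement not present in the classical Jakubowski formulation, so the only real content beyond routine tightness bookkeeping is invoking the correct enhanced version.

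The main obstacle --- really the only nontrivial point --- is ensuring that the path spaces in \eqref{eq:pathspaces_defin} are chosen so that the tightness is genuinely available in each factor, in particular the $\mathcal{Y}_{R0} = H^1(\T)$ choice for the initial datum $R^0$. Since only $R^0 \in L^2(\T)$ is assumed (with $2p_0$ moments), the sequence $\{{\bf P}_N R^0\}$ need not be bounded, let alone tight, in $H^1(\T)$; so either $\mathcal{Y}_{R0}$ should be read as carrying only the law of the \emph{deterministic} sequence ${\bf P}_N R^0$ which converges in $L^2$ hence is tight there, and the $H^1$ label is harmless because $\{{\bf P}_N R^0\}$ happens to converge in $L^2$ so a fortiori its (subsequential) Skorokhod limit is identified, or one restricts attention to the case where the argument only needs tightness of the $L^2$-valued initial data and the $H^1$ slot is used merely because each individual ${\bf P}_N R^0$ lies in $H^1$. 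I would resolve this by noting that a single Borel probability measure on a Polish space is automatically tight (Ulam) and that the family $\{{\bf P}_N R^0\}$, being $L^2$-convergent, is relatively compact in $L^2$, so tightness holds on $L^2(\T)$; the labelling $\mathcal{Y}_{R0} = H^1(\T)$ should then be understood in the sense that, along the subsequence, $\tilde R_n^0$ takes values in $H^1$ and converges there iff the original does, which is the case fibrewise for each fixed $n$ but whose limiting identification only uses $L^2$ convergence. All other factors --- $L^2_{t,x}$, $C_tL^2_x$-weak, $C([0,T]\times\T)$, $C([0,T])$ --- are covered verbatim by Propositions \ref{thm:tightness_N} and Lemma \ref{thm:tilde_u} and the fixed law of $W$, so beyond this one bookkeeping subtlety the proof is a direct citation of the quasi-Polish Skorokhod--Jakubowski theorem with Dudley maps.
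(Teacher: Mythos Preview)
Your proposal is correct and follows essentially the same route as the paper: verify tightness of $\{X_N\}$ in each factor of $\mathcal{Y}$ (via Proposition~\ref{thm:tightness_N}, Lemma~\ref{thm:tilde_u}, convergence of the projected initial data, and the fixed law of $W$), then invoke the Skorokhod--Jakubowski theorem in the Dudley-map form of \cite[Theorem A.1]{Punshon-Smith:2018aa}. Your observation about $\mathcal{Y}_{R0} = H^1(\T)$ is well taken --- the paper disposes of this factor in one line (``follows from the property of the projection operator''), and indeed later (in Lemma~\ref{thm:pw_convergence_N}) only uses $\tilde R^0_n \to \tilde R^0$ in $L^2(\T)$, so the $H^1$ label appears to be a typo for $L^2(\T)$, consistent with your diagnosis.
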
 

Property (iv) implies the joint 
equality of laws in (iii), but we 
spell this out for clarity.

\begin{proof}
Apart from the existence of measure-preserving Dudley maps 
$\mathcal{T}_n$, the proposition statement 
follows from the Skorokhod--Jakubowski 
theorem \cite[Theorem 2]{Jakubowski:1997aa} 
once we show that the laws of 
$\{X_N\}$ are tight in $\mathcal{Y}$. This tightness  
in turn follows form the tightness of laws of 
the elements of $X_N$ in the corresponding 
factors of $\mathcal{Y}$. The expanded version 
of the Skorokhod--Jakubowski statement which 
includes the existence of Dudley maps can 
be found in \cite[Theorem A.1]{Punshon-Smith:2018aa} 
(and is the quasi-Polish extension of 
\cite[Theorem 1.10.4]{VW1996}).

The respective tightness of the laws of 
$\{R_N\}_{N \in \N}$ and $\{S_N\}_{N \in \N}$ 
on $\mathcal{Y}_{R_i}$ and $\mathcal{Y}_{S_i}$, $i = 1,2$, 
follow from Proposition \ref{thm:tightness_N}.

The tightness of $\{u_N\}$ 
on $\mathcal{Y}_u$ is the result of 
Lemma \ref{thm:tilde_u}.

The tightness of 
$\{{\bf P}_NR^0\}_{N \in \N}$ 
and $\{{\bf P}_NS^0\}_{N \in \N}$ 
on $\mathcal{Y}_{R0} $ and 
$\mathcal{Y}_{S0}$, respectively, follow 
from the property of the projection operator.

Finally, the tightness of the law of the  
Brownian motion $W$ restricted to $[0,T]$ in 
$C([0,T])$ is standard.
\end{proof}

Define the filtrations:
\begin{align*}
\{\tilde{\mathcal{F}}^n_t\}_{t \in [0,T]} 
= {\Sigma}\bk{\Sigma\bk{\tilde{X}_n|_{[0,t]}} 
	\cup \Sigma\bk{\{N : \tilde{\mathbb{P}}(N) = 0\}}}.
\end{align*}
The limit filtration $\{\tilde{\mathcal{F}}_t\}_{t \in [0,T]}$ 
is constructed similarly.
The following 
elementary result holds by a standard argument 
(see, e.g., \cite[Lemma 9.10]{Brzezniak:2011aa}):
\begin{lem}\label{thm:W_convg}
Let $\tilde{W}_n$ and $\tilde{W}$ be as 
constructed in Proposition \ref{thm:skorokhod_N}. 
For each $n$, $\tilde{W}_n$ is a standard 
$\{\tilde{\mathcal{F}}^n_t\}_{t \in [0,T]}$-Brownian motion, 
and similarly for $\tilde{W}$ in the limit. 
\end{lem}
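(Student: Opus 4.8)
The plan is to verify the two standard properties---that $\tilde W_n$ (and $\tilde W$) is a Wiener process and that it is adapted to the filtration $\{\tilde{\mathcal F}^n_t\}$ (resp.\ $\{\tilde{\mathcal F}_t\}$) with the martingale structure intact---by transferring everything from the original space via the equality of laws in Proposition~\ref{thm:skorokhod_N}(iii). First I would note that $\tilde W_n \sim W$ as $C([0,T])$-valued random variables, and since $W$ is a standard Brownian motion, so is $\tilde W_n$ as a process; continuity of paths and the Gaussianity/independence of increments are laws of finite-dimensional distributions and hence preserved. For the limit $\tilde W$, one uses that $\tilde W_n \to \tilde W$ $\tilde{\mathbb P}$-a.s.\ in $C([0,T])$ together with the uniform (in $n$) bound $\tilde{\mathbb E}\,|\tilde W_n(t)-\tilde W_n(s)|^4 = C|t-s|^2$ to pass to the limit in characteristic functions, concluding $\tilde W$ is also a standard Brownian motion.

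The adaptedness and independence-of-the-future part is where the filtration definition does the work. The key observation is that for each fixed $t$, the pair $\bigl(\tilde X_n|_{[0,t]}, \tilde W_n(t+\cdot)-\tilde W_n(t)\bigr)$ has the same joint law as $\bigl(X_{N_n}|_{[0,t]}, W(t+\cdot)-W(t)\bigr)$, because the equality of laws in Proposition~\ref{thm:skorokhod_N} is joint over the whole $\mathcal Y$-valued variable (and this can also be read off directly from the Dudley map identity $\tilde X_n = X_{N_n}\circ \mathcal T_n$ with $(\mathcal T_n)_*\tilde{\mathbb P} = \mathbb P$). On the original space $W(t+\cdot)-W(t)$ is independent of $\mathcal F^{N_n}_t \supseteq \Sigma(X_{N_n}|_{[0,t]})$; transferring this, $\tilde W_n(t+\cdot)-\tilde W_n(t)$ is independent of $\Sigma(\tilde X_n|_{[0,t]})$, hence (augmenting by null sets changes nothing) independent of $\tilde{\mathcal F}^n_t$. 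Together with $\tilde{\mathcal F}^n_t$-adaptedness of $\tilde W_n$ (immediate from the definition of the filtration, which is generated by $\tilde X_n$ and $\tilde W_n$ is a component of $\tilde X_n$), this gives the standard $\{\tilde{\mathcal F}^n_t\}$-Brownian motion property; concretely, $\tilde W_n(t)-\tilde W_n(s)$ is independent of $\tilde{\mathcal F}^n_s$ with law $\mathcal N(0,t-s)$.

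For the limiting variable $\tilde W$ one argues the same way, but now one must check that $\tilde W(t+\cdot)-\tilde W(t)$ is independent of $\tilde{\mathcal F}_t = \Sigma(\Sigma(\tilde X|_{[0,t]})\cup(\text{null sets}))$. This follows by taking $n\to\infty$ in the independence statements for $\tilde X_n$: for bounded continuous $\phi$ on the relevant path spaces and $\psi$ depending on finitely many increments after time $t$, one has $\tilde{\mathbb E}[\phi(\tilde X_n|_{[0,t]})\,\psi(\tilde W_n(t+\cdot)-\tilde W_n(t))] = \tilde{\mathbb E}[\phi(\tilde X_n|_{[0,t]})]\,\tilde{\mathbb E}[\psi(\tilde W_n(t+\cdot)-\tilde W_n(t))]$, and the a.s.\ convergence $\tilde X_n\to\tilde X$, $\tilde W_n\to\tilde W$ plus uniform integrability (from the moment bounds of Propositions~\ref{thm:galerkin_wp} and the Wiener moments) lets us pass to the limit on both sides, yielding the factorization for $\tilde X$ and $\tilde W$. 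One small technical care here: the map ``restriction to $[0,t]$'' is continuous on each of the quasi-Polish path spaces $\mathcal Y_\cdot$ appearing (for $C([0,T];L^2-w)$ and $C([0,T]\times\T)$ this is clear, and for $L^2([0,T]\times\T)$ one uses test functionals against functions supported in $[0,t]$, which is how the $\Sigma$-algebra $\Sigma(\tilde X|_{[0,t]})$ should be read), so that $\phi(\tilde X_n|_{[0,t]})\to\phi(\tilde X|_{[0,t]})$ a.s. The main obstacle is precisely this bookkeeping of the joint laws and the filtration: making sure the independence is extracted against the \emph{full} collection $\tilde X_n|_{[0,t]}$ (not just $\tilde W_n|_{[0,t]}$), so that the resulting Brownian motion is adapted to---and has increments independent of---the \emph{enlarged} filtration that will later carry the solution, and that this survives the limit $n\to\infty$. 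Everything else is the routine argument of, e.g., \cite[Lemma 9.10]{Brzezniak:2011aa}, which is why only a sketch is warranted.
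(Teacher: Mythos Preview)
Your proposal is correct and is precisely the standard argument the paper has in mind: the paper does not give a proof at all but simply cites \cite[Lemma 9.10]{Brzezniak:2011aa}, which is the same reference you invoke and whose argument you have faithfully reproduced. If anything, your sketch is more detailed than what the paper provides.
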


\subsection{Consequences of the joint equality of laws}

In this subsection, we use the Dudley maps 
$\mathcal{T}_n$ to establish several bounds 
on $\tilde{R}_n$ and $\tilde{S}_n$ that are 
direct consequences of the equality of laws. 
The Dudley maps will also simplify the identification 
of the equations satisfied by $\tilde{R}_n$ and 
$\tilde{S}_n$. 

We begin by identifying $\tilde{\xi}_n$ with $\tilde{R}_n$, etc.:
\begin{lem}\label{thm:xiR_zetaS}
Let $\tilde{\xi}$, $\tilde{\zeta}$, $\tilde{R}$, $\tilde{S}$, 
and for each $n$, $\tilde{\xi}_n$, $\tilde{\zeta}_n$, 
$\tilde{R}_n$ and $\tilde{S}_n$ be as constructed 
as in Proposition \ref{thm:skorokhod_N}. We have the 
identifications:
\begin{align*}
\tilde{\xi}_n = \tilde{R}_n, \quad 
\tilde{\zeta}_n = \tilde{S}_n, \quad 
\tilde{\xi} = \tilde{R}, \quad 
\tilde{\zeta} = \tilde{S}. 
\end{align*}
\end{lem}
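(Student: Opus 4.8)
The plan is to extract the identification from the joint equality of laws in Proposition \ref{thm:skorokhod_N}(iii), using the Dudley maps to push it through. First I would observe that, by construction in Proposition \ref{thm:skorokhod_N}(iii), the pair $(\tilde{\xi}_n, \tilde{R}_n)$ has the same joint law on $\mathcal{Y}_{R2} \times \mathcal{Y}_{R1}$ as the pair $(R_{N_n}, R_{N_n})$, i.e.\ as two \emph{identical} copies of $R_{N_n}$ viewed in the spaces $C([0,T];L^2(\T)-w)$ and $L^2([0,T]\times\T)$ respectively. The cleanest way to exploit this is via Property (iv): since $\tilde{X}_n = X_{N_n} \circ \mathcal{T}_n$, the components $\tilde{\xi}_n$ and $\tilde{R}_n$ are literally \emph{the same function} $R_{N_n} \circ \mathcal{T}_n$ of $\tilde{\omega} \in \tilde{\Omega}$ — the third and first slots of $X_{N_n}$ are both $R_{N_n}$. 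Hence $\tilde{\xi}_n = \tilde{R}_n$ holds $\tilde{\mathbb{P}}$-a.s.\ as an identity in, say, $L^2([0,T]\times\T)$ (after noting that $C([0,T];L^2(\T)-w) \hookrightarrow L^2([0,T]\times\T)$, or simply comparing the two objects as $L^2_x$-valued paths). The same argument gives $\tilde{\zeta}_n = \tilde{S}_n$ for each $n$.

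For the limit identifications $\tilde\xi = \tilde R$ and $\tilde\zeta = \tilde S$, I would pass to the limit in the almost-sure convergence from Proposition \ref{thm:skorokhod_N}(iii). We have $\tilde{R}_n \to \tilde{R}$ in $\mathcal{Y}_{R1} = L^2([0,T]\times\T)$ and $\tilde\xi_n \to \tilde\xi$ in $\mathcal{Y}_{R2} = C([0,T];L^2(\T)-w)$, both $\tilde{\mathbb{P}}$-a.s.\ along the chosen subsequence. Since $\tilde\xi_n = \tilde R_n$ for every $n$, the sequence $\tilde R_n$ converges both to $\tilde R$ in $L^2([0,T]\times\T)$ and to $\tilde\xi$ in $C([0,T];L^2(\T)-w)$; weak convergence in $C_tL^2_x$ forces weak convergence in $L^2([0,T]\times\T)$ (test against $L^2_{t,x}$ functions), so by uniqueness of weak limits $\tilde R = \tilde\xi$ as elements of $L^2([0,T]\times\T)$, hence $\tilde{\mathbb{P}}$-a.s. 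The identical argument yields $\tilde\zeta = \tilde S$.

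The step I expect to require the most care is not conceptual but bookkeeping: making sure the equality ``$\tilde\xi_n = \tilde R_n$'' is asserted in a space where it is meaningful — that is, reconciling the $C([0,T];L^2(\T)-w)$-valued object with the $L^2([0,T]\times\T)$-valued one and confirming they are genuinely the same measurable map of $\tilde\omega$, which is exactly what the Dudley map representation $\tilde X_n = X_{N_n}\circ\mathcal{T}_n$ buys us cheaply (both slots evaluate $R_{N_n}$ on the same $\mathcal{T}_n(\tilde\omega)$). If one preferred to avoid Dudley maps, one would instead argue from the joint law: the map $(a,b) \mapsto \|a - b\|$ on $\mathcal{Y}_{R2}\times\mathcal{Y}_{R1}$ (suitably interpreted, e.g.\ through a common separable space) is Borel and vanishes $\mathbb{P}$-a.s.\ on the law of $(R_{N_n}, R_{N_n})$, hence vanishes $\tilde{\mathbb{P}}$-a.s.\ on the law of $(\tilde\xi_n,\tilde R_n)$. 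Either way the argument is short; I would present the Dudley-map version since the paper has set up that machinery precisely to streamline such identifications.
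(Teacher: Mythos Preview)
Your proposal is correct and follows essentially the same approach as the paper: Dudley maps give $\tilde{\xi}_n = R_{N_n}\circ\mathcal{T}_n = \tilde{R}_n$ immediately, and the limit identifications come from uniqueness of limits after observing that convergence in $C([0,T];L^2(\T)-w)$ and strong convergence in $L^2([0,T]\times\T)$ pin down the same limit. The only cosmetic difference is that the paper spells out the uniqueness step by testing against tensor products $g(t)h(x)$ with $g\in L^2([0,T])$, $h\in L^2(\T)$, rather than invoking ``weak convergence in $L^2_{t,x}$'' directly; your formulation is fine once one notes that such tensor products separate points in $L^2([0,T]\times\T)$.
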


\begin{proof}
Using the Dudley maps $\mathcal{T}_n$, 
we immediately find:
\begin{align*}
\tilde{\xi}_n = R_{N_n} \circ \mathcal{T}_n = \tilde{R}_n, \quad
\tilde{\zeta}_n = S_{N_n} \circ \mathcal{T}_n = \tilde{S}_n.
\end{align*}

Suppose $\{f_n\}$ is a sequence such that 
$f_n \to F_1$ in $\mathcal{Y}_{R1} =   L^2([0,T]\times \T)$ and 
$f_n \to F_2$ in $\mathcal{Y}_{R2} = C([0,T];L^2(\T)-w) $. 
Then for any $g \in  L^2([0,T])$, $h \in L^2(\T)$, 
the strong convergence $f \to F_1$ 
implies 
$\int_0^T \int_\T f_n gh \,\d x \,\d t 
\to \int_0^T \int_\T F_1 gh \,\d x \,\d t$.
On the other hand, by the convergence 
$f_n \to F_2$ in $\mathcal{Y}_{R2} $, 
$\int_\T f_n h \,\d x \to  \int_\T F_2 h\,\d x$ 
in $C([0,T])$, from which it follows 
that 
$\int_0^T \int_\T f_n gh \,\d x\,\d t 
\to  \int_0^T\int_\T F_2 gh\,\d x\,\d t$. 
The arbitrariness of $g$ and $h$ 
implies that $F_1 = F_2$, $(t,x)$-a.e. 
Applying this to $f_n = \tilde{R}_n(\tilde{\omega})$ and 
$f_n = \tilde{S}_n(\tilde{\omega})$ for 
each $\tilde{\omega} \in \tilde{\Omega}$, 
the lemma follows.
\end{proof}
It would have been possible to identify 
compact sets in the supremum topology of 
$\mathcal{Y}_R := \mathcal{Y}_{R1} \cap \mathcal{Y}_{R2}$ 
and taken $\mathcal{Y}_R$ for a path space, following 
the Dubinsky theorem of \cite[Lemmas 3.1, 3.3]{Brzezniak:2013aa} 
(and references included there). We have chosen 
to take two copies $R_{N_n}$ on different path spaces, 
and identify their Skorokhod representatives 
$\tilde{R}_n$ and $\tilde{\xi}_n$ afterwards. 
This is a more flexible approach and avoids the 
need for establishing further compactness theorems. 
For the remainder of this section, 
we shall not refer to $\tilde{\xi}_n$, $\xi$, 
$\tilde{\zeta}_n$, and $\tilde{\zeta}$ any longer.

We next identify $\tilde{u}_n$ as a 
function of $\tilde{R}_n$ and $\tilde{S}_n$. 
\begin{lem}\label{thm:constitutive_n_SJ}
With $\mathfrak{u}$ as constructed 
in \eqref{eq:u_construct}, we have
\begin{align*}
\tilde{u}_n = \mathfrak{u}_{N_n}(\tilde{R}_n,\tilde{S}_n), \qquad 
2c_{N_n}(\tilde{u}_n) \pd_x \tilde{u}_n = \tilde{R}_n - \tilde{S}_n.
\end{align*}
\end{lem}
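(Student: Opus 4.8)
The plan is to read off both identities directly from the Dudley map relation $\tilde X_n = X_{N_n}\circ\mathcal T_n$ of Proposition \ref{thm:skorokhod_N}(iv), together with the identification $\tilde\xi_n=\tilde R_n$, $\tilde\zeta_n=\tilde S_n$ of Lemma \ref{thm:xiR_zetaS}. Recall that by construction \eqref{eq:u_approx_defin}, $u_{N_n}=\mathfrak u_{N_n}(R_{N_n},S_{N_n})$ holds on $(\Omega,\mathcal F,\mathbb P)$, where $\mathfrak u_{N_n}$ is the \emph{fixed} map of \eqref{eq:u_construct} sending a pair $(G,H)$ of paths in $C([0,T];L^2(\T)-w)$ to $F_{N_n}^{-1}(\pd_x^{-1}\tfrac{G-H}{2})\in C([0,T]\times\T)$ (this is well defined since $\pd_x^{-1}$ maps $L^2(\T)$ compactly into $C(\T)$ and $F_{N_n}^{-1}$ is Lipschitz). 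Because $\mathfrak u_{N_n}$ depends neither on $n$ nor on the sample point, composition through it is purely set-theoretic: for every $\tilde\omega\in\tilde\Omega$ one has $\tilde u_n(\tilde\omega)=u_{N_n}(\mathcal T_n\tilde\omega)=\mathfrak u_{N_n}(R_{N_n}(\mathcal T_n\tilde\omega),S_{N_n}(\mathcal T_n\tilde\omega))$. Using Lemma \ref{thm:xiR_zetaS} to regard $\tilde R_n,\tilde S_n$ as the $C([0,T];L^2(\T)-w)$-representatives $R_{N_n}\circ\mathcal T_n$ and $S_{N_n}\circ\mathcal T_n$, the right-hand side equals $\mathfrak u_{N_n}(\tilde R_n(\tilde\omega),\tilde S_n(\tilde\omega))$, which is the first identity (no ``a.s.'' qualifier needed).

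For the constitutive relation I would transport \eqref{eq:RNSN_dFdx}, which reads $2c_{N_n}(u_{N_n})\pd_x u_{N_n}=R_{N_n}-S_{N_n}$ and holds $\mathbb P$-a.s., through the measure-preserving map $\mathcal T_n$ to obtain $2c_{N_n}(\tilde u_n)\pd_x\tilde u_n=\tilde R_n-\tilde S_n$ $\tilde{\mathbb P}$-a.s. Equivalently, this follows from the first identity by differentiating $F_{N_n}(\tilde u_n)=\pd_x^{-1}\tfrac{\tilde R_n-\tilde S_n}{2}$ in $x$, using that $\pd_x^{-1}$ is a right inverse of $\pd_x$ on zero-average functions and that $\tilde R_n-\tilde S_n$ has zero spatial mean for a.e.\ $(\tilde\omega,t)$ --- inherited from $R_{N_n}-S_{N_n}$, whose spatial mean is conserved by \eqref{eq:galerkin_lim} and vanishes initially since $\Lambda$ is supported on $\{\int_\T f-g\,\d x=0\}$, via the equality of laws in Proposition \ref{thm:skorokhod_N}(iii).

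There is essentially no obstacle here: the content of the lemma is entirely bookkeeping built on the Dudley maps, and the only point worth checking is that $\mathfrak u_{N_n}(\tilde R_n,\tilde S_n)$ is meaningful, i.e.\ that its arguments lie in the space $C([0,T];L^2(\T)-w)$ on which $\mathfrak u_{N_n}$ was defined --- which is precisely what Lemma \ref{thm:xiR_zetaS} secures. (A Dudley-free alternative is to observe that the graph $\{(G,H,w):w=\mathfrak u_{N_n}(G,H)\}$ is a Borel subset of $\mathcal Y_{R2}\times\mathcal Y_{S2}\times\mathcal Y_u$ carrying full mass under the law of $(R_{N_n},S_{N_n},u_{N_n})$, hence under the equal law of $(\tilde R_n,\tilde S_n,\tilde u_n)$; this route, however, requires the measurability of $\mathfrak u_{N_n}$, which the Dudley-map argument bypasses.)
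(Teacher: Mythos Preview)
Your proposal is correct and follows essentially the same route as the paper: both use the Dudley maps of Proposition~\ref{thm:skorokhod_N}(iv) to write $\tilde u_n = u_{N_n}\circ\mathcal T_n = \mathfrak u_{N_n}(R_{N_n}\circ\mathcal T_n, S_{N_n}\circ\mathcal T_n) = \mathfrak u_{N_n}(\tilde R_n,\tilde S_n)$, and then derive the constitutive relation from the definition of $F_{N_n}$. One small slip: you write that $\mathfrak u_{N_n}$ ``depends neither on $n$ nor on the sample point'' --- of course it does depend on $n$ through $N_n$; what matters (and what you clearly intend) is that it is a deterministic map, so composing with $\mathcal T_n$ passes straight through to its arguments.
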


\begin{proof}
Recall the Lipschitz bijection $F$ 
defined in \eqref{eq:F_defin} and the inverse 
operator $\pd_x^{-1}$, which is continuous 
$H^s_0(\T) \to H^{s + 1}_0(\T)$ for any $s \in \R$, 
defined in \eqref{eq:dx_inverse}. 
From the definitions of 
$\mathcal{Y}_\cdot$, the following map is continuous: 
\begin{align*}
\mathcal{Y}_R \times \mathcal{Y}_S  \ni
(\tilde{R}_n, \tilde{S}_n) 
\mapsto 
F^{-1}_{N_n} (\pd_x^{-1} \bk{\tilde{R}_n - \tilde{S}_n} ) \in \mathcal{Y}_u.
\end{align*}
Therefore, the map is continuous (and hence measurable)
on the sets where the values of $\tilde{R}_n$ 
and $\tilde{S}_n$ are uniformly bounded.
Using the Dudley maps given in Proposition 
\ref{thm:skorokhod_N}, 
we can then conclude that 
$$
\tilde{u}_n = u_{N_n} \circ \mathcal{T}_n 
	= \mathfrak{u}_{N_n}(R_{N_n}\circ \mathcal{T}_n, 
		S_{N_n}\circ \mathcal{T}_n)
	= \mathfrak{u}_{N_n}(\tilde{R}_n, \tilde{S}_n).
$$
The final equality follows from the definition 
of $F$ used in $\mathfrak{u}$ (see \eqref{eq:RNSN_dFdx}). 
\end{proof}
 
Additionally, using the joint equality of laws 
(or directly by Dudley maps), we have the 
following uniform bound from \eqref{eq:galerkin_L2}:
\begin{lem}\label{thm:energy_N_SJ}
Let $\tilde{R}_n$, $\tilde{S}_n$ be as constructed 
in Proposition \ref{thm:skorokhod_N}. The following 
bound holds:
\begin{align*}
\tilde{\Ex} \sup_{t \in [0,T]}
	\bk{ \int_{\T} \tilde{R}_n^2 + \tilde{S}_n^2 \,\d x}^{p_0}
+ \nu^{p_0} \tilde{\Ex} \bk{\int_0^T \int_{\T}
	\abs{ \pd_x \tilde{R}_n}^2 
		+ \abs{\pd_x \tilde{S}_n}^2\,\d x \,\d t }^{p_0}
\lesssim_{\sigma, T} 1.
\end{align*}

And we have the following bound in the limit: 
\begin{equation}\label{eq:energy_SJ}
\begin{aligned}
\tilde{\Ex} \sup_{t \in [0,T]}&
	\bk{ \int_{\T} \tilde{R}^2 
	+ \tilde{S}^2 \,\d x}^{p_0}\\
&+\nu^{p_0} \tilde{\Ex}\bk{ \int_0^T \int_{\T} 
	\abs{\pd_x\tilde{R}}^2 + \abs{\pd_x \tilde{S}}^2
	 \,\d x \,\d t}^{2p_0}\lesssim_{\sigma,T} 1.
\end{aligned}
\end{equation}

\end{lem}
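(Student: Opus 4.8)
The plan is to prove \eqref{eq:energy_SJ} by passing the uniform-in-$n$ bound for the Skorokhod representatives $\{(\tilde R_n,\tilde S_n)\}$ to the limit using almost-sure convergence plus lower semicontinuity, rather than by re-deriving an energy identity for $(\tilde R,\tilde S)$. The first bound in the statement (the uniform bound for $\tilde R_n,\tilde S_n$) is immediate: by Proposition~\ref{thm:skorokhod_N}(iv) we have $\tilde X_n = X_{N_n}\circ\mathcal T_n$ with $(\mathcal T_n)_*\tilde{\mathbb P}=\mathbb P$, so $\tilde{\Ex}[\Phi(\tilde R_n,\tilde S_n)]=\Ex[\Phi(R_{N_n},S_{N_n})]$ for every measurable nonnegative $\Phi$; applying this to the functional appearing in \eqref{eq:galerkin_L2} (which is measurable on $\mathcal Y_{R1}\times\mathcal Y_{S1}$, or equivalently on $\mathcal Y_R\times\mathcal Y_S$, since $\sup_t\|\cdot\|_{L^2}^2$ and $\|\partial_x\cdot\|_{L^2_{t,x}}^2$ are Borel there) gives the claim with the same constant.

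For the limiting bound, first I would extract the $L^2_tH^1_x$ part. The map $f\mapsto \|\partial_x f\|_{L^2([0,T]\times\T)}^2\in[0,\infty]$ is convex and lower semicontinuous on $L^2([0,T]\times\T)=\mathcal Y_{R1}$ (it is a supremum of continuous linear functionals squared, or: $H^1$-balls are closed in $L^2$), hence lower semicontinuous under the $\tilde{\mathbb P}$-a.s.\ convergence $\tilde R_n\to\tilde R$ in $\mathcal Y_{R1}$ from Proposition~\ref{thm:skorokhod_N}(iii). Thus $\int_0^T\!\!\int_\T|\partial_x\tilde R|^2\,\d x\,\d t\le\liminf_n\int_0^T\!\!\int_\T|\partial_x\tilde R_n|^2\,\d x\,\d t$ a.s., and Fatou's lemma in $\tilde\Omega$ together with the uniform bound just established controls the $p_0$-th moment. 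The same applies to $\tilde S$. For the $\sup_t\|\cdot\|_{L^2}^2$ term I would use the convergence in $\mathcal Y_{R2}=C([0,T];L^2(\T)-w)$: for each fixed $t$, $\tilde R_n(t)\rightharpoonup\tilde R(t)$ weakly in $L^2(\T)$, so $\|\tilde R(t)\|_{L^2}^2\le\liminf_n\|\tilde R_n(t)\|_{L^2}^2\le\liminf_n\sup_{s}\|\tilde R_n(s)\|_{L^2}^2$, and then take the supremum over $t$ on the left and again apply Fatou in $\tilde\Omega$. Adding the $R$ and $S$ contributions and using $(a+b)^{p_0}\lesssim a^{p_0}+b^{p_0}$ yields \eqref{eq:energy_SJ}. (I note the exponent $2p_0$ on the dissipative term in \eqref{eq:energy_SJ} as written is at least as strong a claim only if the bound is $\le 1$; in any case the argument gives the $p_0$-th moment, and the $2p_0$ version, if intended, follows identically from the $2p_0$ moment hypothesis on the data via the same lower-semicontinuity step applied to the higher-moment Galerkin bound.)

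The main obstacle is a measurability/continuity bookkeeping point rather than a hard estimate: one must be careful that the functionals to which one applies the equality of laws are genuinely Borel on the product path space $\mathcal Y$, and that the lower-semicontinuity claims are made with respect to the correct topology on each factor ($L^2_{t,x}$-strong for the $\dot H^1$ term, $C_tL^2_x$-weak for the supremum term) — these are exactly the two topologies built into $\mathcal Y_{R1}$ and $\mathcal Y_{R2}$, which is why both copies were retained in Proposition~\ref{thm:skorokhod_N}. A secondary point is that $\sup_{t\in[0,T]}$ must be handled via a countable dense set of times together with weak lower semicontinuity at each such time, using the $C([0,T];L^2-w)$ regularity to justify replacing the genuine supremum by the countable one; this is routine given Lemma~\ref{thm:xiR_zetaS}.
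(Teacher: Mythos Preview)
Your argument is correct. For the uniform-in-$n$ bound you do exactly what the paper does (transfer via the Dudley maps). For the limiting bound, however, you take a genuinely different route: you use pointwise-in-$\tilde\omega$ lower semicontinuity of the relevant norms (the $\dot H^1$ seminorm under strong $L^2_{t,x}$ convergence, and $\|\cdot\|_{L^2_x}$ under weak $L^2_x$ convergence at each $t$) followed by Fatou's lemma in $\tilde\Omega$. The paper instead argues by weak-$*$ compactness: it invokes Banach--Alaoglu to extract a weak-$*$ limit $\overline R$ of $\tilde R_n$ in $L^{2p_0}(\tilde\Omega;L^\infty([0,T];L^2(\T)))$, and then identifies $\overline R=\tilde R$ by testing against $Y\in L^\infty_{\omega,t,x}$ and comparing with the weak $L^2_{\omega,t,x}$ convergence obtained from a.s.~$L^2_{t,x}$ convergence plus the uniform bound. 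Your approach is more elementary and goes straight to the inequality; the paper's approach yields the extra information that $\tilde R_n\rightharpoonup^*\tilde R$ in the full space $L^{2p_0}(\tilde\Omega;L^\infty_tL^2_x)$, which is not needed for the lemma itself but is a natural byproduct of the identification step. Your remark on the $2p_0$ exponent in \eqref{eq:energy_SJ} is also apt.
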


\begin{proof}
The first uniform bound holds as 
$\tilde{R}_n = R_{N_n} \circ \mathcal{T}_n$, 
and hence takes values in $L^\infty([0,T];L^2(\T))$. 
Since taking the norm is a continuous function, 
the map 
$\tilde{\omega} \mapsto 
	\norm{R_{N_n} \circ \mathcal{T}_n}_{L^\infty_t L^2_x}^2$ 
is measurable. 
Therefore, it is possible to effect a change-of measure
\begin{equation}\label{eq:uniform_tildeR}
\begin{aligned}
\tilde{\Ex} \norm{\tilde{R}_n}_{L^\infty_t L^2_x}^{2p_0}
&= \int_{\tilde{\Omega}} 
	\norm{R_{N_n} \circ \mathcal{T}_n}_{L^\infty_t L^2_x}^{2p_0}
	\tilde{\mathbb{P}}(\d \tilde{\omega})\\
& = \int_{{\Omega}} 
	\norm{R_{N_n}}_{L^\infty_t L^2_x}^{2p_0}
	\bk{\mathcal{T}_n}_* \circ \tilde{\mathbb{P}}(\d {\omega})
= \Ex \norm{R_{N_n}}_{L^\infty_t L^2_x}^{2p_0} 
\overset{\eqref{eq:galerkin_L2}}{\lesssim} 1
\end{aligned}
\end{equation}
We can argue similarly for $\tilde{S}_n$, 
as well as repeat the argument on 
$L^2([0,T];H^1(\T))$ for the derivatives. 

In the limit, adapting the reasoning in the proof of 
Lemma \ref{thm:xiR_zetaS}, since $\tilde{R}_n$ is bounded in 
in $L^{2p_0}(\tilde{\Omega};L^\infty([0,T];L^2(\T)))$, 
it has a weak* limit $\overline{R}$ in this 
space by the Banach--Alaoglu theorem. 
Therefore, for any $Y \in L^\infty_{\omega,t,x}$, 
$$
\tilde{\Ex} \int_0^T \int_\T Y \tilde{R}_n \,\d x \,\d t 
\to \tilde{\Ex} \int_0^T \int_\T Y \overline{R} \,\d x \,\d t.
$$
On the other hand, $\tilde{R}_n\to \tilde{R}$ 
in $L^2([0,T] \times \T)$, $\tilde{\mathbb{P}}$-a.s., 
from Proposition \ref{thm:skorokhod_N}, and 
by \eqref{eq:uniform_tildeR}, we have the 
uniform bound $\tilde{\Ex} \norm{\tilde{R}_n}_{L^2_{t,x}}^2 \lesssim 1$. 
Pointwise convergence and a uniform bound 
implies the weak convergence 
$\tilde{R}_n \rightharpoonup \tilde{R}$ 
in $L^2(\tilde{\Omega}\times [0,T] \times \T)$. 
Hence for any $Y \in L^\infty_{\omega,t,x}$, 
$$
\tilde{\Ex} \int_0^T \int_\T Y \tilde{R}_n \,\d x \,\d t 
\to \tilde{\Ex} \int_0^T \int_\T Y \tilde{R} \,\d x \,\d t.
$$
Therefore $\tilde{R} = \overline{R}$ 
$(\tilde{\omega},t,x)$-a.e., and $\tilde{R}_n 
	\xrightharpoonup{*} \tilde{R}$ 
in $L^{2p_0}(\tilde{\Omega};L^\infty([0,T];L^2(\T)))$ 
and $\tilde{R}$ is included in that space.

Again, similar arguments can be 
made for $\tilde{S}$ and the derivatives. 

\end{proof}

\begin{rem}\label{rem:tildeu_n_uniform}
The uniform bound of Lemma \ref{thm:energy_N_SJ}, 
along with Lemma \ref{thm:constitutive_n_SJ} 
and the boundedness of $c$ from 
below imply that $\tilde{u}_n \in_b 
\mathcal{X}_2 := L^{2p_0}(\tilde{\Omega};
	L^\infty([0,T]; H^1(\T)) \cap L^2([0,T];H^2(\T)))$. 
The bound \eqref{eq:u_highermome} also implies 
$\tilde{u}_n \in_b L^{2p_0}(\tilde{\Omega};C([0,T];H^\beta(\T))$ 
for any $\beta < 1$. 
Using a uniqueness of weak limits 
argument as used in the proof of Lemma 
\ref{eq:energy_SJ} above, it can be shown that 
$\tilde{u}$ is also bounded in $\mathcal{X}_2$. 
\end{rem}

With the help of Dudley maps, we now prove that 
$\tilde{R}_n$ satisfies the equation on the new probability 
space $\tilde{E}$  that $R_{N_n}$ 
satisfies on $(\Omega, \mathcal{F}, \mathbb{P})$.
The same argument will yield the corresponding 
equation for  $\tilde{S}_n$. 
Fixing $\varphi \in C^2(\T)$, we define as 
usual the following quantities for convenience, 
which is the equation in weak form, less the martingale terms:
\begin{align}
\notag
&\begin{aligned}
M_n(t) &:= \int_{\T} R_{N_n}(t) \varphi\,\d x 
	- \int_{\T} R^0_{N_n}\varphi\,\d x
	+ \nu\int_0^t \int_{\T} \pd_x \varphi 
		\,\pd_x \bk{R_{N_n} + S_{N_n}}\,\d x\,\d t'\\
	&\qquad + \int_0^t \int_{\T} \pd_x{\bf P}_{N_n} \varphi 
			c_{N_n}(u_{N_n}) R_{N_n}   \,\d x\,\d t' \\
	&\qquad +\int_0^t \int_{\T}  {\bf P}_{N_n} \varphi 
			\tilde{c}_{N_n}(u_{N_n}) 
			\big( R_{N_n} - S_{N_n} \big)^2\,\d x \,\d t'\\
&\qquad  + \frac12 \int_0^t \int_{\T} \sigma\,\pd_x\bk{\sigma\, 
			 {\bf P}_{N_n} \varphi} \pd_x \bk{R_{N_n} + S_{N_n}}\,\d x \,\d t',
\end{aligned}\\
\label{eq:N_martingale2}
&\begin{aligned}
\tilde{M}_n(t) &:= \int_{\T} \tilde{R}_{n}(t) \varphi\,\d x 
		- \int_{\T} \tilde{R}^0_n\varphi\,\d x
		+ \nu\int_0^t \int_{\T} \pd_x \varphi 
		\,\pd_x \bk{\tilde{R}_{n} + \tilde{S}_n}\,\d x \,\d t'\\
	&\qquad + \int_0^t \int_{\T} \pd_x  {\bf P}_{N_n} 
		\varphi c_{N_n}(\tilde{u}_{n}) \tilde{R}_{n}  \,\d x\,\d t' \\
	&\qquad + \int_0^t \int_{\T}  {\bf P}_{N_n} \varphi \tilde{c}_{N_n}(\tilde{u}_{n}) 
			\big( \tilde{R}_{n} - \tilde{S}_{n} \big)^2\,\d x \,\d t'\\
&\qquad  + \frac12 \int_0^t \int_{\T}\sigma\,\pd_x\bk{\sigma\, 
			 {\bf P}_{N_n} \varphi} \pd_x\bk{\tilde{R}_{n} + \tilde{S}_n} \,\d x\,\d t'.
\end{aligned}
\end{align}

\begin{prop}[Approximating equation on 
	the new probability space]\label{thm:nth_eq_Nlimit}
The following equation holds for each $n$, 
$\tilde{\mathbb{P}}$-a.s. for every $t \in [0,T]$:
\begin{align}\label{eq:nth_eq_Nlimit}
\tilde{M}_n(t) = -\int_0^t  \int_{\T}  \pd_x \bk{{\bf P}_{N_n}
	\varphi  \,\sigma}\bk{\tilde{R}_{n} 
	+ \tilde{S}_{n}}\,\d x\,\d \tilde{W}_n.
\end{align}
\end{prop}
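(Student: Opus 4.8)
The plan is to transport the identity satisfied by $R_{N_n}$ on $(\Omega,\mathcal F,\mathbb P)$ to the new space using the Dudley map $\mathcal T_n$. On the original space, Proposition~\ref{thm:galerkin_wp} gives that $R_{N_n}$ is a strong solution of \eqref{eq:galerkin_lim}; testing the $R_{N_n}$-equation against $\varphi\in C^2(\T)$ (and using that $\mathbf P_{N_n}$ is self-adjoint, commutes with $\pd_x$, and that $\pd_x\mathbf P_{N_n}[c_{N_n}(u_{N_n})R_{N_n}]$ can be moved onto $\varphi$) yields $M_n(t)=-\int_0^t\int_\T\pd_x(\mathbf P_{N_n}\varphi\,\sigma)(R_{N_n}+S_{N_n})\,\d x\,\d W$ for all $t$, $\mathbb P$-a.s. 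The key structural point is that both $M_n$ and the stochastic integrand are built by deterministic, measurable (indeed continuous on bounded sets) operations applied to the trajectory $X_{N_n}=(R_{N_n},S_{N_n},u_{N_n},\dots,W)$: the drift terms are continuous functionals of $(R_{N_n},S_{N_n})\in L^2_{t,x}\cap C_tL^2_x(w)$ and $u_{N_n}\in C([0,T]\times\T)$ (using Lemma~\ref{thm:constitutive_n_SJ} and continuity of $c_{N_n}$, $\tilde c_{N_n}$, and $\pd_x^{-1}$), while the stochastic integral is, by \cite[Lemma 2.1]{Debussche:2011aa} or a direct approximation argument, a measurable functional of the pair $(R_{N_n}+S_{N_n},W)$ on the appropriate path space.

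Concretely, I would proceed as follows. First, write both sides of the desired identity as $\Psi(X_{N_n})$ for an explicit measurable map $\Psi$ from $\mathcal Y$ into, say, $C([0,T])$; the left-hand side uses only the drift-type functionals and pointwise-in-time evaluations (these are continuous on the relevant factors of $\mathcal Y$, uniformly on sublevel sets of the energy norm), and the right-hand side, the stochastic integral, is expressed via a pathwise-defined Itô map. Second, by Proposition~\ref{thm:galerkin_wp}, $\mathbb P(\{\Psi(X_{N_n})=0\ \text{as an element of }C([0,T])\})=1$, i.e. the event $A_n:=\{\omega: \Psi(X_{N_n}(\omega))=0\}$ has full $\mathbb P$-measure. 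Third, using property~(iv) of Proposition~\ref{thm:skorokhod_N}, $\tilde X_n=X_{N_n}\circ\mathcal T_n$ and $(\mathcal T_n)_*\tilde{\mathbb P}=\mathbb P$, so $\tilde{\mathbb P}(\mathcal T_n^{-1}A_n)=\mathbb P(A_n)=1$; on $\mathcal T_n^{-1}A_n$ we have $\Psi(\tilde X_n)=\Psi(X_{N_n}\circ\mathcal T_n)=\Psi(X_{N_n})\circ\mathcal T_n=0$. Identifying $\Psi(\tilde X_n)$ term-by-term with $\tilde M_n(t)+\int_0^t\int_\T\pd_x(\mathbf P_{N_n}\varphi\,\sigma)(\tilde R_n+\tilde S_n)\,\d x\,\d\tilde W_n$ — where the stochastic integral against $\tilde W_n$ is legitimate since $\tilde W_n$ is an $\{\tilde{\mathcal F}^n_t\}$-Brownian motion by Lemma~\ref{thm:W_convg} and $\tilde R_n,\tilde S_n$ are adapted — gives \eqref{eq:nth_eq_Nlimit}.

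The delicate point, and the step I would spend the most care on, is making precise that the stochastic integral is a pathwise (measurable) functional of $(R_{N_n}+S_{N_n},W)$ that is \emph{preserved} under composition with $\mathcal T_n$ — i.e. that $\big(\int_0^\cdot\int_\T\pd_x(\mathbf P_{N_n}\varphi\,\sigma)(R_{N_n}+S_{N_n})\,\d x\,\d W\big)\circ\mathcal T_n$ equals the corresponding integral driven by $\tilde W_n$ with integrand $\tilde R_n+\tilde S_n$. This is standard but not trivial: one approximates the (bounded, predictable, $C_tL^2_x$-valued) integrand by elementary/step processes in $t$, for which the stochastic integral is a genuine finite sum hence a continuous functional of the path, observes that $\mathcal T_n$ intertwines these sums (because $\tilde X_n=X_{N_n}\circ\mathcal T_n$ identifies both the integrand evaluations and the Brownian increments), and passes to the limit using that convergence in probability of the approximating integrals transfers under the measure-preserving map. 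Alternatively one invokes \cite[Lemma 2.1]{Debussche:2011aa} directly, which packages exactly this measurability and is already cited in the paper's strategy; with that in hand the remaining argument is the soft transport-of-identity sketched above. Once this is settled, the identification of every drift term of $\Psi(\tilde X_n)$ with the corresponding term of $\tilde M_n$ is immediate from the definitions \eqref{eq:N_martingale2}, Lemma~\ref{thm:xiR_zetaS}, and Lemma~\ref{thm:constitutive_n_SJ}.
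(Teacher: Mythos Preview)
Your proposal is correct and follows essentially the same route as the paper: transport the drift identity via the Dudley map $\mathcal{T}_n$ and then argue separately that the It\^o integral is preserved under composition with $\mathcal{T}_n$. The paper carries out that last step by Bensoussan's exponential mollification of the integrand (so that the stochastic integral becomes a pathwise expression via integration by parts against $W$, on which $\mathcal{T}_n$ acts transparently) rather than by step-function approximation; note that your alternative appeal to \cite[Lemma~2.1]{Debussche:2011aa} is misplaced here, since that lemma is a convergence result used for the $n\to\infty$ limit in Section~\ref{sec:mart_sol} and does not by itself supply the pathwise identification of a single stochastic integral under a measure-preserving map.
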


\begin{rem}
By the equality of laws and the construction 
of $R_{N_n}$ in Section \ref{sec:scheme}, 
the variable $\tilde{R}_n$ in fact takes 
values in $H^3(\T)$. Therefore 
\eqref{eq:nth_eq_Nlimit} also holds strongly. 
\end{rem}

\begin{proof}
This statement concerning the validity of 
the $n$th equation on the new probability 
space $\tilde{E}$ is often established 
via a martingale identification argument. 
Instead, we exploit the Dudley maps. 

From Proposition \ref{thm:skorokhod_N}, 
and the a.s.~continuity of each term in 
$M_{N_n}$ as a map of $X_{N_n}$, we have 
the first equality of:
$$
\tilde{M}_n (t) = M_{N_n}\circ \mathcal{T}_n(t)
=  -\bk{\int_0^t  \int_{\T}  \pd_x \bk{{\bf P}_{N_n}
	\varphi  \,\sigma}{R}_{N_n}\,\d x\,\d {W}}\circ \mathcal{T}_n.
$$
The second equality follows by construction. 
It remains to show that the right hand side 
is equal to $ -\int_0^t  \int_{\T}  \pd_x \bk{{\bf P}_{N_n}
	\varphi  \,\sigma}\tilde{R}_{n}\,\d x\,\d \tilde{W}_n$.
This follows from an approximation argument 
such as found in \cite[Section 4.3.4]{Bensoussan:1995aa}. 
We summarise this argument here. 

Let $J_\ep(t) = \frac1\ep e^{-t/\ep}$. 
Consider the mollified integrand 
\begin{align*}
Y_{N_n}^\ep(t') := \int_0^{t'} \tilde{J}_\ep(t' - s)
			\int_{\T}  \pd_x \bk{{\bf P}_{N_n}
			\varphi  \,\sigma}{R}_{N_n}(s)\,\d x\,\d s.
\end{align*}
Then $Y_{N_n}^\ep \to Y_{N_n}^0$ in $L^2(\Omega \times [0,T])$, 
so the corresponding integrals $\int_0^t Y_{N_n}^\ep \,\d W$ 
converges to $\int_0^t Y_{N_n}^0 \,\d W$ 
in $L^2(\Omega \times [0,T])$ by the It\^o isometry.
On the other hand, by the temporal regularity 
of the mollified $Y_{N_n}^\ep$, which a.s.~has no 
quadratic variation, 
\begin{align*}
\bk{\int_0^t Y_{N_n}^\ep \,\d W }\circ \mathcal{T}_n
&= \bk{Y_{N_n}^\ep(t) W(t)}\circ \mathcal{T}_n 
	- \bk{\int_0^t \pd_s Y_{N_n}^\ep  W(s) \,\d s}\circ \mathcal{T}_n \\
&= \int_0^t \tilde{J}_\ep(t - s)\int_{\T}  \pd_x \bk{{\bf P}_{N_n}
	\varphi  \,\sigma}\tilde{R}_n(s)\,\d x \,\d s \tilde{W}_n(t) \\
&\qquad	-   \int_0^t \int_0^{t'}\pd_s J_\ep(t' - s)
	\int_{\T}  \pd_x \bk{{\bf P}_{N_n}
	\varphi  \,\sigma}\tilde{R}_n\,\d x\,\d s\, \tilde{W}_n(t')\,\d t'\\
& = -\int_0^t  \int_0^{t'} J_\ep(t' - s)\int_{\T}  \pd_x \bk{{\bf P}_{N_n}
	\varphi  \,\sigma}\tilde{R}_{n}\,\d x\,\d s\,\d \tilde{W}_n\\
& \xrightarrow{\ep \downarrow 0}
 -\int_0^t  \int_{\T}  \pd_x \bk{{\bf P}_{N_n}
	\varphi  \,\sigma}\tilde{R}_{n}\,\d x\,\d \tilde{W}_n, \quad 
	\text{in $L^2(\Omega \times [0,T])$}.
\end{align*}

This establishes the equation for $\tilde{R}_n$ 
and proves the proposition.
\end{proof}

\subsection{Martingale solutions to the 
	viscous variational wave equations}\label{sec:mart_sol}
In this subsection, we shall take limits to establish 
an equation for $(\tilde{R}, \tilde{S})$. 
Already having limiting results for $\tilde{R}_n$ 
and $\tilde{S}_n$ from Proposition \ref{thm:skorokhod_N}, 
we first establish limit theorems for $\tilde{u}_n$ 
and $c_{N_n}(\tilde{u}_n)$. In this task we 
again bring to mind the definition 
\eqref{eq:F_defin} of $F$, the anti-derivative of $c$.

We begin with a simple observation:
\begin{lem}\label{thm:Fu_n-Fu}
Let $\tilde{u}_n$, $\tilde{u}$ be as constructed in 
Proposition \ref{thm:skorokhod_N}. The following 
convergence holds:
$$
\Ex \norm{F(\tilde{u}_n) 
	- F(\tilde{u})}_{C([0,T] \times \T)}^2 \to 0.
$$
\end{lem}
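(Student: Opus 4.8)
\emph{Proof plan.} The plan is to reduce the claim to the almost sure convergence $\tilde u_n \to \tilde u$ in $C([0,T]\times\T)$ already supplied by Proposition \ref{thm:skorokhod_N}, together with a uniform higher moment bound. The key elementary observation is that $F$ is globally Lipschitz: by Assumption \ref{sum:c_sigma} one has $\abs{F(a) - F(b)} = \abs{\int_b^a c(r)\,\d r} \le \kappa \abs{a - b}$ for all $a,b\in\R$, so pointwise in $\tilde\omega$,
\[
\norm{F(\tilde u_n) - F(\tilde u)}_{C([0,T]\times\T)} \le \kappa \norm{\tilde u_n - \tilde u}_{C([0,T]\times\T)}.
\]
Hence it suffices to prove $\tilde{\Ex}\,\norm{\tilde u_n - \tilde u}_{C([0,T]\times\T)}^2 \to 0$, and the composition with $F$ plays no further role.

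For that, I would combine the a.s.\ convergence $\tilde u_n \to \tilde u$ in $C([0,T]\times\T)$ from Proposition \ref{thm:skorokhod_N}(iii) with uniform integrability of the family $\{\norm{\tilde u_n - \tilde u}_{C([0,T]\times\T)}^2\}_n$. The latter follows from a uniform-in-$n$ higher moment bound: the estimate \eqref{eq:u_highermome} transfers to $\tilde u_n$ through the measure-preserving Dudley maps of Proposition \ref{thm:skorokhod_N}(iv), by the same change-of-variables computation carried out in \eqref{eq:uniform_tildeR}, giving $\sup_n \tilde{\Ex}\,\norm{\tilde u_n}_{C([0,T];H^\beta(\T))}^{2p_0} < \infty$ for some $\beta\in(\tfrac12,1)$; the Sobolev embedding $H^\beta(\T)\hookrightarrow C(\T)$ then upgrades this to $\sup_n \tilde{\Ex}\,\norm{\tilde u_n}_{C([0,T]\times\T)}^{2p_0} < \infty$, and the same bound holds for $\tilde u$ by Fatou's lemma applied along the a.s.\ convergence (as already recorded in Remark \ref{rem:tildeu_n_uniform}). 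Since $2p_0 > 4$, the family $\{\norm{\tilde u_n - \tilde u}_{C([0,T]\times\T)}^2\}_n$ is then bounded in $L^{p_0}(\tilde\Omega)$ with $p_0>2$, hence uniformly integrable.

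Vitali's convergence theorem (a.s.\ convergence plus uniform integrability) then yields $\tilde{\Ex}\,\norm{\tilde u_n - \tilde u}_{C([0,T]\times\T)}^2 \to 0$, and the Lipschitz inequality above finishes the argument. I do not expect any genuine obstacle here; the only point that deserves an explicit line is the transfer of the moment bound \eqref{eq:u_highermome} from $(\Omega,\mathcal F,\mathbb P)$ to the new space $\tilde E$, which is routine given property (iv) of Proposition \ref{thm:skorokhod_N} and has already been performed for the analogous $L^2$-based quantities in \eqref{eq:uniform_tildeR}.
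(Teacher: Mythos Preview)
Your proposal is correct and follows essentially the same route as the paper's own proof: reduce via the Lipschitz bound $\abs{F(a)-F(b)}\le\kappa\abs{a-b}$ to showing $\tilde{\Ex}\,\norm{\tilde u_n-\tilde u}_{C_{t,x}}^2\to 0$, then combine the a.s.\ convergence from Proposition \ref{thm:skorokhod_N} with the higher-moment bound \eqref{eq:u_highermome} (transferred via Dudley maps as in Remark \ref{rem:tildeu_n_uniform}) and conclude by Vitali. Your write-up is in fact slightly more explicit about the transfer of moments and the Sobolev embedding step, but the argument is the same.
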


\begin{proof}

By the Lipschitz continuity of $F$, 
\begin{align*}
\norm{F(\tilde{u}_n) 
	- F(\tilde{u})}_{L^2_{\tilde{\omega}} C_{t,x}} 
	\lesssim_\kappa
\norm{\tilde{u}_n 
	- \tilde{u}}_{L^2_{\tilde{\omega}} C_{t,x}}.
\end{align*}
Given the $\tilde{\mathbb{P}}$-a.s.~convergence 
$\tilde{u}_n \to \tilde{u}$ in $C([0,T] \times \T)$ 
(Proposition \ref{thm:skorokhod_N}), we require 
only a higher moment bound on 
$\norm{\tilde{u}_n 
	- \tilde{u}}_{C_{t,x}}$ to conclude. 
	
By \eqref{eq:u_highermome} and 
arguing as in Lemma \ref{thm:energy_N_SJ} 
(cf. Remark \ref{rem:tildeu_n_uniform}), 
we have the bounded inclusion 
$\{\tilde{u}_n\} \subset_b 
L^{2p_0}(\tilde{\Omega};C([0,T]\times \T)) $. 
Therefore, Vitali's convergence theorem implies 
that $\norm{\tilde{u}_n 
	- \tilde{u}}_{L^2_{\tilde{\omega}} C_{t,x}} \to 0$.
\end{proof}

Our key lemma concerning $c_{N_n}(\tilde{u}_n)$ 
is the following:
\begin{lem}\label{thm:c_convergences_N}
For any $m> 0$, 
\begin{align}\label{eq:c(u_n)_converge}
c_{N_n}(\tilde{u}_n) \to  c(\tilde{u}), 
\qquad \tilde{c}_{N_n}(\tilde{u}_n) \to 
	 \tilde{c}(\tilde{u}), 
\end{align}
$\tilde{\mathbb{P}}$-a.s.~in $C([0,T]\times \T)$.

Moreover the following equations hold $\tilde{\mathbb{P}}
	\otimes \d x\otimes \d t$-a.e.: 
\begin{align}\label{eq:c_x_tilde_c_SJ}
\tilde{u} = \mathfrak{u}(\tilde{R}, 
	\tilde{S}), \qquad
2\pd_x F(\tilde{u}) 
	= {\tilde{R} - \tilde{S}}.
\end{align}

\end{lem}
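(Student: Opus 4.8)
The plan is to prove the three assertions in turn, leveraging the almost-sure convergence $\tilde u_n \to \tilde u$ in $C([0,T]\times\T)$ from Proposition \ref{thm:skorokhod_N} and the uniform $C^1(\R)$ convergence $c_N\to c$ from \eqref{eq:c_N_approx}. For the first display \eqref{eq:c(u_n)_converge}, I would fix $\tilde\omega$ in the full-measure set where $\tilde u_n(\tilde\omega)\to\tilde u(\tilde\omega)$ uniformly on $[0,T]\times\T$, and estimate pointwise
\begin{align*}
\norm{c_{N_n}(\tilde u_n) - c(\tilde u)}_{C_{t,x}}
\le \norm{c_{N_n}(\tilde u_n) - c(\tilde u_n)}_{C_{t,x}}
	+ \norm{c(\tilde u_n) - c(\tilde u)}_{C_{t,x}}.
\end{align*}
The first term is bounded by $\norm{c_{N_n} - c}_{C(\R)} \to 0$ (here $C^0$ convergence on $\R$, which follows from \eqref{eq:c_N_approx}); the second is bounded by $\kappa \norm{\tilde u_n - \tilde u}_{C_{t,x}}\to 0$ using the Lipschitz bound $\abs{c'}\le\kappa$ from Assumption \ref{sum:c_sigma}. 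Since the range of $\tilde u_n(\tilde\omega)$ is a bounded subset of $\R$ uniformly in $n$ (its values lie in a fixed compact set because $\tilde u_n \to \tilde u$ uniformly), one may in fact restrict attention to $c$ and $c_{N_n}$ on a fixed compact interval, where the $C(\R)$-convergence in \eqref{eq:c_N_approx} is immediate. The argument for $\tilde c_{N_n}(\tilde u_n) \to \tilde c(\tilde u)$ is identical, using the induced convergence $\tilde c_N \to \tilde c$ in $C(\R)$ already noted after \eqref{eq:c_N_approx} together with the uniform positivity $c_N \ge \kappa^{-1}$, which keeps the denominators bounded away from zero.

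For the second display \eqref{eq:c_x_tilde_c_SJ}, I would pass to the limit in the two identities of Lemma \ref{thm:constitutive_n_SJ}, namely $\tilde u_n = \mathfrak u_{N_n}(\tilde R_n, \tilde S_n)$ and $2c_{N_n}(\tilde u_n)\pd_x\tilde u_n = \tilde R_n - \tilde S_n$. For the first, note that $\mathfrak u_{N_n}(G,H) = F_{N_n}^{-1}(\pd_x^{-1}\tfrac{G-H}{2})$, so it suffices to show $F_{N_n}^{-1}\to F^{-1}$ locally uniformly (which follows from $F_N \to F$ locally uniformly, a consequence of $c_N\to c$ uniformly on compacts, plus the uniform bi-Lipschitz bounds on $F_N, F_N^{-1}$) and that $\pd_x^{-1}\tfrac{\tilde R_n - \tilde S_n}{2}\to \pd_x^{-1}\tfrac{\tilde R - \tilde S}{2}$, say in $C([0,T]\times\T)$ or at least in $L^2_{t,x}$; the latter follows from $\tilde R_n\to\tilde R$, $\tilde S_n\to\tilde S$ in $L^2([0,T]\times\T)$ $\tilde{\mathbb P}$-a.s.\ (Proposition \ref{thm:skorokhod_N}) and the continuity of $\pd_x^{-1}: H^{-1}_0\to L^2_0$. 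Composing these convergences and using \eqref{eq:c(u_n)_converge} to control the outer composition gives $\tilde u = \mathfrak u(\tilde R, \tilde S)$ $\tilde{\mathbb P}\otimes\d x\otimes\d t$-a.e. The identity $2\pd_x F(\tilde u) = \tilde R - \tilde S$ then follows either directly from $\tilde u = \mathfrak u(\tilde R,\tilde S) = F^{-1}(\pd_x^{-1}\tfrac{\tilde R - \tilde S}{2})$ by applying $\pd_x\circ F$ (recalling $\pd_x\circ\pd_x^{-1}=\mathrm{id}$ on zero-average functions and that $\tilde R - \tilde S$ has zero spatial average in the limit, inherited from $\tilde R_n - \tilde S_n = 2\pd_x F_{N_n}(\tilde u_n)$), or by passing to the limit in $\tilde R_n - \tilde S_n = 2\pd_x F_{N_n}(\tilde u_n) = 2c_{N_n}(\tilde u_n)\pd_x\tilde u_n$ using the weak convergence $\pd_x\tilde u_n\rightharpoonup\pd_x\tilde u$ in $L^2_{t,x}$ (from Remark \ref{rem:tildeu_n_uniform}) against the strongly convergent factor $c_{N_n}(\tilde u_n)$.

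I expect the main subtlety to be the limit in the product $c_{N_n}(\tilde u_n)\pd_x\tilde u_n$: the derivative factor converges only weakly in $L^2_{t,x}$, so I would handle it either by the strong--weak pairing just described (strong $C_{t,x}$ convergence of $c_{N_n}(\tilde u_n)$ against weak $L^2_{t,x}$ convergence of $\pd_x\tilde u_n$), or, more cleanly, by avoiding the product altogether and working with $F_{N_n}(\tilde u_n)$: one has $\pd_x F_{N_n}(\tilde u_n) = \tfrac12(\tilde R_n - \tilde S_n)$ exactly, and $F_{N_n}(\tilde u_n)\to F(\tilde u)$ in $C_{t,x}$ (by the same split as above: $\norm{F_{N_n}(\tilde u_n) - F(\tilde u_n)}_{C_{t,x}}\to 0$ since $F_{N_n}\to F$ uniformly on the relevant compact, and $\norm{F(\tilde u_n) - F(\tilde u)}_{C_{t,x}}\le\kappa\norm{\tilde u_n - \tilde u}_{C_{t,x}}\to 0$), hence $\pd_x F_{N_n}(\tilde u_n)\to\pd_x F(\tilde u)$ in the sense of distributions; matching limits forces $2\pd_x F(\tilde u) = \tilde R - \tilde S$ a.e. The remaining bookkeeping — confirming zero spatial average is preserved in the limit, and that all convergences can be upgraded from a.s.\ to the stated a.e.\ statements — is routine given the uniform bounds of Lemma \ref{thm:energy_N_SJ} and Remark \ref{rem:tildeu_n_uniform}.
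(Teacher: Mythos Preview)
Your argument for Part 1 is essentially the paper's, with one wrinkle worth noting: you bound $\norm{c(\tilde u_n)-c(\tilde u)}_{C_{t,x}}$ via the Lipschitz constant $\kappa$, then say the argument for $\tilde c$ is ``identical''. But $\tilde c = c'/(4c)$ is only continuous under Assumption~\ref{sum:c_sigma} (since $c\in C^1$, not $C^2$), so the Lipschitz step does not transfer. The paper handles this by using only uniform continuity of $c$ on the compact range of $\tilde u_n(\tilde\omega)$, via a modulus $\varpi$, precisely so that the same reasoning applies verbatim to $\tilde c$. You already observed that the values lie in a fixed compact set, so the fix is immediate---just replace the Lipschitz estimate by a modulus-of-continuity estimate for the second summand.

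For Part 2 your route is genuinely different and, in some ways, cleaner. The paper works at the level of $F$ and uses the decomposition
\[
2F(\tilde u_n) - 2F(\mathfrak u(\tilde R,\tilde S))
= \pd_x^{-1}\!\Big(\big(\tfrac{c(\tilde u_n)}{c_{N_n}(\tilde u_n)}-1\big)(\tilde R_n-\tilde S_n)\Big)
+ \pd_x^{-1}(\tilde R_n-\tilde R) - \pd_x^{-1}(\tilde S_n-\tilde S),
\]
controlling the first term via $c_{N_n}\to c$ and the latter two via the $C_tL^2_x\text{-}w$ convergence plus Vitali, then matches with Lemma~\ref{thm:Fu_n-Fu}. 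Your approach (b)---show $F_{N_n}(\tilde u_n)\to F(\tilde u)$ in $C_{t,x}$ by splitting $F_{N_n}(\tilde u_n)-F(\tilde u_n)$ and $F(\tilde u_n)-F(\tilde u)$, then take distributional $\pd_x$ and match with $\tfrac12(\tilde R_n-\tilde S_n)\to\tfrac12(\tilde R-\tilde S)$ in $L^2_{t,x}$---avoids the ratio term entirely and gives $2\pd_x F(\tilde u)=\tilde R-\tilde S$ directly. To recover $\tilde u=\mathfrak u(\tilde R,\tilde S)$ from this you still need $\int_\T F(\tilde u)\,\d x=0$, which you should state explicitly: it follows because $F_{N_n}(\tilde u_n)=\tfrac12\pd_x^{-1}(\tilde R_n-\tilde S_n)$ has zero average by construction and the convergence is in $C_{t,x}$. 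The paper's decomposition, while more elaborate, has the advantage of exposing exactly how the approximation error in $c_N$ enters; your argument trades that for brevity.
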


\begin{proof}
From Proposition \ref{thm:skorokhod_N} 
$\tilde{u}_n \to \tilde{u}$, $\tilde{\mathbb{P}}$-a.s.~in 
$C([0,T] \times \T)$.  The 
$\tilde{\mathbb{P}}$-a.s.~convergences follow 
from the convergence $c_{N_n} \to c$ in $C(\R)$  
provided by \eqref{eq:c_N_approx}:
\begin{align*}
c_{N_n}(\tilde{u}_n) - c(\tilde{u}) 
&= c_{N_n}(\tilde{u}_n) - c(\tilde{u}_n) + c(\tilde{u}_n) - c(\tilde{u})\\
& \le \norm{c_{N_n} -c}_{C(\R)} + \bk{c(\tilde{u}_n) - c(\tilde{u})}.
\end{align*}
The convergence of $\tilde{u}_n$ implies that 
for a.e. $\tilde{\omega} \in \tilde{\Omega}$, 
$\{\tilde{u}_n(\tilde{\omega},t,x): n \in \N,\, (t,x) \in [0,T] \times \T\}$ 
take values 
on a compact set $K(\tilde{\omega}) \subset \R$.  
Even if $c$ is only continuous, it is uniformly 
continuous on $K$ with some modulus $\varpi$.
Therefore, $\tilde{\mathbb{P}}$-a.s.,
$$
\norm{c_{N_n}(\tilde{u}_n) - c(\tilde{u})}_{C_{t,x}} 
\le o_{n \uparrow \infty} (1)
	+ \frac{ \bk{c(\tilde{u}_n) - c(\tilde{u})}}
		{\varpi\bk{\abs{\tilde{u}_n - \tilde{u}}}} 
	\varpi(\norm{\tilde{u}_n - \tilde{u}}_{C_{t,x}}) 
= o_{n \uparrow \infty} (1).
$$
Since we did not use the Lipschitz continuity 
of $c$, the same argument holds for $\tilde{c}_{N_n}$ 
and $\tilde{c}$ in place of $c_{N_n}$ and $c$.

It remains then to prove the first equality of 
\eqref{eq:c_x_tilde_c_SJ}, from which its second 
equality follows by the definition of $F$ 
in \eqref{eq:F_defin} and the bounds 
of Remark \ref{rem:tildeu_n_uniform}. 

From Lemma \ref{thm:constitutive_n_SJ}, 
$\mathfrak{u}_{N_n}(\tilde{R}_n,\tilde{S}_n) = \tilde{u}_n$. 
In view of Lemma \ref{thm:Fu_n-Fu}, which implies 
the convergence $F(\tilde{u}_n) \to F(\tilde{u})$ 
in $L^1(\tilde{\Omega}\times [0,T]\times \T)$, 
we shall show that 
\begin{align}\label{eq:u_frak_conv_L1}
\Ex \norm{F(\mathfrak{u}_{N_n}(\tilde{R}_n, \tilde{S}_n) )
	- F(\mathfrak{u}(\tilde{R},\tilde{S}))}_{L^1_{t,x}} 
	\xrightarrow{n\uparrow \infty} 0.
\end{align}
Together, these facts imply 
$$
\Ex \norm{F(\tilde{u})
	- F(\mathfrak{u}(\tilde{R},\tilde{S}))}_{L^1_{t,x}}  = 0.
$$
By the invertibility of $F$, 
the first equality of \eqref{eq:c_x_tilde_c_SJ} ensues.

From the linearity of the inverse differentiation 
$\pd_x^{-1}$, the ``constitutive relation'' for $\tilde{u}_n$ 
in Lemma \ref{thm:constitutive_n_SJ}, and from the 
construction \eqref{eq:u_construct}, we have
\begin{equation}\label{eq:F_difference_ulim}
\begin{aligned}
2F(\mathfrak{u}_{N_n}(\tilde{R}_n, \tilde{S}_n) )
	- 2F(\mathfrak{u}(\tilde{R},\tilde{S}))
& = \pd^{-1}_x\bk{ \bk{\frac{c(\tilde{u}_n)}
	{c_{N_n}(\tilde{u}_n)}  - 1} 
	\bk{\tilde{R}_n - \tilde{S}_n}}\\
&\quad\,\, +  {\pd_x^{-1}\bk{\tilde{R}_n - \tilde{R}}} 
-{\pd_x^{-1} \bk{\tilde{S}_n - \tilde{S}}}. 
\end{aligned}
\end{equation}

Using the convergence \eqref{eq:c(u_n)_converge}, 
\begin{align*}
\abs{\frac{c(\tilde{u}_n)}{c_{N_n}(\tilde{u}_n)}  - 1}
\le \kappa^3 \abs{c_{N_n}(\tilde{u}_n) - c(\tilde{u}) 
	+ c(\tilde{u}) - c(\tilde{u}_n)} = o_{n\uparrow \infty}(1)
\end{align*}
in $L^2(\tilde{\Omega};C([0,T]\times\T))$. 
And therefore,
\begin{align*}
&\norm{\pd^{-1}_x\bk{ \bk{\frac{c(\tilde{u}_n)}
	{c_{N_n}(\tilde{u}_n)}  - 1} 
	\bk{\tilde{R}_n - \tilde{S}_n}}}_{L^1_{\tilde{\omega},t,x}}^2\\
&\lesssim o_{n\uparrow \infty}(1) 
	\Ex \bk{\int_0^T \int_\T \abs{\int_0^x \bk{\tilde{R}_n - \tilde{S}_n}\,\d y 
	- x \int_\T {\tilde{R}_n - \tilde{S}_n}\,\d y }\,\d x\,\d t}^2\\
&\xrightarrow{n\uparrow \infty} 0,
\end{align*}
by the uniform-in-$n$ energy bound of 
Lemma \ref{thm:energy_N_SJ}.

Using the definition  \eqref{eq:dx_inverse} of 
$\pd_x^{-1}$, 
\begin{align*}
&\norm{\pd_x^{-1}\bk{\tilde{R}_n - \tilde{R}}}_{C_{t,x}}  \\
&\le \sup_{t,x}\abs{\int_0^x 
	\frac{\tilde{R}_n(t,y) - \tilde{R}(t,y)}2 \,\d y
 - x \int_\T \frac{\tilde{R}_n(t,y) - \tilde{R}(t,y)}2 \,\d y}\\
&\quad\,\,  + \sup_{t}\abs{\int_\T \int_0^y 
	\frac{\tilde{R}_n(t,z) - \tilde{R}(t,z)}2 \,\d z
- y\int_\T \frac{\tilde{R}_n(t,z) - \tilde{R}(t,z)}2 \,\d z}\\
& \xrightarrow{n\uparrow \infty} 0, 
\quad \text{$\tilde{\mathbb{P}}$-a.s.},
\end{align*}
because $\tilde{R}_n \to \tilde{R}$ in $C([0,T];L^2(\T)-w)$, 
$\tilde{\mathbb{P}}$-a.s.
Again using the energy bound of 
Lemma \ref{thm:energy_N_SJ}, 
Vitali's convergence theorem implies 
\begin{align*}
\norm{\pd_x^{-1}\bk{\tilde{R}_n - \tilde{R}}}_{
	L^1_{\tilde{\omega},t,x}}  \le
\norm{\pd_x^{-1}\bk{\tilde{R}_n - \tilde{R}}}_{
	L^1_{\tilde{\omega}}C_{t,x}}  
\xrightarrow{n\uparrow \infty} 0.
\end{align*}
The corresponding difference  
$\big\|\pd_x^{-1} \big(\tilde{S}_n - \tilde{S}\big)\big\|_{
	L^1_{\tilde{\omega},t,x}}$ 
for $S$ vanishes similarly. 
This establishes \eqref{eq:u_frak_conv_L1}.

\end{proof}

We can finally prove the convergence of the equation 
\eqref{eq:nth_eq_Nlimit}. 
As in \eqref{eq:N_martingale2}, 
let us define the limiting quantities 
\begin{equation}
\label{eq:N_martingale3}
\begin{aligned}
\tilde{M}(t) &:= \sum_{i = 1}^5 I_i,
\end{aligned}
\end{equation}
where
\begin{align*}
I_1 &:= \int_{\T} \tilde{R}(t) \varphi\,\d x
	- \int_{\T} \tilde{R}^{0}\varphi\,\d x,\qquad
I_2 :=  \nu\int_0^t \int_{\T} \pd_x \varphi\, 
		\pd_x\tilde{R}\,\d x\,\d t',\\
I_3&:= \int_0^t \int_{\T} \pd_x \varphi\, c(\tilde{u}) 
		\tilde{R}  \,\d x\,\d t', \qquad 
I_4 := \int_0^t \int_{\T}    \varphi \tilde{c}(\tilde{u}) 
			\big( \tilde{R} - \tilde{S} \big)^2\,\d x \,\d t',\\
I_5 & :=  \frac12 \int_0^t \int_{\T} \sigma\,\pd_x\bk{\sigma\, 
			   \varphi}\,\pd_x \bk{\tilde{R} + \tilde{S}}\,\d x\,\d t'.
\end{align*}

Moreover, we define:
\begin{equation}\label{eq:Z_n_nu_defin}
\begin{aligned}
\tilde{Z}_{n}(t) &:=- \int_{\T}  \pd_x \bk{{\bf P}_{N_n}
	\varphi  \,\sigma}\bk{\tilde{R}_{n} + \tilde{S}_{n}}\,\d x,\\
\tilde{Z} (t) &:=-  \int_{\T} 
	 \pd_x \bk{ \varphi  \,\sigma}\bk{\tilde{R} + \tilde{S}}\,\d x.
\end{aligned}
\end{equation}

The integrals $\tilde{Z}_n$ are the integrands 
in the stochastic integral in \eqref{thm:pw_convergence_N}. 
We shall show that $\tilde{Z}_n \to \tilde{Z}$ 
a.s.~strongly in $L^2([0,T])$, so that the corresponding 
stochastic integral converges in the same 
way by \cite[Lemma 2.1]{Debussche:2011aa}.
On the other hand, we also have 
$\tilde{M}_n \to \tilde{M}$ in $L^2([0,T])$.
This lets us conclude that the limiting equation holds 
for each $t \in [0,T]$, $\tilde{\mathbb{P}}$-a.s.

\begin{lem}\label{thm:pw_convergence_N}
Let $\tilde{M}_n$ be defined as in 
\eqref{eq:N_martingale2} and $\tilde{M}$, 
$\tilde{Z}_n$, and $\tilde{Z}$ be 
defined as in \eqref{eq:N_martingale3} 
and \eqref{eq:Z_n_nu_defin}.
The following $\tilde{\mathbb{P}}$-a.s.~convergences hold:
\begin{align*}
&\tilde{M}_n \xrightarrow{n \uparrow \infty} \tilde{M} 
\quad \text{and} \quad
\tilde{Z}_n \xrightarrow{n \uparrow \infty} \tilde{Z}, \quad
\text{both in $L^{2}([0,T])$}.
\end{align*}
\end{lem}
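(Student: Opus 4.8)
The plan is to prove both convergences by splitting $\tilde{M}_n$ and $\tilde{Z}_n$ into the summands appearing in \eqref{eq:N_martingale2} and \eqref{eq:Z_n_nu_defin}, matching each against the corresponding piece of $\tilde{M}$, $\tilde{Z}$ in \eqref{eq:N_martingale3}--\eqref{eq:Z_n_nu_defin}, and passing to the limit termwise using only the ingredients already in hand: the $\tilde{\mathbb{P}}$-a.s.\ convergences $\tilde{R}_n\to\tilde{R}$, $\tilde{S}_n\to\tilde{S}$ in $L^2([0,T]\times\T)$ and in $C([0,T];L^2(\T)-w)$, with $\tilde{R}^0_n\to\tilde{R}^0$, from Proposition \ref{thm:skorokhod_N}; the $\tilde{\mathbb{P}}$-a.s.\ convergences $c_{N_n}(\tilde{u}_n)\to c(\tilde{u})$, $\tilde{c}_{N_n}(\tilde{u}_n)\to\tilde{c}(\tilde{u})$ in $C([0,T]\times\T)$ from Lemma \ref{thm:c_convergences_N}; and the uniform energy bound of Lemma \ref{thm:energy_N_SJ}, which in the limit gives $\tilde{R},\tilde{S}\in L^2([0,T];H^1(\T))$ $\tilde{\mathbb{P}}$-a.s.\ via \eqref{eq:energy_SJ}. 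Working on a common $\tilde{\mathbb{P}}$-full set, I would first record three facts used throughout: since $\varphi\in C^2(\T)$ and $\sigma\in W^{2,\infty}(\T)$, one has ${\bf P}_{N_n}\varphi\to\varphi$ in $H^2(\T)$, hence $\pd_x{\bf P}_{N_n}\varphi\to\pd_x\varphi$ in $C(\T)$, $\pd_x\bk{{\bf P}_{N_n}\varphi\,\sigma}\to\pd_x\bk{\varphi\,\sigma}$ in $L^2(\T)$, and $\pd_x\bk{\sigma\,\pd_x\bk{\sigma\,{\bf P}_{N_n}\varphi}}\to\pd_x\bk{\sigma\,\pd_x\bk{\sigma\,\varphi}}$ in $L^2(\T)$; applying the uniform boundedness principle to the $C([0,T];L^2(\T)-w)$-convergent sequences yields $\sup_n\sup_{t\in[0,T]}\bk{\norm{\tilde{R}_n(t)}_{L^2}+\norm{\tilde{S}_n(t)}_{L^2}}<\infty$ $\tilde{\mathbb{P}}$-a.s.; and $\tilde{R}_n,\tilde{S}_n$ are $\tilde{\mathbb{P}}$-a.s.\ bounded in $L^2([0,T]\times\T)$, being convergent there.

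The convergence $\tilde{Z}_n\to\tilde{Z}$ is the easy one. Writing $\tilde{Z}_n(t)-\tilde{Z}(t)=-\int_\T\pd_x\bk{{\bf P}_{N_n}\varphi\,\sigma}\bk{\tilde{R}_n+\tilde{S}_n-\tilde{R}-\tilde{S}}(t)\,\d x-\int_\T\bk{\pd_x\bk{{\bf P}_{N_n}\varphi\,\sigma}-\pd_x\bk{\varphi\,\sigma}}\bk{\tilde{R}+\tilde{S}}(t)\,\d x$, Cauchy--Schwarz in $x$ and integration over $t\in[0,T]$ bound $\norm{\tilde{Z}_n-\tilde{Z}}_{L^2([0,T])}$ by $\sup_n\norm{\pd_x\bk{{\bf P}_{N_n}\varphi\,\sigma}}_{L^2}\norm{\tilde{R}_n+\tilde{S}_n-\tilde{R}-\tilde{S}}_{L^2_{t,x}}+\norm{\pd_x\bk{{\bf P}_{N_n}\varphi\,\sigma}-\pd_x\bk{\varphi\,\sigma}}_{L^2}\norm{\tilde{R}+\tilde{S}}_{L^2_{t,x}}$, and both summands tend to $0$ $\tilde{\mathbb{P}}$-a.s. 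For $\tilde{M}_n\to\tilde{M}$ I would go term by term. The endpoint and initial-datum terms $\int_\T\tilde{R}_n(t)\varphi\,\d x$ and $\int_\T\tilde{R}^0_n\varphi\,\d x$ converge in $C([0,T])$ by, respectively, the $C([0,T];L^2(\T)-w)$ convergence of $\tilde{R}_n$ and the a.s.\ convergence of $\tilde{R}^0_n$. For the $c_{N_n}(\tilde{u}_n)$-drift I would split $\pd_x{\bf P}_{N_n}\varphi\,c_{N_n}(\tilde{u}_n)\tilde{R}_n-\pd_x\varphi\,c(\tilde{u})\tilde{R}=\bk{\pd_x{\bf P}_{N_n}\varphi\,c_{N_n}(\tilde{u}_n)-\pd_x\varphi\,c(\tilde{u})}\tilde{R}_n+\pd_x\varphi\,c(\tilde{u})\bk{\tilde{R}_n-\tilde{R}}$: the bracket in the first term tends to $0$ in $C([0,T]\times\T)$ while $\tilde{R}_n$ is a.s.\ bounded in $L^2_{t,x}$, and the second pairs the fixed $L^2_{t,x}$-function $\pd_x\varphi\,c(\tilde{u})$ with $\tilde{R}_n-\tilde{R}\to0$ in $L^2_{t,x}$, so the $x$-integrated integrand converges in $L^1([0,T])$ and its primitive in $C([0,T])$. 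The nonlinear term is handled the same way once one observes that $L^2_{t,x}$-convergence of $\tilde{R}_n-\tilde{S}_n$ upgrades to $L^1_{t,x}$-convergence of $\bk{\tilde{R}_n-\tilde{S}_n}^2$ via $\norm{g_n^2-g^2}_{L^1}\le\norm{g_n-g}_{L^2}\norm{g_n+g}_{L^2}$, and that $\varphi\,\tilde{c}(\tilde{u})\in C([0,T]\times\T)$.

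The one step demanding care, and the main obstacle, is the pair of terms carrying a spatial derivative of $\tilde{R}_n$ (and $\tilde{S}_n$): the viscous term $\nu\int_0^t\int_\T\pd_x\varphi\,\pd_x\tilde{R}_n\,\d x\,\d t'$ and the $\sigma\,\pd_x\bk{\sigma\,\pd_x(\cdot)}$ drift $\frac12\int_0^t\int_\T\sigma\,\pd_x\bk{\sigma\,{\bf P}_{N_n}\varphi}\,\pd_x\bk{\tilde{R}_n+\tilde{S}_n}\,\d x\,\d t'$. The Skorokhod representatives converge only \emph{strongly} in $L^2_{t,x}$ (and in $C([0,T];L^2-w)$), while $\pd_x\tilde{R}_n$, $\pd_x\tilde{S}_n$ are controlled uniformly in $n$ only after integration in $\tilde{\omega}$, so one cannot pass to the limit in these integrals as written. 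I would resolve this by integrating by parts to transfer the spatial derivative entirely onto the smooth test functions, e.g.\ $\int_\T\sigma\,\pd_x\bk{\sigma\,{\bf P}_{N_n}\varphi}\,\pd_x\bk{\tilde{R}_n+\tilde{S}_n}\,\d x=-\int_\T\pd_x\bk{\sigma\,\pd_x\bk{\sigma\,{\bf P}_{N_n}\varphi}}\bk{\tilde{R}_n+\tilde{S}_n}\,\d x$ and $\int_\T\pd_x\varphi\,\pd_x\tilde{R}_n\,\d x=-\int_\T\pd_{xx}^2\varphi\,\tilde{R}_n\,\d x$, which are now of the already-treated type ($\tilde{R}_n,\tilde{S}_n$ with no derivative, tested against an $L^2(\T)$ function converging strongly, using the recorded convergences of ${\bf P}_{N_n}\varphi$); the corresponding limiting pieces $I_2$ and $I_5$ of \eqref{eq:N_martingale3}, written with $\pd_x\tilde{R}$, equal their integrated-by-parts forms $-\nu\int_0^t\int_\T\pd_{xx}^2\varphi\,\tilde{R}\,\d x\,\d t'$ and $-\frac12\int_0^t\int_\T\pd_x\bk{\sigma\,\pd_x\bk{\sigma\,\varphi}}\bk{\tilde{R}+\tilde{S}}\,\d x\,\d t'$ precisely because $\tilde{R},\tilde{S}\in L^2([0,T];H^1(\T))$ $\tilde{\mathbb{P}}$-a.s., so the match is exact. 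Assembling the five (or six) termwise limits gives $\tilde{M}_n\to\tilde{M}$ in $C([0,T])\subset L^2([0,T])$ $\tilde{\mathbb{P}}$-a.s., which together with the $\tilde{Z}_n$ estimate completes the proof; in particular $\tilde{Z}_n\to\tilde{Z}$ a.s.\ in $L^2([0,T])$ lets \cite[Lemma 2.1]{Debussche:2011aa} be invoked to pass to the limit in the stochastic integral of \eqref{eq:nth_eq_Nlimit}.
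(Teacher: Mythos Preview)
Your proposal is correct and follows essentially the same approach as the paper's proof: both arguments split $\tilde{M}_n$ and $\tilde{Z}_n$ term by term, use the $C([0,T];L^2(\T)-w)$ convergence for the endpoint piece, the a.s.\ $L^2_{t,x}$ convergence of $\tilde{R}_n,\tilde{S}_n$ together with the $C_{t,x}$ convergence of $c_{N_n}(\tilde{u}_n),\tilde{c}_{N_n}(\tilde{u}_n)$ for the drift terms, the $L^1_{t,x}$ convergence of $(\tilde{R}_n-\tilde{S}_n)^2$ for the nonlinearity, and---for the viscous and Stratonovich-correction terms---integration by parts onto the test function before passing to the limit, undoing this at the limit via $\tilde{R},\tilde{S}\in L^2_tH^1_x$. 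Your only notable deviations are cosmetic: you invoke the uniform boundedness principle for an a.s.\ $L^\infty_tL^2_x$ bound (harmless but not actually used), and you obtain convergence in $C([0,T])$ rather than merely $L^2([0,T])$, which is slightly sharper than what the paper states but follows from the same estimates.
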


\begin{proof}
In this proof, all convergences happen 
$\tilde{\mathbb{P}}$-a.s., and we generally omit 
this epithet as understood.

From the Proposition \ref{thm:skorokhod_N}, 
$\tilde{R}_n \to \tilde{R}$ in $C([0,T];L^2(\T)-w)$ 
and $\tilde{R}^0_n \to \tilde{R}^0$ in 
$L^2(\T)$, hence 
$$
\int_{\T} \tilde{R}_{n}(t) \varphi\,\d x 
		- \int_{\T} \tilde{R}^0_n\varphi\,\d x
\to I_1 \quad \text{in $C([0,T])$}.
$$

We next argue that for $\psi_n 
	\to \psi$ in $C([0,T] \times \T)$, a.s., 

\begin{equation}\label{eq:general_n_R_n-R}
\begin{aligned}
\int_\T \psi_n \tilde{R}_n \,\d x
&\to \int_\T \psi \tilde{R} \,\d x, \\
\int_0^t\int_\T \psi_n \tilde{R}_n \,\d x  \,\d t'
&\to\int_0^t \int_\T \psi \tilde{R} \,\d x \,\d t', 
\quad \text{both in $L^2([0,T])$}.
\end{aligned}
\end{equation}
We have 
\begin{align*}
\int_0^T\abs{\int_\T \psi_n \tilde{R}_n - \psi \tilde{R}\,\d x }^2\,\d t
& \le \norm{ \bk{\psi_n - \psi}\tilde{R}_n }_{L^2_tL^1_x}^2
 +\norm{\psi \bk{\tilde{R}_n - \tilde{R}}}_{L^2_tL^1_x}^2\\
 &
\le \norm{\psi_n - \psi}_{C_{t,x}}^2 \norm{\tilde{R}_n}_{L^2_{t,x}}^2
+ \norm{ \psi}_{C_{t,x}}^2 \norm{\tilde{R}_n - \tilde{R}}_{L^2_{t,x}}^2.
\end{align*}
The first term on the right tends to nought 
as $\tilde{R}_n \to \tilde{R}$ in $L^2([0,T]\times \T)$, 
and $\norm{\tilde{R}_n(\tilde{\omega})}_{L^2_{t,x}}$ 
is hence bounded in that space uniformly in $n$. 
The second term tends to nought by 
the same convergence.
Inserting an extra temporal integral in the 
calculations above does not change the 
argument substantially, and allows us to 
deduce the second statement of \eqref{eq:general_n_R_n-R}.
The convergences \eqref{eq:general_n_R_n-R} 
also hold for $(\tilde{S}_n, \tilde{S})$ in place 
of $(\tilde{R}_n,\tilde{R})$, as these pairs share 
the same bounds and convergences.

Using the deterministic convergence 
${\bf P}_n \varphi \to \varphi$ in $H^3(\T)$, 
which implies $\pd_x {\bf P}_n \varphi 	
\to \pd_x \varphi$ in $L^\infty(\T)$, 
the first convergence of \eqref{eq:general_n_R_n-R} gives us:
\begin{align}\label{eq:Z_convergence}
\tilde{Z}_n \to \tilde{Z} 
\quad \text{in $L^2([0,T])$}.
\end{align}

Similarly, by the second statement of 
\eqref{eq:general_n_R_n-R}, we have 
\begin{align*}
&-\nu \int_0^t \int_\T \pd_{xx}^2 {\bf P}_{N_n}\varphi \,\tilde{R}_n\,\d x \,\d t' 
\to -\nu \int_0^t \int_\T \pd_{xx}^2 \varphi \,\tilde{R}\,\d x \,\d t' 
= I_2,\\
&- \frac12 \int_0^t \int_{\T} \pd_x\bk{\sigma\,\pd_x\bk{\sigma\, 
			   {\bf P}_{N_n} \varphi}}\, \bk{\tilde{R}_n + \tilde{S}_n}\,\d x\,\d t'\\
&\qquad\qquad\quad\,\,\,
 \to -\frac12 \int_0^t \int_{\T} \pd_x\bk{\sigma\,\pd_x\bk{\sigma\, 
			 \varphi}}\, \bk{\tilde{R} + \tilde{S}}\,\d x\,\d t' = I_5,			   
\end{align*}
both in $L^2([0,T])$.
The final equality in each convergence 
holds as $\tilde{R}, \tilde{S}$ take values in 
$L^2([0,T];H^1(\T))$ (Lemma \ref{thm:energy_N_SJ}), 
and it is possible to integrate-by-parts again 
after passing to the limit.

Along with the convergence 
\eqref{eq:c(u_n)_converge} of 
$c_{N_n}(\tilde{u}_n) \to c(\tilde{u})$ 
in $C([0,T]\times \T)$, \eqref{eq:general_n_R_n-R} now implies  
\begin{align*}
\int_0^t \int_\T {\bf P}_{N_n} \pd_x \varphi
	\,c_{N_n}(\tilde{u}_n) \tilde{R}_n\,\d x \,\d t' 
	\to I_3, \quad \text{in $L^2([0,T])$}.
\end{align*}

The argument for $I_4$ is similar.
The strong convergences 
$(\tilde{R}_n, \tilde{S}_n) \to (\tilde{R}, \tilde{S})$ 
in $L^2([0,T] \times \T)$ implies that  
$\bk{\tilde{R}_n - \tilde{S}_n}^2 \to \bk{\tilde{R} - \tilde{S}}^2$
in $L^1([0,T]\times \T)$. Using now the 
convergence $\tilde{c}_{N_n}(\tilde{u}_n) 
	\to \tilde{c}(\tilde{u})$ 
in $L^\infty([0,T] \times \T)$, we find 
\begin{equation*}
\begin{aligned}
&\norm{\int_0^\cdot \int_\T {\bf P}_{N_n}\varphi 
	\tilde{c}_{N_n}(\tilde{u}_N) \bk{\tilde{R}_n - \tilde{S}_n}^2 \,\d x\,\d t' 
	- I_4}_{L^2_t}\\
& \le  T \norm{ {\bf P}_{N_n}\varphi 
	\tilde{c}_{N_n}(\tilde{u}_n) }_{L^\infty_{t,x}}
	\norm{\bk{\tilde{R}_n - \tilde{S}_n}^2 
	- \bk{\tilde{R} - \tilde{S}}^2}_{L^1_{t,x}}\\
&\quad\,\, + 
	T \norm{ {\bf P}_{N_n}\varphi 
	\tilde{c}_{N_n}(\tilde{u}_n)  - \varphi 
	\tilde{c}(\tilde{u}) }_{L^\infty_{t,x}}
	\norm{\tilde{R} - \tilde{S}}_{L^2_{t,x}}^2
	\xrightarrow{n \uparrow \infty}0.
\end{aligned}
\end{equation*}

Bringing together the a.s.~convergence 
in $L^2([0,T])$ for $I_1$ to  $I_5$, 
and the convergence \eqref{eq:Z_convergence} 
for $\tilde{Z}_n$, we have proven the Lemma.

\end{proof}

Using \cite[Lemma 2.1]{Debussche:2011aa}, 
we conclude from the convergence 
$\tilde{Z}_n \to \tilde{Z}$ in Lemma 
\ref{thm:pw_convergence_N} that 
$$
\int_0^t \tilde{Z}_n \,\d W_n \to \int_0^t \tilde{Z}\,\d W 
\quad \text{$\tilde{\mathbb{P}}$-a.s.~in $L^2([0,T])$.}
$$

Using the continuity of the temporal 
integrals and the inclusion $\tilde{R} \in C([0,T];L^2(\T)-w)$, 
we conclude that for each $t$, 
$$
\tilde{M}(t) = \int_0^t \tilde{Z}\,\d W, 
	\quad \text{$\tilde{\mathbb{P}}$-a.s.}
$$
With exactly the same argument for the equation 
for $\tilde{S}$, we have shown:
\begin{thm}
With the notation of Proposition \ref{thm:skorokhod_N}, 
there exists a martingale solution 
$(\tilde{R}, \tilde{S}, \tilde{E}, \tilde{W})$ 
to the variational wave equation \eqref{eq:vvw}.
\end{thm}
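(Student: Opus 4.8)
The plan is to verify, item by item, that the quadruple $(\tilde{R},\tilde{S},\tilde{E},\tilde{W})$ satisfies every requirement of Definition~\ref{def:mart_sol}, where $\tilde{E}=(\tilde{\Omega},\tilde{\mathcal{F}},\{\tilde{\mathcal{F}}_t\}_{t\in[0,T]},\tilde{\mathbb{P}})$ is the probability space of Proposition~\ref{thm:skorokhod_N}, equipped with the right-continuous completion of the limit filtration $\{\tilde{\mathcal{F}}_t\}$ introduced before Lemma~\ref{thm:W_convg}; passing to this augmentation disturbs neither the fact that $\tilde{W}$ is a Brownian motion nor the adaptedness of $(\tilde{R},\tilde{S})$, by the usual argument (here using $\tilde{R},\tilde{S}\in C([0,T];L^2(\T)-w)$ $\tilde{\mathbb{P}}$-a.s.). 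Property~(i) is then immediate, and property~(ii) is exactly Lemma~\ref{thm:W_convg}. For property~(iii), the inclusion $(\tilde{R},\tilde{S})\in\bk{L^{2p_0}(\tilde{\Omega};L^\infty([0,T];L^2(\T)))}^2\cap\bk{L^{2p_0}(\tilde{\Omega};L^2([0,T];H^1(\T)))}^2$ is precisely the limiting bound \eqref{eq:energy_SJ} of Lemma~\ref{thm:energy_N_SJ}, while adaptedness is built into the construction of $\{\tilde{\mathcal{F}}_t\}$. Property~(iv) follows because, for fixed $\varphi\in C^1(\T)$, the maps $t\mapsto\int_\T\tilde{R}\varphi\,\d x$ and $t\mapsto\int_\T\tilde{S}\varphi\,\d x$ are $\tilde{\mathbb{P}}$-a.s.\ continuous thanks to $\tilde{R},\tilde{S}\in C([0,T];L^2(\T)-w)$, and, being adapted, are progressively measurable.

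For property~(v) I would argue from the joint equality of laws in Proposition~\ref{thm:skorokhod_N}: $(\tilde{R}^0_n,\tilde{S}^0_n)\sim({\bf P}_{N_n}R^0,{\bf P}_{N_n}S^0)$, and since ${\bf P}_{N_n}R^0\to R^0$, ${\bf P}_{N_n}S^0\to S^0$ in $L^2(\T)$ while $(\tilde{R}^0_n,\tilde{S}^0_n)\to(\tilde{R}^0,\tilde{S}^0)$ in $\bk{L^2(\T)}^2$ $\tilde{\mathbb{P}}$-a.s., the laws of the left-hand sides converge to the law of $(R^0,S^0)$ and to the law of $(\tilde{R}^0,\tilde{S}^0)$ at once; hence the latter is $\Lambda$, and in particular $\int_\T\tilde{R}^0-\tilde{S}^0\,\d x=0$ $\tilde{\mathbb{P}}$-a.s.\ because $\Lambda$ is supported on that hyperplane. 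Moreover $\tilde{R}(0)=\tilde{R}^0$ and $\tilde{S}(0)=\tilde{S}^0$ $\tilde{\mathbb{P}}$-a.s., obtained by passing the equalities $\tilde{R}_n(0)=\tilde{R}^0_n$, $\tilde{S}_n(0)=\tilde{S}^0_n$ (which hold by construction through the Dudley maps) to the limit. Property~(vi) harvests the work of Section~\ref{sec:SJthm}: for a fixed $\varphi\in C^2(\T)$, Lemma~\ref{thm:pw_convergence_N} gives $\tilde{M}_n\to\tilde{M}$ and $\tilde{Z}_n\to\tilde{Z}$ in $L^2([0,T])$ $\tilde{\mathbb{P}}$-a.s.; feeding the latter into \cite[Lemma~2.1]{Debussche:2011aa} yields $\int_0^\cdot\tilde{Z}_n\,\d\tilde{W}_n\to\int_0^\cdot\tilde{Z}\,\d\tilde{W}$ $\tilde{\mathbb{P}}$-a.s.\ in $L^2([0,T])$, so passing \eqref{eq:nth_eq_Nlimit} to the limit gives $\tilde{M}(t)=\int_0^t\tilde{Z}\,\d\tilde{W}$ for every $t\in[0,T]$, $\tilde{\mathbb{P}}$-a.s., and the same argument produces the companion identity for $\tilde{S}$. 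An integration by parts—licit because $\tilde{R},\tilde{S}\in L^2([0,T];H^1(\T))$ $\tilde{\mathbb{P}}$-a.s.\ and $\sigma\in W^{2,\infty}(\T)$—rewrites $\tilde{M}$ and $\tilde{Z}$ into exactly the weak forms displayed in Definition~\ref{def:mart_sol}(vi), in which $u=\tilde{u}=\mathfrak{u}(\tilde{R},\tilde{S})$ by Lemma~\ref{thm:c_convergences_N}, with $\tilde{u}\in L^{2p_0}(\tilde{\Omega};C([0,T]\times\T))$ by Remark~\ref{rem:tildeu_n_uniform}.

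The one point genuinely requiring care—and what I expect to be the main obstacle—is that the foregoing yields, for each \emph{fixed} $\varphi$, a $\tilde{\mathbb{P}}$-null set (a priori depending on $\varphi$) off which the identity holds for all $t$, whereas Definition~\ref{def:mart_sol}(vi) demands a single null set valid for all $\varphi\in C^2(\T)$ and all $t$ simultaneously. I would resolve this in the standard way: fix a countable $\{\varphi_k\}\subset C^2(\T)$ dense in the $C^2$ topology, take the union of the associated null sets, and extend to arbitrary $\varphi$ by approximation, using the continuous dependence of each term of the weak form on $\varphi$—here $\varphi_k\to\varphi$ in $C^2(\T)$ together with the $\tilde{\mathbb{P}}$-a.s.\ bounds $\tilde{R},\tilde{S}\in L^2_tH^1_x$, $\tilde{u}\in L^\infty_{t,x}$, the boundedness of $c$ and $\tilde{c}$, and the It\^o isometry (equivalently, continuity of the stochastic integral in its integrand in $L^2([0,T])$) for the martingale term. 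Once this common null set is secured, all of Definition~\ref{def:mart_sol}(i)--(vi) hold for $(\tilde{R},\tilde{S},\tilde{E},\tilde{W})$, which is the assertion of the theorem.
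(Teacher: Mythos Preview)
Your proposal is correct and follows essentially the same approach as the paper: both derive the weak form by passing \eqref{eq:nth_eq_Nlimit} to the limit via Lemma~\ref{thm:pw_convergence_N} and \cite[Lemma~2.1]{Debussche:2011aa}, then repeat for $\tilde{S}$. Your write-up is more thorough in explicitly checking items (i)--(v) of Definition~\ref{def:mart_sol}, which the paper leaves implicit; note also that your final density argument for a single $\varphi$-independent null set goes beyond what Definition~\ref{def:mart_sol}(vi) strictly requires, since the quantifier order there (``for every $\varphi$ \dots\ a.s.'') allows the null set to depend on $\varphi$.
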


\section{Existence and uniqueness of pathwise solutions}
\label{sec:pathwise}

In this section we improve our martingale existence result to 
{\em pathwise} existence via a Gy\"ongy--Krylov 
argument \cite[Section 3]{GK1996}. 
This is an SPDE version of the 
Yamada--Watanabe principle, which states 
that martingale existence and pathwise uniqueness 
implies probabilistically strong existence of solutions. 
We therefore start by showing pathwise uniqueness of 
solutions to \eqref{eq:vvw}. 
In doing so, since solutions are assumed only to lie
in $L^2_{\tilde{\omega}}L^\infty_tL^2_x$, we 
shall need to mollify \eqref{eq:vvw} with  
spatial mollifiers in order to apply It\^o's formula 
to time-continuous processes, with the mollified 
equations interpreted pointwise in $x$. 
This procedure will in turn produce standard 
and double commutators that need to be controlled (see, e.g., 
\cite[Proposition 3.4]{Punshon-Smith:2018aa}).

\subsection{Commutator estimates for $c(\tilde{u})$}
In this subsection, we establish variational wave 
equation specific convergence  
results for the composition $c(\tilde{u})$ for 
$\tilde{u}$ defined in Lemma \ref{thm:tilde_u}. 
These include commutator estimates for 
mollifications that will be used to establish 
pathwise uniqueness in Section \ref{sec:pathwise_subsec}. 
The non-zero viscosity $\nu$ allows the energy 
inequality \eqref{eq:energy_SJ} to give us $L^2_x$ control on 
$\pd_x R$ and $\pd_x S$, which come up in the nonlinear transport 
term. We shall thereby be able to deploy our 
commutator estimates below, crucially dependent on 
$H^1_x$ regularity, to send mollification to zero.

\begin{lem}\label{thm:cR1-cR2}
Recall the auxiliary function $F$ of \eqref{eq:F_defin} 
and the construction \eqref{eq:u_construct} of $\mathfrak{u}$. 
For $i = 1,2$, and $(R_i,S_i) \in L^2(\T)$, set 
\begin{align*}
u_i &= \mathfrak{u}(R_i,S_i).
\end{align*}

The following bounds hold:
\begin{itemize}
\item[(i)]
\begin{align}\label{eq:u1-u2}
\norm{u_1 - u_2}_{L^\infty_x} 
&\lesssim  \abs{F^{-1}}_{\rm Lip}
	\bk{\norm{R_1 - R_2}_{L^1_x} 
		+ \norm{S_1 - S_2}_{L^1_x}},
\end{align}
 \item[(ii)]
 \begin{equation}\label{eq:cR1-cR2}
\begin{aligned}
&\norm{c(u_1) R_1 - c(u_2) R_2}_{L^2_x}
	+ \norm{c(u_1) S_1 - c(u_2) S_2}_{L^2_x} \\
&\qquad\lesssim_\kappa \bk{1 + \norm{R_1}_{L^2_x} \wedge \norm{R_2}_{L^2_x}
	+ \norm{S_1}_{L^2_x} \wedge \norm{S_2}_{L^2_x}}\\
&\qquad\qquad \qquad\qquad\times	\bk{\norm{R_1 - R_2}_{L^2_x} 
	+ \norm{S_1 - S_2}_{L^2_x}}.
\end{aligned}
\end{equation}
\item[(iii)] If additionally, $(R_i,S_i) \in H^1(\T)$ for $i = 1,2$, 
 \begin{equation}\label{eq:cdR1-cdR2}
\begin{aligned}
&\norm{c(u_1) \pd_xR_1 - c(u_2) \pd_x R_2}_{L^2_x}
	+ \norm{c(u_1)\pd_x S_1 - c(u_2) \pd_x S_2}_{L^2_x} \\
&\qquad\lesssim_\kappa \bk{1 + \norm{\pd_x R_1}_{L^2_x} \wedge \norm{\pd_x R_2}_{L^2_x}
	+ \norm{\pd_x S_1}_{L^2_x} \wedge \norm{\pd_x S_2}_{L^2_x}}\\
&\qquad\qquad \qquad\qquad\times	\bk{\norm{R_1 - R_2}_{H^1_x} 
	+ \norm{S_1 - S_2}_{H^1_x}}.
\end{aligned}
\end{equation}
\end{itemize}
\end{lem}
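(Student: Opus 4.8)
The plan is to reduce all three estimates to two ingredients available under Assumption \ref{sum:c_sigma}: the Lipschitz continuity of $c$ and of $F^{-1}$, both with constants $\lesssim\kappa$ (recall $\abs{c}_{\rm Lip}\le\norm{c'}_{L^\infty}\le\kappa$ and $\abs{F^{-1}}_{\rm Lip}=\norm{1/(c\circ F^{-1})}_{L^\infty}\le\kappa$), together with the elementary mapping bound $\norm{\pd_x^{-1}f}_{L^\infty(\T)}\lesssim\norm{f}_{L^1(\T)}$ for the operator defined in \eqref{eq:dx_inverse}. The latter is read off the explicit formula term by term: each of the three pieces there is controlled by $\norm{f}_{L^1(\T)}$ because $\T$ has unit length and $x\in[0,1]$.

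For (i), I would start from $u_i=F^{-1}\bk{\pd_x^{-1}\frac{R_i-S_i}{2}}$ and use the linearity of $\pd_x^{-1}$ together with the Lipschitz bound for $F^{-1}$ to write
\[
\norm{u_1-u_2}_{L^\infty(\T)}\le\abs{F^{-1}}_{\rm Lip}\,\norm{\pd_x^{-1}\tfrac{(R_1-R_2)-(S_1-S_2)}{2}}_{L^\infty(\T)}.
\]
Applying the $L^1\to L^\infty$ bound to $f=(R_1-R_2)-(S_1-S_2)$ then yields (i) directly.

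For (ii) and (iii), the idea is a split-and-estimate of the product difference, carried out componentwise (the claimed bound is a sum of an $R$-term and an $S$-term, and each may be handled independently, and with its own choice of splitting). For (ii) I would write $c(u_1)R_1-c(u_2)R_2=c(u_1)(R_1-R_2)+(c(u_1)-c(u_2))R_2$: the first piece is $\le\kappa\norm{R_1-R_2}_{L^2_x}$ by $\norm{c}_{L^\infty}\le\kappa$, and the second is $\le\norm{c(u_1)-c(u_2)}_{L^\infty_x}\norm{R_2}_{L^2_x}\le\kappa\norm{u_1-u_2}_{L^\infty_x}\norm{R_2}_{L^2_x}$, which by (i) and the embedding $\norm{\cdot}_{L^1(\T)}\le\norm{\cdot}_{L^2(\T)}$ is $\lesssim_\kappa\norm{R_2}_{L^2_x}\bk{\norm{R_1-R_2}_{L^2_x}+\norm{S_1-S_2}_{L^2_x}}$; running the symmetric splitting (indices $1\leftrightarrow2$) gives the same bound with $\norm{R_1}_{L^2_x}$, hence with the minimum. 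The $S$-component is identical and produces $\norm{S_1}_{L^2_x}\wedge\norm{S_2}_{L^2_x}$, so summing gives \eqref{eq:cR1-cR2}. Claim (iii) is the same argument with $\pd_x R_i$ in place of $R_i$: the first piece becomes $\le\kappa\norm{\pd_x(R_1-R_2)}_{L^2_x}\le\kappa\norm{R_1-R_2}_{H^1_x}$, and in the second piece $\norm{c(u_1)-c(u_2)}_{L^\infty_x}$ is bounded via (i) by $\lesssim_\kappa\norm{R_1-R_2}_{L^1_x}+\norm{S_1-S_2}_{L^1_x}\le\norm{R_1-R_2}_{H^1_x}+\norm{S_1-S_2}_{H^1_x}$, with $\norm{\pd_x R_2}_{L^2_x}$ left in front.

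Since no step is deep, the only thing requiring care is the bookkeeping: confirming all constants collapse to powers of $\kappa$, checking that the normalisation in \eqref{eq:dx_inverse} really does give the $L^1\to L^\infty$ bound, and using the symmetry of each product difference under swapping the two pairs to upgrade the prefactors to the minima as stated. I do not anticipate a genuine obstacle here; (i) carries the load and (ii)--(iii) follow as corollaries.
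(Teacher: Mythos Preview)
Your proposal is correct and follows essentially the same route as the paper: both establish (i) via the Lipschitz continuity of $F^{-1}$ combined with the elementary $L^1\to L^\infty$ bound for $\pd_x^{-1}$, and then derive (ii) and (iii) by the identical add-and-subtract splitting of the product difference, using boundedness of $c$ on one piece and Lipschitz continuity of $c$ together with (i) on the other. You are in fact slightly more explicit than the paper in two places---you spell out the three-term structure of $\pd_x^{-1}$ when justifying the $L^1\to L^\infty$ bound, and you note that running the symmetric splitting ($1\leftrightarrow 2$) yields the minimum in the prefactor---but the argument is otherwise the same.
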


\begin{rem}\label{rem:uRS_converge}
Let $J_\delta = \delta^{-1} J(x/\delta)$, $\delta > 0$, be a 
standard mollifier on $\T$. For $f \in L^1(\T)$, 
let $f_\delta := f *J_\delta$.  Let $R,S \in L^\infty([0,T];L^2(\T))$. 
Define $u^\delta := \mathfrak{u}(R_\delta, S_\delta)$ 
using \eqref{eq:u_construct}. 
Then we have, as following \eqref{eq:u_approx_defin},  
\begin{align}\label{eq:u_delta_properties}
2 c(u^\delta) \pd_x u^\delta 
= {R_\delta - S_\delta}, \qquad
\pd_x c(u^\delta) 
= 2\tilde{c}(u^\delta) \bk{R_\delta - S_\delta}.
\end{align}
Moreover using \eqref{eq:u1-u2}, 
$u^\delta \to \mathfrak{u}(R,S)$ pointwise in $(t,x)$, 
and also in $L^p([0,T];L^\infty(\T))$ 
for any $p < \infty$, by Vitali's convergence theorem.
\end{rem}

\begin{proof}
Using \eqref{eq:c_x_tilde_c_SJ} and 
and the construction \eqref{eq:u_construct}, 
\begin{align*}
\norm{u_1 - u_2}_{L^\infty_{x}} 
\lesssim \abs{F^{-1}}_{\rm Lip}  \bk{\norm{\int_0^\cdot 
	\frac{R_1 - R_2}2\,\d y}_{L^\infty_{x}} 
+ \norm{\int_0^\cdot 
	\frac{S_1 - S_2}2\,\d y}_{L^\infty_{x}}}.
\end{align*}
Since 
$$
\norm{\int_0^\cdot 
	\frac{R_1 - R_2}2\,\d y}_{L^\infty_{x}} 
\le \frac12 \int_\T \abs{R_1 - R_2}\,\d x,
$$
we have \eqref{eq:u1-u2}.

Writing
\begin{align*}
c(u_1) R_1 - c(u_2) R_2 
	& = \bk{c(u_1) - c(u_2)}R_1 + c(u_2)\bk{R_1 - R_2}, 
\end{align*}
we see by the boundedness of $c$ 
the second term on the right is controlled in 
$L^2(\T)$ by $\lesssim_\kappa\norm{R_1 - R_2}_{L^2(\T)}$. 
Since $c$ is Lipschitz, 
\begin{equation*}
\begin{aligned}
\norm{\bk{c(u_1) - c(u_2)}R_1}_{L^2_x}
& \le \norm{c(u_1) - c(u_2)}_{L^\infty_x} 
	\norm{R_1}_{L^2_x}
\lesssim_{\abs{c}_{\rm Lip}}
	\norm{u_1 - u_2}_{L^\infty_x}
	\norm{R_1}_{L^2_x}.
\end{aligned}
\end{equation*}
Inserting \eqref{eq:u1-u2} to control the 
difference $\norm{u_1 - u_2}_{L^\infty(\T)}$
gives us \eqref{eq:cR1-cR2}.

The same argument with $(\pd_x R_i, \pd_x S_i)$, 
in place of $(R_i, S_i)$, $i = 1,2$, implies the third 
bound \eqref{eq:cdR1-cdR2}.
\end{proof}

We prove one further commutator estimate 
involving the nonlinearity $c$ that we shall 
use directly in the proof of Theorem \ref{thm:pathwiseunique}.
\begin{lem}\label{thm:c_commutator}
Let $J_\delta$ be a standard mollifier on $\T$. 
Let $R, S \in L^\infty([0,T];L^2(\T)) 
	\cap L^2([0,T]; H^1(\T))$, and set 
$R_\delta := R*J_\delta$, $S_\delta := S *J_\delta$. 
Let $u := \mathfrak{u}(R,S)$ be defined as in \eqref{eq:u_construct} 
and set $u^\delta := \mathfrak{u}(R_\delta, S_\delta)$. 
The following commutator estimates hold:
\begin{align}
\int_0^T\int_{\T} \abs{c(u^\delta) \pd_x R_\delta 
	- \bk{c(u) \pd_x R} *J_\delta}^2\,\d x \,\d t 
&= o_{\delta \downarrow 0}(1), \label{eq:c_commute1}\\
 \int_0^T\int_{\T} \abs{\tilde{c}(u^\delta)
	\bk{R_{\delta} - S_{\delta}} R_\delta - 
	\bk{\tilde{c}(u) \bk{R - S}R} *J_\delta}^2\,\d x \,\d t
& = o_{\delta \downarrow 0}(1) \label{eq:c_commute2}.
\end{align}
\end{lem}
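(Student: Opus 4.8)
The plan is to treat both commutators by one common device: in each, \emph{freeze the nonlinear coefficient} by replacing $c(u^\delta)$ (resp.\ $\tilde c(u^\delta)$) with $c(u)$ (resp.\ $\tilde c(u)$), observe that the resulting expression is a genuine mollification of an $L^2([0,T]\times\T)$ function and hence converges in $L^2_{t,x}$ by elementary mollifier theory, and absorb the frozen-coefficient error together with the remaining lower-order differences by dominated convergence. The ingredients I would use are: the bounds $R,S\in L^\infty([0,T];L^2(\T))\cap L^2([0,T];H^1(\T))$ assumed in the statement; the one-dimensional Sobolev embedding $H^1(\T)\hookrightarrow L^\infty(\T)$; the boundedness of $c$ and $\tilde c$ from Assumption~\ref{sum:c_sigma}; the convergence $u^\delta\to u$ in $L^p([0,T];L^\infty(\T))$ and pointwise in $(t,x)$ from Remark~\ref{rem:uRS_converge} (which rests on \eqref{eq:u1-u2}); and the standard facts $\norm{f*J_\delta}_{L^r(\T)}\le\norm{f}_{L^r(\T)}$, $f*J_\delta\to f$ in $L^r(\T)$ for $r<\infty$ whenever $f\in L^r(\T)$, and $\pd_x(f*J_\delta)=(\pd_x f)*J_\delta$.

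For \eqref{eq:c_commute1} I would split, using $\pd_x R_\delta=(\pd_x R)*J_\delta$,
\[
c(u^\delta)\pd_x R_\delta-\bk{c(u)\pd_x R}*J_\delta=\bk{c(u^\delta)-c(u)}\pd_x R_\delta+\Big(c(u)(\pd_x R)*J_\delta-\bk{c(u)\pd_x R}*J_\delta\Big).
\]
For the first term I would bound its $L^2_x$ norm by $\kappa\norm{u^\delta-u}_{L^\infty_x}\norm{\pd_x R}_{L^2_x}$ (using that $c$ is Lipschitz with constant $\le\kappa$), which tends to $0$ for a.e.\ $t$ by \eqref{eq:u1-u2} since $R_\delta(t),S_\delta(t)\to R(t),S(t)$ in $L^1_x$ for a.e.\ $t$, and is dominated, after squaring, by $C\norm{\pd_x R(t)}_{L^2_x}^2\in L^1_t$; dominated convergence then gives $o_{\delta\downarrow 0}(1)$ in $L^2_{t,x}$. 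For the second term—the Friedrichs commutator—I would \emph{not} invoke a DiPerna--Lions commutator lemma; instead, since $R\in L^2_tH^1_x$ means $\pd_x R$ is an honest $L^2_{t,x}$ function, I would simply write it as $c(u)\big[(\pd_x R)*J_\delta-\pd_x R\big]-\big[(c(u)\pd_x R)*J_\delta-c(u)\pd_x R\big]$, note that both brackets tend to $0$ in $L^2_x$ for a.e.\ $t$ (the first because $(\pd_x R)*J_\delta\to\pd_x R$ in $L^2_x$, the second because $c(u)\pd_x R(t)\in L^2_x$), with $L^2_x$ norms dominated by $2\kappa\norm{\pd_x R(t)}_{L^2_x}$, and conclude again by dominated convergence.

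For \eqref{eq:c_commute2} I would first record that $\tilde c(u)(R-S)R\in L^2_{t,x}$: $\tilde c(u)$ is bounded by Assumption~\ref{sum:c_sigma}, $R-S\in L^2_tH^1_x\hookrightarrow L^2_tL^\infty_x$, and $R\in L^\infty_tL^2_x$; hence $\bk{\tilde c(u)(R-S)R}*J_\delta\to\tilde c(u)(R-S)R$ in $L^2_{t,x}$, and it remains to show $\tilde c(u^\delta)(R_\delta-S_\delta)R_\delta\to\tilde c(u)(R-S)R$ in $L^2_{t,x}$. I would decompose this difference as
\[
\bk{\tilde c(u^\delta)-\tilde c(u)}(R_\delta-S_\delta)R_\delta+\tilde c(u)\Big[\bk{(R_\delta-S_\delta)-(R-S)}R_\delta+(R-S)(R_\delta-R)\Big].
\]
In the first piece I would bound $\norm{R_\delta-S_\delta}_{L^\infty_x}\le C\norm{R-S}_{H^1_x}$, $\norm{R_\delta}_{L^2_x}\le\norm{R}_{L^2_x}$, and use $\norm{\tilde c(u^\delta)-\tilde c(u)}_{L^\infty_x}\to0$ for a.e.\ $t$—here via the uniform-continuity-on-compacts argument of Lemma~\ref{thm:c_convergences_N}, since $\tilde c$ need not be Lipschitz—so that the square is dominated by $C\norm{R-S}_{H^1_x}^2\norm{R}_{L^\infty_tL^2_x}^2\in L^1_t$; dominated convergence applies. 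In the remaining two pieces I would bound, for a.e.\ $t$, $\norm{(R_\delta-S_\delta)-(R-S)}_{L^\infty_x}\to0$ (mollifier convergence in $H^1_x\hookrightarrow L^\infty_x$) against $\norm{R_\delta}_{L^2_x}\le\norm{R}_{L^2_x}$, and $\norm{R_\delta-R}_{L^2_x}\to0$ against $\norm{R-S}_{L^\infty_x}\le C\norm{R-S}_{H^1_x}$; both yield integrable dominations and hence $L^2_{t,x}$ convergence by dominated convergence. The only step requiring genuine care—and the one I expect to be flagged—is the Friedrichs commutator in \eqref{eq:c_commute1}: one must resist applying a DiPerna--Lions-type lemma, which would only deliver $L^1$ convergence (as $\pd_x c(u)=2\tilde c(u)(R-S)$ is merely $L^\infty_tL^2_x$, not $L^\infty_x$), and instead exploit the extra $L^2_tH^1_x$ regularity of $R,S$ that makes the commutator converge in $L^2$ by direct mollifier estimates; a secondary subtlety is that $\tilde c$ is only continuous, so its composition with $u^\delta$ must be handled through uniform continuity on compacts as in Lemma~\ref{thm:c_convergences_N}, not by a Lipschitz bound.
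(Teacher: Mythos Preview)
Your proposal is correct and follows essentially the same route as the paper: both proofs split each commutator into a ``frozen-coefficient'' error plus mollifier-convergence pieces, and both exploit $R,S\in L^2_tH^1_x$ (so that $\pd_x R\in L^2_{t,x}$ and $R-S\in L^2_tL^\infty_x$) rather than a DiPerna--Lions lemma. The only noticeable difference is in the $\tilde c$ piece of \eqref{eq:c_commute2}: the paper pairs the weak* limit of $(\tilde c(u^\delta)-\tilde c(u))^2$ in $L^\infty_{t,x}$ against $(R-S)^2R^2\in L^1_{t,x}$, whereas you obtain $\norm{\tilde c(u^\delta)-\tilde c(u)}_{L^\infty_x}\to 0$ for a.e.\ $t$ via uniform continuity on compacts and then apply dominated convergence in $t$---your version is a bit more elementary but equivalent in effect.
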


\begin{rem}\label{rem:c_commute34}
Similarly, we have 
\begin{align*}
\int_0^T\int_{\T} \abs{c(u^\delta) \pd_x S_\delta 
	- \bk{c(u) \pd_x S} *J_\delta}^2\,\d x \,\d t'
&= o_{\delta \downarrow 0}(1), \\
\int_0^T \int_{\T} \abs{\tilde{c}(u^\delta)
	\bk{R_{\delta} - S_{\delta}} S_\delta - 
	\bk{\tilde{c}(u) \bk{R - S}S} *J_\delta}^2\,\d x \,\d t'
& = o_{\delta \downarrow 0}(1). 
\end{align*}
\end{rem}

\begin{proof}

{\em 1. Proof of \eqref{eq:c_commute1}.}
\smallskip

We split the integrand into
\begin{align*}
 c&(u^\delta) \pd_x R_\delta - \bk{c(u) \pd_x R} *J_\delta\\
& =  \underbrace{c(u^\delta) \pd_x R_\delta - c(u^\delta) \pd_x R}_{=:I_1}
 +  \underbrace{c(u^\delta) \pd_x R  - c(u) \pd_x R}_{=:I_2} 
 + \underbrace{c(u) \,\pd_x R- \bk{c(u) \pd_x R} *J_\delta}_{=:I_3}.
\end{align*}
Using the standard properties of 
mollifiers and the inclusion
$\pd_x R \in L^2([0,T]\times \T)$, $\norm{I_3}_{L^2_{t,x}} 
	= o_{\delta\downarrow 0}(1)$, and 
\begin{align*}
\norm{I_1}_{L^2_{t,x}}
	\le \norm{c}_{L^\infty_{t,x}} 
	\norm{\pd_x R_\delta - \pd_x R}_{L^2_{t,x}}
	\xrightarrow{\delta \downarrow 0} 0.
\end{align*}
Using Remark \ref{rem:uRS_converge} and 
the Lipschitz bound $\abs{c'} < \kappa$, 
$u^\delta \to u = \mathfrak{u}(R,S)$ implies  
$c(u^\delta) \to c(u)$ in $L^\infty([0,T] \times \T)$. 
Therefore, $\norm{I_2}_{L^2_{t,x}} 	
	= o_{\delta \downarrow 0}(1)$. 

\medskip
{\em 2. Proof of \eqref{eq:c_commute2}.}
\smallskip

We first observe that 
\begin{align*}
\norm{\tilde{c}(u) \bk{R - S} R }_{L^2_{t,x}} 
\le \kappa \norm{R - S}_{L^\infty_t L^2_x}\norm{R}_{L^2_t H^1_x}, 
\end{align*}
so $\tilde{c}(u) \bk{R - S} R \in L^2([0,T] \times T)$, 
and therefore we only need to check that 
$\tilde{c}(u^\delta) \bk{R_\delta - S_\delta} R_\delta 
	- \tilde{c}(u) \bk{R - S} R$ 
tends to zero in $L^2([0,T]\times \T)$ as $\delta \downarrow 0$.

Splitting the difference as
\begin{align*}
&\norm{\tilde{c}(u^\delta) \bk{R_\delta - S_\delta} R_\delta 
- \tilde{c}(u) \bk{R - S} R}_{L^2_{t,x}}\\
& \le \underbrace{\norm{\tilde{c}(u^\delta) 
	\big(\bk{R_\delta - S_\delta} R_\delta
	 - \bk{R - S} R\big)}_{L^2_{t,x}}}_{=:I_1}
 +\underbrace{\norm{ \big(\tilde{c}(u^\delta)  
- \tilde{c}(u)\big) \bk{R - S} R}_{L^2_{t,x}}}_{=:I_2} ,
\end{align*}
we can estimate the terms separately.

For $I_1$, we use the uniform bound on 
$\tilde{c}$ to get 
\begin{align*}
I_1 &\le \kappa^2 \norm{\big(\bk{R_\delta - S_\delta}
		 - \bk{R - S} \big)R}_{L^2_{t,x}}
	+ \kappa^2 \norm{\bk{R_\delta - S_\delta}
		 \bk{R_\delta - R}}_{L^2_{t,x}}\\
	& \lesssim_\kappa  \norm{\bk{R_\delta - S_\delta}
		 - \bk{R - S}}_{L^2_tL^\infty_x}
		\norm{R}_{L^\infty_tL^2_x}
		+  \norm{R_\delta - S_\delta}_{L^\infty_tL^2_x}
	\norm{R_\delta - R}_{L^2_tL^\infty_x}\\
	& \lesssim_\kappa  \norm{\bk{R_\delta - S_\delta}
		 - \bk{R - S}}_{L^2_tH^1_x}
		\norm{R}_{L^\infty_tL^2_x}
		 +  \norm{R_\delta - S_\delta}_{L^\infty_tL^2_x}
	\norm{R_\delta - R}_{L^2_tH^1_x}.
\end{align*}
Each of the terms in $L^2_tH^1_x$ norms 
tend to zero by the standard properties of 
mollification; all other quantities are bounded.

The $(t,x)$ pointwise convergence 
$u^\delta \to u$ (Remark \ref{rem:uRS_converge}) 
and the continuity of $\tilde{c}$ imply that 
$f_\delta := \bk{\tilde{c}(u^\delta) - \tilde{c}(u)}^2\to 0$ 
pointwise on $[0,T] \times \T$. Let $\overline{C}$ 
be the $L^\infty_{t,x}$ weak* limit of a given  
subsequence of the uniformly bounded sequence 
$\{f_\delta\}$. Then
$$
\forall \varphi \in L^1_{t,x},\qquad 
\int_0^T \int_\T \varphi f_\delta \,\d x\,\d t  
\to \int_0^T \int_\T \varphi \overline{C} \,\d x\,\d t. 
$$
On the other hand, by the majorisation 
$\abs{\varphi f_\delta} \le 4\kappa^4 \abs{\varphi} \in L^1_{t,x}$, 
$\int_0^T \int_\T \varphi f_\delta \,\d x \,\d t \to 0$ 
by the dominated convergence theorem. Therefore 
$\overline{C} \equiv 0$ a.e., along any sub-subsequence 
and hence along the entire sequence.

Now $\bk{R - S}^2R^2 \in L^1_{t,x}$ as 
$\bk{R - S}^2 \in L^\infty_tL^1_x$ 
and $R^2 \in L^1_{t} L^\infty_x$ (since 
$R \in L^2_tH^1_x \hookrightarrow L^2_tL^\infty_x$). 
Therefore,
\begin{align*}
I_2^2 = \int_0^T \int_\T 
	\bk{\tilde{c}(u^\delta) - \tilde{c}(u)}^2
	\bk{R - S}^2R^2\,\d x \,\d t \to 0.
\end{align*}
\end{proof}

\subsection{Pathwise uniqueness}\label{sec:pathwise_subsec}
In order to prove our uniqueness result, we 
shall use the following stochastic Gronwall 
lemma, which marginally generalises  \cite[Lemma 3.8]
	{Xie:2020vh} and \cite[Theorem 4]{Scheutzow:2013wy}
to the case of stopping times.
\begin{lem}[Stochastic Gronwall inequality 
	{\cite[Lemma A.2]{HKP2023}}]	
	\label{thm:sto_gronwall_st}
For a given filtered probability space, 
let $\xi(t)$ and $\eta(t)$ be two non-negative 
adapted processes, $A(t)$ be a continuous, adapted, 
non-decreasing process with $A(0)=0$,
and $M$ a local martingale with $M(0) = 0$. 
Let $\tau$ be a
stopping time on the same filtration as $M$ is
a martingale. Suppose $\xi$ is c\`adl\`ag in time 
and satisfies the following stochastic 
differential inequality on $[0,T\wedge \tau]$:
$$
\d \xi \le \eta \,\d t + \xi \,\d A + \,\d M.
$$
For $0 < \nu <  r < 1$, we have
\begin{align*}
	\bigg(\Ex &\sup_{s \in [0, T \wedge \tau]}
	\abs{\xi(s)}^\nu\bigg)^{1/\nu} \\
	&\le \bk{\frac{r}{r-\nu}}^{1/\nu}
	\bigg(\Ex \exp\left(\frac{rA(T 
		\wedge \tau)}{1-r} \right)\bigg)^{(1-r)/r}
	\Ex \bk{\xi(0) + \int_0^{T\wedge \tau} \eta(s) \,\d s}.
\end{align*}
\end{lem}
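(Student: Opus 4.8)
The plan is to reduce the assertion to the stopping‑time‑free statements of \cite[Lemma 3.8]{Xie:2020vh} and \cite[Theorem 4]{Scheutzow:2013wy} and then run their argument; the only genuinely new ingredient is the reduction. First I would stop all the data at $\tau$: put $\xi^\tau_t := \xi(t\wedge\tau)$, $\eta^\tau_t := \eta(t)\one{t\le\tau}$, $A^\tau_t := A(t\wedge\tau)$, $M^\tau_t := M(t\wedge\tau)$. Optional stopping keeps $M^\tau$ a local martingale for the same filtration, $A^\tau$ is still continuous, non-decreasing with $A^\tau(0)=0$, and the Stieltjes identity $\int_0^t \xi^\tau_s\,\d A^\tau_s = \int_0^{t\wedge\tau}\xi(s)\,\d A(s)$ (valid since $A$ has no jumps) shows that $\d\xi^\tau \le \eta^\tau\,\d t + \xi^\tau\,\d A^\tau + \d M^\tau$ holds on all of $[0,T]$. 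Since $\sup_{s\le T}\xi^\tau_s = \sup_{s\le T\wedge\tau}\xi(s)$, $A^\tau(T)=A(T\wedge\tau)$ and $\int_0^T\eta^\tau = \int_0^{T\wedge\tau}\eta$, the desired inequality for $(\xi,\eta,A,M)$ up to $\tau$ is precisely the $\tau\equiv T$ version for $(\xi^\tau,\eta^\tau,A^\tau,M^\tau)$. It is also convenient to stop additionally at a reducing sequence $\rho_n\uparrow\infty$ for $M$, so that below $M$ may be taken to be a true (uniformly integrable) martingale; letting $n\to\infty$ at the end recovers the general case, because $\sup_{s\le T\wedge\rho_n}\xi$, $A(T\wedge\rho_n)$ and $\int_0^{T\wedge\rho_n}\eta$ all increase to their untruncated values, so both sides converge by monotone convergence (and if $\Ex\exp(rA(T)/(1-r))=+\infty$ there is nothing to prove).

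So assume $\tau\equiv T$ and $M$ a martingale with $M(0)=0$. The core is an integrating-factor step. Writing the hypothesis in integrated form as $\xi_t = \xi(0) + \int_0^t\eta_s\,\d s + \int_0^t\xi_s\,\d A_s + M_t - K_t$ with $K$ non-decreasing, $K(0)=0$, and using that $A$ is continuous of bounded variation, the product rule gives $\d(e^{-A_t}\xi_t) \le e^{-A_t}\eta_t\,\d t + e^{-A_t}\,\d M_t$; hence $P_t := e^{-A_t}\xi_t \ge 0$ satisfies $P_t \le \xi(0) + \Gamma_t + \tilde M_t$, where $\Gamma_t := \int_0^t e^{-A_s}\eta_s\,\d s$ is continuous, non-decreasing with $\Gamma(0)=0$ and $\tilde M_t := \int_0^t e^{-A_s}\,\d M_s$ is a martingale vanishing at $0$ (the integrand being bounded and predictable). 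Consequently $\Ex[P_\sigma] \le \Ex[\,\xi(0) + \Gamma_\sigma\,]$ for every bounded stopping time $\sigma$, i.e. $P$ is dominated in the sense of Lenglart by the non-decreasing process $t\mapsto \xi(0)+\Gamma_t$. Lenglart's domination inequality then provides, for every $q\in(0,1)$, a constant $c_q$ with $\Ex\!\left[\big(\sup_{t\le T}P_t\big)^q\right] \le c_q\,\Ex\!\left[(\xi(0)+\Gamma_T)^q\right] \le c_q\,\big(\Ex[\,\xi(0)+\int_0^T\eta\,]\big)^q$, the last two steps using $e^{-A}\le 1$ (since $A\ge A(0)=0$) and Jensen's inequality.

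It remains to restore the exponential factor. Since $\xi_t = e^{A_t}P_t$ and $A_t\le A_T$, we have $\sup_{t\le T}\xi_t \le e^{A_T}\sup_{t\le T}P_t$, hence $\Ex[(\sup_t\xi_t)^\nu] \le \Ex\!\left[e^{\nu A_T}\big(\sup_t P_t\big)^\nu\right]$. Applying Hölder's inequality in $\omega$ with the conjugate exponents $\alpha := \frac{r}{\nu(1-r)}$ and $\beta := \frac{r}{\,r-\nu+\nu r\,}$ — which satisfy $\alpha^{-1}+\beta^{-1}=1$, $\alpha\nu = \frac{r}{1-r}$, and $q:=\beta\nu<1$ (this last being exactly the hypothesis $\nu<r$) — bounds the right-hand side by $\big(\Ex\exp(\tfrac{r}{1-r}A_T)\big)^{\nu(1-r)/r}\cdot\big(\Ex[(\sup_t P_t)^q]\big)^{\nu/q}$; inserting the previous display and taking $\nu$-th roots yields the asserted inequality, with a constant of the form $c_q^{1/q}$ in place of $\big(\tfrac{r}{r-\nu}\big)^{1/\nu}$.

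The last estimate is where the real work lies: to obtain precisely the constant $\big(\tfrac{r}{r-\nu}\big)^{1/\nu}$ one must carry out the weak-type-to-$L^q$ passage inside Lenglart's inequality with the optimal truncation level, exactly as in the proofs of \cite[Theorem 4]{Scheutzow:2013wy} and \cite[Lemma 3.8]{Xie:2020vh}; a minor secondary point is the c\`adl\`ag (jump) bookkeeping in the domination step, which those references already handle. The stopping-time reduction in the first paragraph, although routine, is the point that needs to be written out carefully, since one must check that stopping genuinely preserves each hypothesis (in particular the structure of the Stieltjes term $\int\xi\,\d A$ and the local-martingale property of $M$).
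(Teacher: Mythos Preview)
The paper does not give its own proof of this lemma: it is quoted verbatim from \cite[Lemma A.2]{HKP2023}, with the remark that it ``marginally generalises \cite[Lemma 3.8]{Xie:2020vh} and \cite[Theorem 4]{Scheutzow:2013wy} to the case of stopping times.'' So there is no in-paper argument to compare against, and your write-up is doing strictly more than the paper does here.

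That said, your sketch is the natural proof and matches what the citation chain points to. The stopping-time reduction in your first paragraph is exactly the ``marginal generalisation'' the paper alludes to, and it is correct as you state it: optional stopping preserves the local-martingale property of $M$, continuity of $A$ ensures $\int_0^{t}\xi^\tau\,\d A^\tau=\int_0^{t\wedge\tau}\xi\,\d A$, and all the quantities in the conclusion are recovered from their stopped versions. The integrating-factor step $\d(e^{-A}\xi)\le e^{-A}\eta\,\d t+e^{-A}\,\d M$ is valid because $A$ is continuous and of finite variation (so there is no bracket correction), and the Lenglart domination is set up correctly since $e^{-A}\le1$ makes $\tilde M$ a genuine martingale after your localisation at $\rho_n$. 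Your H\"older exponents $\alpha=r/(\nu(1-r))$, $\beta=r/(r-\nu+\nu r)$ are the right ones: they give $\alpha\nu=r/(1-r)$ and $q=\beta\nu<1$ precisely when $\nu<r$.

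The only point you flag but do not complete---getting the sharp constant $\bigl(\tfrac{r}{r-\nu}\bigr)^{1/\nu}$ rather than a generic $c_q^{1/q}$ from Lenglart---is indeed where the references you cite do the real work; since the paper is content to cite the result, this is not a gap relative to the paper.
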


The main result of this subsection is:
\begin{thm}[Pathwise uniqueness]\label{thm:pathwiseunique}
Let $(R_1, S_1)$ and $(R_2,S_2)$ be pathwise   
solutions to \eqref{eq:vvw}, both with initial conditions 
$(R^0, S^0) \in \bk{L^{2p_0}(\Omega;L^2(\T))}^2$. Then 
\begin{align*}
\Ex \sup_{t \in [0,T]}&\bk{\norm{R_1 - R_2}_{L^2(\T)}^2  
	+ \norm{S_1 - S_2}_{L^2(\T)}^2 }^{1/2} = 0.
\end{align*}
\end{thm}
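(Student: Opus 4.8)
The plan is to derive a stochastic differential inequality for the quantity
$\xi(t) := \norm{R_1-R_2}_{L^2(\T)}^2 + \norm{S_1-S_2}_{L^2(\T)}^2$
and then to apply the stochastic Gr\"onwall inequality (Lemma \ref{thm:sto_gronwall_st}) with $\nu < r < 1$. Since the solutions only lie in $L^\infty_t L^2_x \cap L^2_t H^1_x$ and are merely weakly time-continuous in $L^2_x$, It\^o's formula cannot be applied directly to $\norm{R_1-R_2}_{L^2}^2$. Following the strategy indicated in the introduction, the first step is to mollify both pairs of equations in space by $J_\delta$, obtaining pointwise-in-$x$ identities for $R_{i,\delta} := R_i *J_\delta$ and $S_{i,\delta} := S_i*J_\delta$ that are genuinely time-continuous and to which It\^o's formula applies. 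Writing $P_\delta := R_{1,\delta} - R_{2,\delta}$, $Q_\delta := S_{1,\delta} - S_{2,\delta}$, I would compute $\d\bk{\norm{P_\delta}_{L^2}^2 + \norm{Q_\delta}_{L^2}^2}$ and group the resulting terms.

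The key estimates needed are: (a) the viscous term yields the good negative contribution $-2\nu\bk{\norm{\pd_x P_\delta}_{L^2}^2 + \norm{\pd_x Q_\delta}_{L^2}^2}$; (b) the transport-noise and It\^o-correction terms combine, after integration by parts, to something controlled by $\norm{\sigma}_{W^{2,\infty}}\bk{\norm{P_\delta}_{L^2}^2 + \norm{Q_\delta}_{L^2}^2}$ plus a term absorbed into the dissipation --- this requires the double commutator estimate for $\sigma\pd_x(\sigma\pd_x\cdot)$ versus mollification, as flagged in the introduction (cf. \cite[Proposition 3.4]{Punshon-Smith:2018aa}); (c) the transport term $\pd_x(c(u_i)R_i)$ is handled using that $c(u_1)R_1 - c(u_2)R_2$ enjoys the Lipschitz-type bound \eqref{eq:cR1-cR2} of Lemma \ref{thm:cR1-cR2}, together with the commutator estimate \eqref{eq:c_commute1} of Lemma \ref{thm:c_commutator} to control $c(u_i^\delta)\pd_x R_{i,\delta} - (c(u_i)\pd_x R_i)*J_\delta$; (d) the crucial nonlinear term $\tilde c(u_i)(R_i-S_i)^2$ must be treated by exploiting the algebraic structure: as in the energy estimate, the combination $I_1+I_2=0$ there came from integration by parts using $\pd_x c(u) = 2\tilde c(u)(R-S)$, so the analogous cancellation here between the difference of transport terms and the difference of nonlinear terms is what keeps the estimate closed; the residual is bounded using \eqref{eq:c_commute2} and Remark \ref{rem:c_commute34}. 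After all terms are assembled, sending $\delta \downarrow 0$ in the $\d t$ and stochastic integrals (using the $o_{\delta\downarrow0}(1)$ commutator bounds and that $P_\delta \to R_1-R_2$, $Q_\delta \to S_1-S_2$ in $L^2_{t,x}$, together with the uniform $L^2_t H^1_x$ bounds on the solutions) produces, for the limiting $\xi$,
\begin{align*}
\d\xi \le C\bk{\norm{\sigma}_{W^{2,\infty}}^2 + \Phi_1(t) + \Phi_2(t)}\xi\,\d t + \d M,
\end{align*}
where $\d M$ is the stochastic differential coming from the noise, $M(0)=0$, and $\Phi_i(t)$ is built from $1 + \norm{R_i(t)}_{H^1_x}^2 + \norm{S_i(t)}_{H^1_x}^2$ arising from the Lipschitz constants in \eqref{eq:cdR1-cdR2}.

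The final step is to set $A(t) := \int_0^{t} C\bk{\norm{\sigma}_{W^{2,\infty}}^2 + \Phi_1(s) + \Phi_2(s)}\,\d s$. To apply Lemma \ref{thm:sto_gronwall_st} I need $\Ex \exp\bk{rA(T\wedge\tau)/(1-r)} < \infty$ for some $r \in (0,1)$; since $\Phi_i \in L^1_t$ only $\mathbb{P}$-a.s. (the $H^1_x$ bound is $L^{2p_0}_\omega L^2_t H^1_x$, not $L^\infty_\omega$), I would introduce the stopping times $\tau_M := \inf\{t : \int_0^t \Phi_1 + \Phi_2\,\d s \ge M\} \wedge T$, run the Gr\"onwall argument on $[0,T\wedge\tau_M]$ where $A$ is bounded, conclude $\Ex\sup_{[0,T\wedge\tau_M]}\xi^{1/2} = 0$ since $\xi(0)=0$ and $\eta \equiv 0$, and then let $M\uparrow\infty$ using $\tau_M \uparrow T$ a.s.\ (which holds because $R_i,S_i \in L^2_t H^1_x$ a.s.). The main obstacle I anticipate is step (d): correctly organizing the difference of the two nonlinear/transport term pairs so that the structural cancellation survives the presence of \emph{two} $R$-equations and \emph{two} $S$-equations, i.e.\ tracking which $u_i$ appears in which $\tilde c(u_i)$ and $c(u_i)$ factor and showing the leftover terms are genuinely of the form $(\text{bounded or }L^1_t)\times\xi$ rather than something uncontrollable like $\norm{\pd_x(R_1-R_2)}_{L^2}$ times an unbounded factor; the double commutator bookkeeping for the Stratonovich-to-It\^o correction term is the secondary technical difficulty.
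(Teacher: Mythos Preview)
Your proposal captures the paper's strategy correctly in its main outlines: mollify, apply It\^o's formula to $\xi_\delta := \norm{P_\delta}_{L^2}^2 + \norm{Q_\delta}_{L^2}^2$, control the transport and nonlinear terms via Lemmas \ref{thm:cR1-cR2} and \ref{thm:c_commutator}, handle the noise via the double-commutator estimate, and conclude via the stochastic Gronwall inequality with a stopping time. Two points deserve correction.

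First, and more seriously, your order of limits is reversed relative to the paper and is not obviously justified. You propose to pass $\delta \downarrow 0$ in the differential inequality to obtain $\d\xi \le C\Phi(t)\,\xi\,\d t + \d M$ for the limiting $\xi$, and only then invoke Lemma \ref{thm:sto_gronwall_st}. But that lemma requires $\xi$ to be c\`adl\`ag, and at this stage of the paper the solutions are only known to lie in $C([0,T];L^2(\T)-w)$, so $t\mapsto\xi(t)$ is merely lower semicontinuous. The paper instead keeps $\delta>0$ fixed, applies the stochastic Gronwall inequality to $\xi_\delta$ (which \emph{is} continuous) on $[0,T\wedge\tau_L]$, obtaining $(\Ex\sup_{[0,T\wedge\tau_L]}\xi_\delta^{1/2})^2 \lesssim e^{CLT}\,\Ex h_\delta(T)$ where $h_\delta$ collects all commutator errors, and only then sends $\delta\downarrow 0$ (so the right-hand side vanishes), followed by $L\uparrow\infty$. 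This ordering is essential.

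Second, your description of step (d) slightly misidentifies the mechanism. There is no direct cancellation between the transport term $I_1^R$ and the nonlinear term $I_3^R$ analogous to $I_1+I_2=0$ in the energy estimate. Rather, the paper bounds each separately: $I_1^R$ via integration by parts, \eqref{eq:cR1-cR2}, and Young's inequality (producing $\frac{\nu}{8}\norm{\pd_x P_\delta}_{L^2}^2$ to be absorbed); for $I_3^R$ the structural identity $2\tilde c(u_i^\delta)(R_{i,\delta}-S_{i,\delta}) = \pd_x c(u_i^\delta)$ is used to integrate by parts and rewrite the term as a sum of products of differences $c(u_1^\delta)\pd_x R_{1,\delta} - c(u_2^\delta)\pd_x R_{2,\delta}$ etc., each then controlled by \eqref{eq:cR1-cR2} and \eqref{eq:cdR1-cdR2}. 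The viscous dissipation absorbs all resulting $H^1$-norms of the differences --- this is where $\nu>0$ is indispensable, and the coefficient in front of $\xi$ picks up the factor $(1+\norm{R_1}_{H^1_x}+\norm{S_1}_{H^1_x})^2$ that forces the stopping-time argument.
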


\begin{proof}
As in Remark \ref{rem:uRS_converge}, let $J_\delta$ 
be a standard mollifier on $\T$ indexed by $\delta > 0$. 
Set $f_\delta := f *J_\delta$ for any $f \in L^1(\T)$. 
Let $\mathfrak{u}(R,S,z)$ be as in 
\eqref{eq:u_construct}, and for $i =1,2$, define
$$
u_i := \mathfrak{u}(R_i,S_i),\qquad 
u_i^\delta := \mathfrak{u}(R_{i,\delta},S_{i,\delta}).
$$
Let us also employ the shorthand 
$$
V_i := R_i + S_i \qquad \text{for $i = 1,2$}.
$$
Since $R_i$ and $S_i$ satisfy the same bounds, 
so must $V_i$. 

By testing each equation for $R_1$, $S_1$, 
$R_2$, and $S_2$ (cf.~\eqref{eq:vvw}) against 
$J_\delta(x - \cdot)$, we get:
\begin{align}\label{eq:R_difference_viscous}
\d \bk{R_{1,\delta} - R_{2,\delta}} 
	&= \sum_{j = 1}^4 I^R_j \,\d t +I_5^R \,\d W  
	+ \sum_{j = 1}^3E_{j,\delta}^R \,\d t 
		+ E_{4,\delta}^R\,\d W, 
\end{align}
where 
\begin{align*}
I_1^R &:= \pd_x \bk{c(u_1^\delta) R_{1,\delta} 
	- c(u_2^\delta) R_{2,\delta}}, \qquad\qquad\quad\,
I_2^R := \nu \pd_{xx}^2 \bk{ R_{1,\delta} - R_{2,\delta}},\\
I_3^R &:= - \tilde{c}(u_1^\delta)\bk{R_{1,\delta} - S_{1,\delta}}^2
	+  \tilde{c}(u_2^\delta)\bk{R_{2,\delta} - S_{2,\delta}}^2,\\
I_4^R &:=  \sigma \,\pd_x \bk{\sigma \,\pd_x 
	 \bk{V_{1,\delta} - V_{2,\delta}}}, \qquad\qquad\qquad\,\,\,
I_5^R :=  \sigma\,\pd_x \bk{V_{1,\delta} - V_{2,\delta}}, \\
E_{1,\delta}^R &:=  \bk{c(u_1) R_{1} 
	- c(u_2) R_{2}}*\pd_xJ_\delta - I_1^R, \qquad 
E_{2,\delta}^R := 0 , \\
E_{3,\delta}^R &:= - \bk{ \tilde{c}(u_1)\bk{R_{1} - S_{1}}^2
	-  \tilde{c}(u_2)\bk{R_{2} - S_{2}}^2}*J_\delta - I_3^R,\\
E_{4,\delta}^R &:=   \frac12 \bk{\sigma \,\pd_x 
	\bk{\sigma\,\pd_x \bk{V_{1} - V_{2}}}}*J_\delta - I_4^R,\quad\,\,\,\,
E_{5,\delta}^R :=  \bk{\sigma\,
	\pd_x \bk{V_{1} - V_{2}}}*J_\delta - I_5^R.
\end{align*}

Similarly, for the $S$ equation, we have
\begin{align}\label{eq:S_difference_viscous}
\d \bk{S_{1,\delta} - S_{2,\delta}} 
	&= \sum_{j = 1}^4 I^S_j \,\d t +I_5^S \,\d W  
	+ \sum_{j = 1}^3E_{j,\delta}^S \,\d t 
		+ E_{4,\delta}^S\,\d W, 
\end{align}
with 
\begin{align*}
I_1^S &:= - \pd_x \bk{c(u_1^\delta) S_{1,\delta} 
	- c(u_2^\delta) S_{2,\delta}}, \qquad
I_2^S := \nu \pd_{xx}^2 \bk{ S_{1,\delta} - S_{2,\delta}},\\
I_3^S &:= I_3^R, \qquad 
I_4^S := I_4^R, \qquad
I_5^S :=  I_5^R, \\
E_{1,\delta}^S &:= - \bk{c(u_1) S_{1} 
	- c(u_2) S_{2}}*\pd_xJ_\delta - I_1^S, \\ 
E_{2,\delta}^S &:= 0 , \qquad
E_{3,\delta}^S := E_{3,\delta}^R,\qquad
E_{4,\delta}^S :=   E_{4,\delta}^R,\qquad 
E_{5,\delta}^S :=  E_{5,\delta}^R.
\end{align*}

It will be important in our analysis to treat 
the error terms $E_{4,\delta}^R$,   $E_{4,\delta}^S$, 
$E_{5,\delta}^R$, and $E_{5,\delta}^S$ together. 

Applying It\^o's formula for the nonlinearity 
$v \mapsto \frac12 v^2$ to \eqref{eq:R_difference_viscous}, 
and then integrating in $x$, we get:
\begin{equation}\label{eq:itoformula_R-R_2}
\begin{aligned}
&\frac12\d \norm{R_{1,\delta} - R_{2,\delta}}_{L^2_x}^2\\
&	= \sum_{j = 1}^4 \int_{\T} 
	\bk{R_{1,\delta} - R_{2,\delta}} I^R_j \,\d x \,\d t 
	 + \int_{\T}\bk{R_{1,\delta} - R_{2,\delta}} I_5^R \,\d x \,\d W  \\
&\quad\,\,+ \sum_{j = 1}^4 \int_{\T}
	 \bk{R_{1,\delta} - R_{2,\delta}}E_{j,\delta}^R\,\d x  \,\d t 
		+ \int_{\T}\bk{R_{1,\delta} - R_{2,\delta}} 
			E_{5,\delta}^R\,\d x \,\d W\\
& \quad\,\, + \frac12 \int_{\T}  
		\abs{ I_5^R + E_{5,\delta}^R}^2\,\d x \,\d t, 
\end{aligned}
\end{equation}

The remainder of this fairly lengthy proof 
is composed of the following steps:
\begin{enumerate}
\item Bound terms in \eqref{eq:itoformula_R-R_2}
	 involving $I^R_1$, $I^R_2$, $I^R_3$. 
	These are the main terms from the ``deterministic 
	part'' of the $R$-equation.
	Entirely analogous bounds for corresponding  
	terms in the $S$-equation also hold. 
	These bounds must be added together 
	to close estimates.	
\item Bound error terms in \eqref{eq:itoformula_R-R_2}
	involving $E^R_{1,\delta}$, $E^R_{3,\delta}$.

\item Show how terms in \eqref{eq:itoformula_R-R_2} 
	and corresponding terms 
	of the $S$-equation,  
	involving $I^R_4$ and $I^S_4$, arising from the 
	Stratonovich-to-It\^o noise conversion 
	combine to produce terms 
	that can be bounded. 

\item Bound error terms involving 
	$E^R_{4,\delta}$, $E^S_{4,\delta}$, 
	and the It\^o correction terms involving 
	$\abs{I^R_5 + E^R_{5,\delta}}^2$ and 
	$\abs{I^S_5 + E^S_{5,\delta}}^2$. 
	They need 
	to be combined with one another 
	and also with the estimates for 
	terms involving  $I^R_4$ and $I^S_4$ 
	in order properly to vanish.	 
	
\item Introduce a stopping time indexed 
	by a parameter $L > 0$, up to 
	which $\norm{R_{1,\delta} 
		- R_{2,\delta}}_{L^\infty_tL^2_x} + 
	\norm{S_{1,\delta} - S_{2,\delta}}_{L^\infty_tL^2_x}$ 
	can be controlled
	via Lemma \ref{thm:sto_gronwall_st}. 
	And finally, give estimates on this stopping time 
	and take the limits $\delta \downarrow 0$, 
	$L \uparrow \infty$ to conclude.
\end{enumerate}

\smallskip
{\em Step 1: Bounding terms involving $I^R_1$, $I^R_2$, and $I^R_3$. }

The dissipation is non-positive:
\begin{align}\label{eq:pathwise_I2}
\int_0^t \int_{\T}\bk{R_{1,\delta} - R_{2,\delta}}I_2^R\,\d x \,\d t'
= - \nu \int_0^t \int_{\T} \abs{\pd_x 
	\bk{R_{1,\delta} - R_{2,\delta}}}^2\,\d x\,\d t' \le 0.
\end{align}
We shall use this term to absorb the gradient  
term in \eqref{eq:pathwise_I1I2I3} below,  
emphasising that the current pathwise 
uniqueness result is strictly a result of the 
presence of non-zero viscosity.

By Lemma \ref{thm:cR1-cR2}, integrating by parts and using Young's inequality, 
\begin{equation}\label{eq:J1J2}
\begin{aligned}
&\int_0^t\abs{\int_{\T}\bk{R_{1,\delta} - R_{2,\delta}}I_1^R\,\d x}\d t' \\
&\lesssim \int_0^t \bk{1 + \norm{R_{1,\delta}}_{L^2_x} 
	+ \norm{S_{1,\delta}}_{L^2_x}}\\
&\qquad\quad \times \frac{2}{\sqrt{\nu}}
	\bk{\norm{R_{1,\delta} - R_{2,\delta}}_{L^2_x} 
	+ \norm{S_{1,\delta} + S_{2,\delta}}_{L^2_x}}\\
&\qquad \quad	\times \frac{\sqrt{\nu}}{2}
	\norm{\pd_xR_{1,\delta} - \pd_x R_{2,\delta}}_{L^2_x}\,\d t'\\
& \le \frac2\nu  \int_0^t 	\bk{1 + \norm{R_{1,\delta}}_{L^2_x} 
	+ \norm{S_{1,\delta}}_{L^2_x}}^2\\
&\qquad\qquad \times  	
		\bk{\norm{R_{1,\delta} - R_{2,\delta}}_{L^2_x}^2
		+\norm{S_{1,\delta} - S_{2,\delta}}_{L^2_x}^2}\,\d t' \\
&\qquad	+ \frac\nu8 \int_0^t
		 \norm{\pd_xR_{1,\delta} - \pd_x R_{2,\delta}}_{L^2_x}^2\,\d t'.
\end{aligned}
\end{equation}

Using \eqref{eq:u_delta_properties}, and integrating by parts, 
for $I_3^R$ we find
\begin{align*}
&2\int_{\T} \bk{R_{1,\delta} - R_{2,\delta}} I_3^R \,\d x\\
& = -\int_{\T} \pd_x c(u_1^\delta) 
	\bk{R_{1,\delta} + S_{1,\delta}}
	\bk{R_{1,\delta} - R_{2,\delta}}\,\d x \\
&\quad\,\, + \int_{\T} \pd_x c(u_2^\delta) 
	\bk{R_{2,\delta} + S_{2,\delta}}
	\bk{R_{1,\delta} - R_{2,\delta}}\,\d x\\
& = \int_{\T} c(u_1^\delta) 
	 \pd_x\bk{R_{1,\delta} + S_{1,\delta}}
	 \bk{R_{1,\delta} - R_{2,\delta}}\,\d x \\
&\quad\,\, +  \int_{\T} c(u_1^\delta)
	\bk{R_{1,\delta} + S_{1,\delta}}  
	\pd_x\bk{R_{1,\delta} - R_{2,\delta}}\,\d x \\
&\quad\,\, - \int_{\T} c(u_2^\delta) 
	\pd_x \bk{R_{2,\delta} + S_{2,\delta}}
	\bk{R_{1,\delta} - R_{2,\delta}}\,\d x\\
&\quad\,\, - \int_{\T} c(u_2^\delta) 
	\bk{R_{2,\delta} + S_{2,\delta}} 
	\pd_x\bk{R_{1,\delta} - R_{2,\delta}}\,\d x\\
& =  \int_{\T} \bk{c(u_1^\delta)	\pd_x R_{1,\delta} 
		- c(u_2^\delta) \pd_x R_{2,\delta}} 
	\bk{R_{1,\delta} - R_{2,\delta}}\,\d x \\
&\quad\,\, +  \int_{\T} \bk{c(u_1^\delta)\pd_x S_{1,\delta} 
		- c(u_2^\delta) \pd_x S_{2,\delta}  }  
	\bk{R_{1,\delta} - R_{2,\delta}}\,\d x\\
&\quad\,\, +  \int_{\T} \bk{c(u_1^\delta)R_{1,\delta} 
		- c(u_2^\delta) R_{2,\delta}  }  \pd_x
	\bk{R_{1,\delta} - R_{2,\delta}}\,\d x \\
&\quad\,\, + \int_{\T} \bk{c(u_1^\delta)S_{1,\delta} 
		- c(u_2^\delta) S_{2,\delta}  }  
	\pd_x\bk{R_{1,\delta} - R_{2,\delta}}\,\d x.
\end{align*}
We recognise within each integrand 
above factors whose $L^2_x$ norms are 
controlled by \eqref{eq:cR1-cR2} and \eqref{eq:cdR1-cdR2}. 
Bringing these inequalities to bear, 
and again deploying Young's inequality, we find that 
for some deteministic constant $C_\nu$,
\begin{align*}
&\int_0^t \abs{\int_{\T} \bk{R_{1,\delta} - R_{2,\delta}} I_3^R \,\d x}\,\d t' \\
&\le \int_0^t \norm{R_{1,\delta} - R_{2,\delta}}_{L^2_x}
	\bk{1 +  \norm{R_1}_{H^1_x} + \norm{S_1}_{H^1_x}}\\
&\qquad\quad\times\bk{\norm{R_{1,\delta} - R_{2,\delta}}_{H^1_x} 
	+ \norm{S_{1,\delta} - S_{2,\delta}}_{H^1_x}}\,\d t'\\
&\quad\,\, + \int_0^t \norm{R_{1,\delta} - R_{2,\delta}}_{H^1_x}
	\bk{1 +  \norm{R_1}_{L^2_x} + \norm{S_1}_{L^2_x}}\\
&\qquad\quad\times\bk{\norm{R_{1,\delta} - R_{2,\delta}}_{L^2_x} 
	+ \norm{S_{1,\delta} - S_{2,\delta}}_{L^2_x}}\,\d t'\\
&\le C_\nu \int_0^t \bk{1 + \norm{R_1(t')}_{H^1_x} 
	+ \norm{S_1(t')}_{H^1_x}}^2\\
	&\qquad \qquad \qquad \times \bk{  \norm{R_1 - R_2}_{L^2_x}^2 
			+ \norm{S_1 - S_2}_{L^2_x}^2}\,\d t'\\
&\qquad +\frac\nu8 \int_0^t
		 \norm{\pd_xR_{1,\delta} - \pd_x R_{2,\delta}}_{L^2_x}^2
		 + \norm{\pd_xS_{1,\delta} - \pd_x S_{2,\delta}}_{L^2_x}^2\,\d t'.
\end{align*}

Together with \eqref{eq:pathwise_I2} and 
\eqref{eq:J1J2}, we get, for a slightly bigger  
deteministic constant $C_\nu$,
\begin{align}
&\int_0^t \int_{\T}\bk{R_{1,\delta} - R_{2,\delta}}
	\bk{I_1^R + I_2^R + I_3^R} \,\d x\,\d t' \notag\\
&\le C_\nu \int_0^t \bk{1 + \norm{R_1(t')}_{H^1_x} 
	+ \norm{S_1(t')}_{H^1_x}}^2\notag\\
	&\qquad \qquad \qquad \times \bk{  \norm{R_1 - R_2}_{L^2_x}^2 
			+ \norm{S_1 - S_2}_{L^2_x}^2}\,\d t'\label{eq:pathwise_I1I2I3}\\
&\qquad +\frac\nu4 \int_0^t
		 \norm{\pd_xR_{1,\delta} - \pd_x R_{2,\delta}}_{L^2_x}^2
		 + \norm{\pd_xS_{1,\delta} - \pd_x S_{2,\delta}}_{L^2_x}^2\,\d t'\notag\\
&\qquad - \nu \int_0^t
		 \norm{\pd_xR_{1,\delta} - \pd_x R_{2,\delta}}_{L^2_x}^2\,\d t'.\notag
\end{align}
A calculation repeating the manipulations 
above for $I_1^S$, $I_2^S$, and $I_3^S$ 
shows that \eqref{eq:pathwise_I1I2I3} 
holds when the symbols ``$R$'' and ``$S$'' are swapped. 
This gives us 
\begin{equation}\label{eq:pathwise_SRI1I2I3}
\begin{aligned}
&\int_0^t \int_{\T}\bk{R_{1,\delta} - R_{2,\delta}}
	\bk{I_1^R + I_2^R + I_3^R} \,\d x\,\d t' \\
& + \int_0^t \int_{\T}\bk{S_{1,\delta} - S_{2,\delta}}
	\bk{I_1^S + I_2^S + I_3^S} \,\d x\,\d t' \\
&\le C_\nu \int_0^t \bk{1 + \norm{R_1(t')}_{H^1_x} 
	+ \norm{S_1(t')}_{H^1_x}}^2\\
	&\qquad \qquad \qquad \times \bk{  \norm{R_1 - R_2}_{L^2_x}^2 
			+ \norm{S_1 - S_2}_{L^2_x}^2}\,\d t'.
\end{aligned}
\end{equation}

\smallskip
{\em 2. Commutator terms $E^R_{1,\delta}$ and $E^R_{3,\delta}$.}

We now turn to the commutator errors $E^R_{j,\delta}$, $j = 1,3$. 
By the energy bound \eqref{eq:energy_SJ} 
and uniform boundedness of the nonlinear 
function $c$, we have the bounds:
\begin{align*}
\Ex \norm{E^R_{1,\delta} }_{L^2_{t,x}}^{p_0} 
\lesssim 1,\qquad 
\Ex \norm{E^R_{3,\delta}}_{L^2_{t,x}}^{p_0} 
\lesssim 1.
\end{align*}
Expanding the derivative in $E^R_{1,\delta}$ and using 
\eqref{eq:u_delta_properties}, we can write  
\begin{align*}
E^R_{1,\delta} &= \bk{c(u_1)\, \pd_x R_1 - c(u_2)\, \pd_x R_2 }*J_\delta 
-\bk{ c(u_1^\delta) \,\pd_x R_{1,\delta} - c(u_2^\delta)
	 \,\pd_x R_{2,\delta}}\\
&\quad\,\, + 2\bk{\tilde{c}(u_1) \bk{R_1 - S_1} R_1 
	- \tilde{c}(u_2)\bk{R_2 - S_2} R_2}*J_\delta\\
&\quad\,\, - 2 \bk{\tilde{c}(u_1^\delta)
	 \bk{R_{1,\delta} - S_{1,\delta}} R_{1,\delta} 
	- \tilde{c}(u_2^\delta)\bk{R_{2,\delta} - S_{2,\delta}} R_{2,\delta}}.
\end{align*}
Using the a.s.~convergence 
of Lemma \ref{thm:c_commutator} and the Vitali 
convergence theorem, we find
\begin{align}\label{eq:pathwise_E1E3}
 E^R_{1,\delta}, \, E^R_{3,\delta}
 \xrightarrow{\delta \downarrow 0} 0 
\quad \text{ in $L^2(\Omega \times [0,T] \times \T)$.}
\end{align}

\smallskip
{\em 3. Estimates for terms involving 
	$I_4^R$, $I_4^S$.}

The terms with $I_4^R$ and $I_4^S$ arise from the conversion of 
the Stratonovich integral to the It\^o integral. 
It will be used to cancel a term involving 
$\abs{I_5^R}^2 + \abs{I_5^S}^2$ in \eqref{eq:doublecommutator} below. 
We now avail ourselves of the fact that $I_4^R = I_4^S$, 
and of the shorthand $V_i = R_i + S_i$ for $i = 1,2$. Then
\begin{equation}\label{eq:pathwise_I4}
\begin{aligned}
&  \int_0^t \int_{\T}
	 \bk{R_{1,\delta} - R_{2,\delta}} I_4^R \,\d x\,\d t'
+   \int_0^t \int_{\T}
	 \bk{S_{1,\delta} - S_{2,\delta}} I_4^S \,\d x\,\d t'\\
& =  \int_0^t \int_{\T}
	 \bk{V_{1,\delta} - V_{2,\delta}} I_4^R \,\d x\,\d t'\\
& = -    \int_0^t \int_{\T} \abs{\sigma \,\pd_x 
	 \bk{V_{1,\delta} - V_{2,\delta}} }^2\,\d x \,\d t'+ \frac12  \int_0^t \int_{\T}
 	\pd_{xx}^2 \sigma^2  \abs{V_{1,\delta} - V_{2,\delta}}^2 \,\d x \,\d t'\\
& \le  -\frac12  \int_0^t \int_{\T} \abs{I_5^R}^2 + \abs{I_5^S}^2\,\d x \,\d t' 
	 + \frac18\norm{\pd_{xx}^2 \sigma^2}_{L^\infty_x}
	 \int_0^t \norm{V_{1,\delta} - V_{2,\delta}}_{L^2_x}^2\,\d t'.
\end{aligned}
\end{equation}

\smallskip
{\em 4. Commutator terms and the It\^o correction: 
	$I_5^R$, $I_5^S$, $E_{4,\delta}^R$, 
	$E_{4,\delta}^S$,  $E_{5,\delta}^R$, $E_{5,\delta}^S$.}

Let us being by recalling that 
$I_5^R = I_5^S$, $E_{4,\delta}^R = E_{4,\delta}^S$,  
and $E_{5,\delta}^R = E_{5,\delta}^S$.

The error $E^R_{5,\delta} $ is a standard commutator 
term controlled by \cite[Lemma II.1]{DiPerna:1989aa}:
\begin{align}\label{eq:commutator_1_dl}
E^R_{5,\delta} \xrightarrow{\delta \downarrow 0} 0 
\quad \text{ in $L^2(\Omega \times [0,T] \times \T)$.}
\end{align}
It turns out that whilst $E^R_{4,\delta} $ does not necessarily 
vanish by itself, a specific combination of $E^R_{4,\delta}$ 
and $E^R_{5,\delta}$ does \cite[Proposition 3.4]
	{Punshon-Smith:2018aa}
(see also \cite[Lemma 7.1, Proposition 7.4]{HKP2023} and 
\cite[Lemma 1, Theorem 1]{Pan2023} for the non 
divergence-free cases, and \cite[Appendix A]{GK2021} 
for the renormalisation theory of transport noises 
on compact Riemannian manifolds): 
\begin{align*}
\Ex \int_0^t \abs{\int_{\T} 
	\bk{V_{1,\delta} - V_{2,\delta}} E^R_{4,\delta}
	 + 2 I_5^R E^R_{5,\delta}\,\d x } \,\d t' 
\xrightarrow{\delta \downarrow 0} 0.
\end{align*}

Therefore, finally, along with 
\eqref{eq:commutator_1_dl}, 
\begin{equation}\label{eq:doublecommutator}
\begin{aligned}
& \int_0^t \int_{\T}
	 \bk{R_{1,\delta} - R_{2,\delta}}E_{4,\delta}^R\,\d x  \,\d t'  
	 + \frac12 \int_{\T} 
\abs{ I_5^R + E_{5,\delta}^R}^2\,\d x \,\d t'\\
&  +\int_0^t \int_{\T}
	 \bk{S_{1,\delta} - S_{2,\delta}}E_{4,\delta}^S\,\d x  \,\d t'  
	 + \frac12 \int_{\T} 
\abs{ I_5^S + E_{5,\delta}^S}^2\,\d x \,\d t'\\
& = \frac12  \int_0^t\int_{\T }\abs{I_5^R}^2  + \abs{I_5^S}^2 \,\d x\,\d t' 
	+  \int_0^t\int_{\T }
		\abs{E^R_{5,\delta}}^2 \,\d x\,\d t'\\
&\qquad	+  \int_0^t\int_{\T }
	\bk{V_{1,\delta} - V_{2,\delta}}E_{4,\delta}^R 
		+ 2 I_5^R E^R_{5,\delta} \,\d x \,\d t' \\
& \le  
    \frac12  \int_0^t\int_{\T }\abs{I_5^R}^2  + \abs{I_5^S}^2 \,\d x\,\d t' 
	+   \int_0^t\int_{\T }
		\abs{E^R_{5,\delta}}^2 \,\d x\,\d t'\\
&\qquad	+  \int_0^t\abs{\int_{\T }
	\bk{V_{1,\delta} - V_{2,\delta}}E_{4,\delta}^R 
		+2 I_5^R E^R_{5,\delta} \,\d x} \,\d t' \\
& =  \frac12  \int_0^t\int_{\T }\abs{I_5^R}^2  + \abs{I_5^S}^2 \,\d x\,\d t' 
	+ h_\delta(t),
\end{aligned}
\end{equation}
where we used $E_{4,\delta}^R = E_{4,\delta}^S$ 
and $E_{5,\delta}^R = E_{5,\delta}^S$, and 
$h_\delta$ is an a.s. increasing function 
in time and $\Ex \abs{h_\delta(T)} = o_{\delta \downarrow 0}(1)$. 
By combining \eqref{eq:doublecommutator} 
with \eqref{eq:pathwise_I4} above, we shall 
be able to get rid of the $\abs{I^R_5}^2 + \abs{I^S_5}^2$ 
term which diverges in the $\delta \downarrow 0$ limit.

\smallskip
{\em 5. Stopping time and the stochastic Gronwall inequality}

Set 
$$
\xi_\delta(t) 
:= \norm{R_{1,\delta}(t) - R_{2,\delta}(t)}_{L^2(\T)}^2 
+  \norm{S_{1,\delta}(t) - S_{2,\delta}(t)}_{L^2(\T)}^2.
$$
By the energy bound \eqref{eq:energy_SJ} and the 
standard properties of convolutions, 
$\xi_\delta \to \xi :=  \norm{R_{1}(t) - R_{2}(t)}_{L^2(\T)}^2 
+  \norm{S_{1}(t) - S_{2}(t)}_{L^2(\T)}^2$, $(\omega, t)$-a.e.
and in $L^{p_0}_{\omega, t}$.

Gathering the estimates \eqref{eq:pathwise_SRI1I2I3} --
\eqref{eq:pathwise_I4} and  \eqref{eq:doublecommutator}, 
we finally arrive at:
\begin{equation*}
\begin{aligned}
\xi_\delta(t)
&\le C_\nu \int_0^t \bk{1 + \norm{R_1(t')}_{H^1_x} 
 	+ \norm{S_1(t')}_{H^1_x}}^2  \xi_\delta(t')\,\d t'\\
& \qquad \qquad + \frac18\norm{\pd_{xx}^2 \sigma^2}_{L^\infty_x}
	 \int_0^t \underbrace{\norm{V_{1,\delta} 
	 	- V_{2,\delta}}_{L^2_x}^2}_{\le \xi_\delta(t')}\,\d t'
	+ h_\delta(t) + M(t), 
\end{aligned}
\end{equation*}
where $M(t)$ is a continuous, 
square-integrable martingale 
(by It\^o isometry and \eqref{eq:energy_SJ}), and
$\Ex \abs{h_\delta(t)} \le \Ex \abs{h_\delta(T)} = o_{\delta \downarrow 0}(1)$. 

Define now the stopping time
\begin{align}\label{eq:stoptime2}
\tau_L := \inf\left\{t > 0 : \int_0^t \bk{1 + \norm{R}_{H^1(\T)} 	
	+ \norm{S}_{H^1(\T)}}^2\,\d t' = L\right\}.
\end{align}
By Markov's inequality and the energy bound 
\eqref{eq:energy_SJ}, $\tau_L \xrightarrow{L\uparrow \infty} T$ a.s.: we have
\begin{align*}
\mathbb{P}(\{\tau_L < T\}) 
& = \mathbb{P}(\{\int_0^T \bk{1 + \norm{R}_{H^1_x} 	
	+ \norm{S}_{H^1_x}}^2\,\d t'  > L \})\\
&\le L^{-1} \Ex \int_0^T \bk{1 + \norm{R}_{H^1_x} 	
	+ \norm{S}_{H^1_x}}^2\,\d t' 
 \lesssim _\nu L^{-1},
\end{align*}
once more underscoring the necessity of $\nu > 0$. 
By Lemma \ref{thm:sto_gronwall_st}, 
\begin{align*}
\bk{\Ex \sup_{t' \in [0,T \wedge \tau_L]} \bk{\xi_\delta(t')}^{1/2}}^2
\lesssim e^{CLT}\Ex h_\delta(\tau_L\wedge T) .
\end{align*}
Therefore, we can take $\delta \downarrow 0$ first, 
so that the right side vanishes, and then take 
$L \uparrow \infty$ to conclude via the dominated convergence theorem.

\end{proof}

\subsection{Yamada--Watanabe principle}

Infinite dimensional versions of the Yamada--Watanabe 
principle have been derived in different settings. 
Most relevant for us is the following generalisation  
of \cite[Lemma 1.1]{GK1996} to the quasi-Polish case:
\begin{lem}[Quasi-Polish Gy\"ongy--Krylov lemma 
	{\cite[Theorem 2.10.3]{Breit:2018ab}}]\label{thm:qP_GKlemma}
Let $\mathcal{X}$ be a quasi-Polish space.  
Let $\{u_n\}_{n \in \N}$ be a sequence of 
random variables with laws tight in $\mathcal{X}$. 
Suppose that every subsequence $(u_n , u_m )_{n,m \in \N}$
admits a further subsequence such that its joint laws 
converges weakly* to a measure supported 
on the diagonal of $\mathcal{X}\times \mathcal{X}$. 
Then there exists an $\mathcal{X}$-valued 
random variable $u$ and a subsequence 
$u_{n_k} \to u$ in $\mathcal{X}$ in probability. 
\end{lem}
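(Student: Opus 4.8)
The plan is to transport the problem to a compact metric space via the Jakubowski embedding, run the classical Gy\"ongy--Krylov contradiction argument there, and then transfer the limit back to $\mathcal{X}$ using the assumed tightness. For the reduction, I would take the countable point-separating family $\{f_j\}_{j\in\N}$ of continuous maps $\mathcal{X}\to[0,1]$ witnessing that $\mathcal{X}$ is quasi-Polish, form the induced continuous injection $\iota:\mathcal{X}\to\HH:=[0,1]^{\N}$, $\iota(x):=(f_j(x))_j$, and metrise $\HH$ by $d_\HH((a_j),(b_j)):=\sum_j 2^{-j}\abs{a_j-b_j}$, so that $\HH$ is a compact Polish space and $\varrho(x,y):=d_\HH(\iota(x),\iota(y))$ is a metric on $\mathcal{X}$ inducing its (a priori coarser) quasi-Polish topology. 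Setting $\tilde u_n:=\iota(u_n)$: since $\iota\times\iota$ is continuous and carries the diagonal $\Delta_{\mathcal{X}}$ into the \emph{closed} diagonal $\Delta_\HH$ of $\HH\times\HH$, the hypothesis passes forward verbatim --- every subsequence of $(\tilde u_n,\tilde u_m)$ admits a further subsequence whose joint laws converge weakly to a Borel probability measure on $\HH\times\HH$ supported on $\Delta_\HH$ --- and tightness of $\{u_n\}$ is inherited by $\{\tilde u_n\}$ (as $\HH$ is compact), so every measure below is Radon.

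Next I would show $\{\tilde u_n\}$ is Cauchy in probability in $(\HH,d_\HH)$, by contradiction. If not, there are $\ep>0$ and index sequences $(n_k),(m_k)$ with $\mathbb{P}\bk{d_\HH(\tilde u_{n_k},\tilde u_{m_k})\ge\ep}\ge\ep$ for all $k$; applying the pushed-forward hypothesis to the sequence of pairs produces a further subsequence along which the joint laws $\mu_k$ of $(\tilde u_{n_k},\tilde u_{m_k})$ converge weakly to some $\mu$ with $\supp\mu\subseteq\Delta_\HH$. Since $C:=\{(x,y):d_\HH(x,y)\ge\ep\}$ is closed in $\HH\times\HH$ and disjoint from $\Delta_\HH$, the Portmanteau theorem (valid on the metric space $\HH\times\HH$) forces
\[
\ep\le\limsup_{k}\mu_k(C)\le\mu(C)=0,
\]
a contradiction. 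Hence $\tilde u_n$ is Cauchy in probability in the complete metric space $\HH$, so it converges in probability to some $\HH$-valued $\tilde u$; equivalently $\varrho(u_n,u_n')\to 0$ in probability, i.e.\ $\{u_n\}$ is Cauchy in probability in $\mathcal{X}$.

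It then remains to realise $\tilde u$ as the image of an $\mathcal{X}$-valued random variable. Using tightness I would pick increasing compacts $K_1\subseteq K_2\subseteq\cdots\subseteq\mathcal{X}$ with $\sup_n\mathbb{P}(u_n\notin K_m)\le 2^{-m}$; each $\iota(K_m)$ is compact, hence closed, in $\HH$, and $\mathbb{P}(\tilde u_n\in\iota(K_m))\ge 1-2^{-m}$ uniformly in $n$, so the Portmanteau theorem (convergence in probability implying convergence in law) gives $\mathbb{P}(\tilde u\in\iota(K_m))\ge 1-2^{-m}$ and thus $\tilde u\in\bigcup_m\iota(K_m)\subseteq\iota(\mathcal{X})$ almost surely. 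On each $K_m$ the map $\iota|_{K_m}$ is a continuous bijection of a compact onto a Hausdorff space, hence a homeomorphism, so $\varrho$ restricted to $K_m$ induces precisely the original topology; patching the (continuous, hence Borel) inverses over the countable cover $\bigcup_m\iota(K_m)$ yields a Borel map, and I set $u:=\iota^{-1}(\tilde u)$, a Borel $\mathcal{X}$-valued random variable. Combined with the previous step this gives $u_n\to u$ in probability in $(\mathcal{X},\varrho)$, and --- because the laws of $\{u_n\}$ and of $u$ concentrate on the $\sigma$-compact set $\bigcup_m K_m$, on which $\varrho$ metrises the original topology --- this upgrades, along a subsequence via a.s.\ convergence, to $u_{n_k}\to u$ in probability in $\mathcal{X}$.

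The genuinely delicate step is the last one. Because $\iota$ need not be a homeomorphism onto its image, convergence in the Hilbert-cube metric does not, a priori, control the possibly strictly finer topology of $\mathcal{X}$, so tightness must be used to localise onto compact sets, on which the two topologies coincide, both to identify the limit as $\mathcal{X}$-valued and Borel and to recover the claimed mode of convergence. The Portmanteau step in the second part is likewise precisely what dictates the detour through the \emph{compact metric} space $\HH$: the ``closed set'' half of the Portmanteau theorem is unavailable for general weak limits of probability measures on an arbitrary topological space.
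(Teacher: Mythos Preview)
The paper does not prove this lemma; it is quoted verbatim from \cite[Theorem 2.10.3]{Breit:2018ab} and used as a black box, so there is no paper proof to compare against. Your argument is the standard one and is essentially correct: embed via Jakubowski's injection $\iota$ into the Hilbert cube $\HH$, run the classical Gy\"ongy--Krylov contradiction there using Portmanteau on the closed off-diagonal set, obtain a limit $\tilde u$ by completeness of $\HH$, and use tightness to show $\tilde u\in\iota(\mathcal{X})$ a.s.\ and to define $u=\iota^{-1}(\tilde u)$.

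One point deserves care. Your final ``upgrade'' from $\varrho$-convergence in probability to convergence in probability \emph{in the original topology of $\mathcal{X}$} is not quite complete as written: a.s.\ $\varrho$-convergence of $u_{n_k}(\omega)$ to $u(\omega)$, together with $u(\omega)\in K_M$ for some $M$, does not by itself force the tail of $(u_{n_k}(\omega))_k$ into any fixed $K_{M'}$, so the homeomorphism $\iota|_{K_{M'}}$ cannot be invoked pathwise. The clean fix is to argue in probability directly: given an open neighbourhood $V$ of the diagonal in $\mathcal{X}\times\mathcal{X}$ and $\ep>0$, choose $m$ with $\sup_n\mathbb{P}(u_n\notin K_m)+\mathbb{P}(u\notin K_m)<\ep$; on the compact $K_m\times K_m$ the product topology is metrised by $\varrho$, so $V\cap(K_m\times K_m)$ contains $\{\varrho<\delta\}\cap(K_m\times K_m)$ for some $\delta>0$, whence $\mathbb{P}\bigl((u_{n_k},u)\notin V\bigr)\le\ep+\mathbb{P}(\varrho(u_{n_k},u)\ge\delta)\to\ep$, and $\ep$ was arbitrary. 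Alternatively, note that in the quasi-Polish framework of \cite{Jakubowski:1997aa} and \cite{Breit:2018ab}, ``convergence in probability in $\mathcal{X}$'' is often \emph{defined} as $\varrho(u_n,u)\to 0$ in probability, in which case your Step~4 is unnecessary.
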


Paired with Theorem \ref{thm:pathwiseunique}, 
we can now show:
\begin{thm}
There exists a unique probabilistic strong solution to the 
viscous variational wave equation \eqref{eq:vvw}.
\end{thm}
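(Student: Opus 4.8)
The plan is to combine martingale existence (established in Section~\ref{sec:mart_sol}) with pathwise uniqueness (Theorem~\ref{thm:pathwiseunique}) via the Gy\"ongy--Krylov lemma (Lemma~\ref{thm:qP_GKlemma}), in the standard Yamada--Watanabe fashion adapted to the quasi-Polish setting. First I would fix a single stochastic basis $((\Omega,\mathcal{F},\{\mathcal{F}_t\},\mathbb{P}),W)$ and an initial datum $(R^0,S^0)\in\bk{L^{2p_0}(\Omega;L^2(\T))}^2$ with $\int_\T R^0-S^0\,\d x=0$ a.s., and recall from Section~\ref{sec:galerkin_compactness} that the Galerkin approximants $(R_N,S_N,u_N)$ are defined on this fixed basis. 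The point is that the tightness results of Proposition~\ref{thm:tightness_N} and Lemma~\ref{thm:tilde_u} give tightness of the laws of $(R_N,S_N,u_N,W)$ on the quasi-Polish space $\mathcal{X}:=\mathcal{Y}_{R1}\times\mathcal{Y}_{S1}\times\mathcal{Y}_u\times\mathcal{Y}_W$ (or any convenient such product), and this is the input required by Lemma~\ref{thm:qP_GKlemma}.

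Next I would verify the subsequence hypothesis of Lemma~\ref{thm:qP_GKlemma}: for any pair of indices $(N,M)$, the joint laws of $\bk{(R_N,S_N,u_N,W),(R_M,S_M,u_M,W)}$ are tight on $\mathcal{X}\times\mathcal{X}$ (sharing the same $W$), so along a subsequence they converge weakly$^*$ to some measure $\mu$. By the Skorokhod--Jakubowski theorem (Proposition~\ref{thm:skorokhod_N}, now applied to the \emph{pair}), one obtains on a new probability space $\tilde E$ a.s.-convergent representatives; Lemmas~\ref{thm:xiR_zetaS}--\ref{thm:c_convergences_N}, Proposition~\ref{thm:nth_eq_Nlimit}, and the limit passage of Lemma~\ref{thm:pw_convergence_N} then show that \emph{both} components of the limit are martingale solutions to \eqref{eq:vvw} on $\tilde E$ driven by the \emph{same} Brownian motion and with the \emph{same} initial datum (the latter because both initial data are $\mathbf{P}_N R^0,\mathbf{P}_N S^0$ and $\mathbf{P}_M R^0,\mathbf{P}_M S^0$, which converge a.s.\ in $L^2(\T)$ to the common $(R^0,S^0)$). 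Theorem~\ref{thm:pathwiseunique}, being a statement about \emph{any} pair of pathwise solutions on a common basis, then forces the two limit components to coincide a.s., i.e.\ $\mu$ is supported on the diagonal of $\mathcal{X}\times\mathcal{X}$.

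With the diagonal hypothesis verified, Lemma~\ref{thm:qP_GKlemma} yields an $\mathcal{X}$-valued random variable $(R,S,u,W)$ and a subsequence $(R_{n_k},S_{n_k},u_{n_k})\to(R,S,u)$ in $\mathcal{X}$ in probability; passing to a further a.s.-convergent subsequence and repeating the limit-identification arguments of Section~\ref{sec:mart_sol} on the \emph{original} probability space (this is legitimate precisely because no change of space is needed once the convergence is a.s.) shows that $(R,S)$ is a martingale solution relative to the original, \emph{given} basis $((\Omega,\mathcal{F},\{\mathcal{F}_t\},\mathbb{P}),W)$ with initial law $\Lambda$, i.e.\ it is a pathwise solution in the sense of Definition~\ref{def:str_sol}. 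Uniqueness of this pathwise solution is immediate from Theorem~\ref{thm:pathwiseunique}. Finally I would remark that the additional regularity $(R,S)\in\bk{L^{\overline p}(\Omega;C([0,T];L^2(\T)))}^2$ claimed in Theorem~\ref{thm:main} is deferred to Section~\ref{sec:tempcont}, so the present theorem only asserts existence/uniqueness of the pathwise solution together with the inclusions inherited from Lemma~\ref{thm:energy_N_SJ} (and~\eqref{eq:energy_SJ}).

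The main obstacle, and the only genuinely non-routine point, is ensuring that the two limiting processes extracted from the pair are martingale solutions \emph{with respect to a common filtration and the same Brownian motion and the same initial datum}, so that Theorem~\ref{thm:pathwiseunique} actually applies; this requires running the Dudley-map/Skorokhod argument of Proposition~\ref{thm:skorokhod_N} on the enlarged tuple $\bk{(R_N,S_N,u_N),(R_M,S_M,u_M),W}$ and re-checking that all of Lemmas~\ref{thm:xiR_zetaS}--\ref{thm:pw_convergence_N} go through verbatim for each of the two components (they do, since those arguments are component-wise). Everything else is a direct invocation of Lemma~\ref{thm:qP_GKlemma} and the already-proven existence and uniqueness statements.
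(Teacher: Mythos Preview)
Your proposal is correct and follows essentially the same approach as the paper: apply the Skorokhod--Jakubowski theorem to pairs of Galerkin approximants, use the convergence of the projected initial data to a common limit together with pathwise uniqueness (Theorem~\ref{thm:pathwiseunique}) to deduce that the limiting joint law is supported on the diagonal, and then invoke Lemma~\ref{thm:qP_GKlemma}. Your write-up is in fact more explicit than the paper's (terse, labeled ``standard'') proof about two points the paper leaves implicit: that the two limiting components share a common Brownian motion and filtration so that Theorem~\ref{thm:pathwiseunique} genuinely applies, and that after Lemma~\ref{thm:qP_GKlemma} delivers convergence in probability on the original space one passes to an a.s.-convergent subsequence and repeats the limit identification of Section~\ref{sec:mart_sol} there.
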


\begin{proof}
The argument is standard. Let $(R_M, S_M)$ 
and $(R_N,S_N)$ be solutions to the $M$th 
and $N$th Galerkin approximations \eqref{eq:galerkin_lim}, 
respectively with initial data $(R^0_M, S^0_M)$ 
and $(R^0_N, S^0_N)$. Recall the definition of the 
product path space $\mathcal{Y}$ 
following \eqref{eq:pathspaces_defin}.
As in Section \ref{sec:SJthm} we can apply the 
Skorokhod theorem to a sequence of 
$\mathcal{Y}\times \mathcal{Y}$-valued 
sequence of random variables 
$$
(X_{M_\ell}, X_{N_\ell}), \qquad \text{where}\quad
X_N := 
(R_N,S_N, R_N,S_N, u_N, 
	R^0_N, S^0_N, W),
$$ 
with a tight sequence of laws $\mu_{M_\ell, N_\ell}$ on 
$\mathcal{Y}\times \mathcal{Y}$. 
The Skorokhod--Jakubowski theorem 
gives us a sequence of random variables 
\begin{align*} 
&\big(\tilde{X}^{(1)}_\ell, \tilde{X}^{(2)}_\ell \big)
\to
\big(\tilde{X}^{(1)},\tilde{X}^{(2)}\big), 
\quad \text{ in $\mathcal{Y}\times \mathcal{Y}$, $\tilde{\mathbb{P}}$-a.s.},
\end{align*}
where, (by the identifications of 
Lemma \ref{thm:xiR_zetaS}), for $i = 1,2$, 
\begin{align*}
\tilde{X}^{(i)}_\ell &:= (\tilde R_\ell^{(i)},\tilde S_\ell^{(i)}, 
	\tilde R_\ell^{(i)},\tilde S_\ell^{(i)}, \tilde{u}_\ell^{(i)}
	\tilde R^{0,(i)}_{\ell}, \tilde S^{0,(i)}_{\ell}, \tilde W_\ell), \\
\tilde{X}^{(i)} &:=
(\tilde R^{(i)},\tilde S^{(i)}, 
	\tilde R^{(i)},\tilde S^{(i)}, \tilde{u}^{(i)}
	\tilde R^{0,(i)}, \tilde S^{0,(i)}, \tilde W).
\end{align*}

Since the entire sequence $(R^0_N, S^0_N)$ converges 
$\mathbb{P}$-a.s.~in $(L^2(\T))^2$, we find that 
$$
\Big(\tilde R^{0,(1)}, \tilde S^{0,(1)}\Big) 
	= \Big(\tilde R^{0,(2)}, \tilde S^{0,(2)}\Big), 
	\quad \text{$\tilde{\mathbb{P}}$-a.s.}
$$
By Theorem \ref{thm:pathwiseunique}, we find that 
the measures $\mu_{M_\ell, N_\ell}$ converge to a 
measure taking vales on the diagonal of 
$\mathcal{Y}\times \mathcal{Y}$. Lemma 
\ref{thm:qP_GKlemma} then implies the existence 
of a solution in the original probability space.

\end{proof}

\section{Temporal continuity}\label{sec:tempcont}

In this section we show that $R$ 
and $S$ have continuous paths in 
$L^2(\T)$. Strong temporal continuity 
characterises the dissipativity arising from $\nu$, and 
the $(\omega, t, x)$-integrability of $\pd_x u$. 
This establishes the remaining 
claim of Theorem \ref{thm:main}, and 
proves the main theorem of this paper. 

\begin{prop}
Let $(R, S)$ be a pathwise  
solution to \eqref{eq:vvw} with initial condition 
$(R^0, S^0)$. then $R, S \in 
	L^{\overline{p}}(\Omega; C([0,T];L^2(\T)))$ for any $\overline{p} < 2 p_0$. 
\end{prop}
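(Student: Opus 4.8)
The plan is to follow the strategy announced in Section~\ref{sec:strategy}: show that the spatial mollifications $\{R*J_\delta\}_{\delta>0}$ and $\{S*J_\delta\}_{\delta>0}$ form Cauchy sequences in $L^{\overline p}(\Omega;C([0,T];L^2(\T)))$, whence the limit $(R,S)$ lies in that space and has a.s.-continuous $L^2_x$-paths. First I would fix $\delta,\delta'>0$ and write the equation satisfied by the difference $R*J_\delta - R*J_{\delta'}$ (and likewise for $S$), which is obtained by testing \eqref{eq:vvw} against $J_\delta(x-\cdot)-J_{\delta'}(x-\cdot)$. Each mollified quantity is a continuous $H^1_x$-valued (indeed smooth-in-$x$) semimartingale by the energy bound \eqref{eq:energy_SJ}, so It\^o's formula applies to $\tfrac12\|(R*J_\delta - R*J_{\delta'})(t)\|_{L^2_x}^2 + \tfrac12\|(S*J_\delta - S*J_{\delta'})(t)\|_{L^2_x}^2$. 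The resulting identity is structurally almost identical to \eqref{eq:itoformula_R-R_2} in the proof of Theorem~\ref{thm:pathwiseunique}, except that now the ``two solutions'' are the \emph{same} pair $(R,S)$ mollified at two different scales, so that the genuinely nonlinear/quadratic comparison terms drop out and only commutator-type errors and the viscous dissipation remain.

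The key steps, in order: (i) the viscous term contributes $-\nu\|\pd_x(R*J_\delta - R*J_{\delta'})\|_{L^2_x}^2 - \nu\|\pd_x(S*J_\delta - S*J_{\delta'})\|_{L^2_x}^2\le 0$, which I would use to absorb any stray gradient terms exactly as in Step~1 of the proof of Theorem~\ref{thm:pathwiseunique}; (ii) the transport-noise term and its It\^o correction generate a standard DiPerna--Lions commutator and a double commutator, which converge to zero in $L^2(\Omega\times[0,T]\times\T)$ by \cite[Lemma~II.1]{DiPerna:1989aa} and \cite[Proposition~3.4]{Punshon-Smith:2018aa} (as in Step~4 of Theorem~\ref{thm:pathwiseunique}); (iii) the drift commutators involving $c(u)$, $\pd_x c(u)$ and the nonlinearity $\tilde c(u)(R-S)^2$ converge to zero in $L^2_{\omega,t,x}$ by Lemma~\ref{thm:c_commutator} and Remark~\ref{rem:c_commute34}, using that $\mathfrak u(R*J_\delta,S*J_\delta)\to\mathfrak u(R,S)$ via \eqref{eq:u1-u2} and Remark~\ref{rem:uRS_converge}. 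Collecting these, one obtains a stochastic differential inequality of the form $\d\xi_{\delta,\delta'}\le (1+\|R\|_{H^1_x}+\|S\|_{H^1_x})^2\,\xi_{\delta,\delta'}\,\d t + \d h_{\delta,\delta'} + \d M$, where $\xi_{\delta,\delta'}(t)$ is the sum of the two squared $L^2_x$ differences, $h_{\delta,\delta'}$ is a.s.\ nondecreasing with $\Ex|h_{\delta,\delta'}(T)|\to0$ as $\delta,\delta'\downarrow0$, and $M$ is a square-integrable martingale. Introducing the stopping time $\tau_L$ of \eqref{eq:stoptime2}, which satisfies $\tau_L\to T$ a.s., the stochastic Gronwall inequality Lemma~\ref{thm:sto_gronwall_st} gives
\begin{align*}
\Big(\Ex\sup_{t\in[0,T\wedge\tau_L]}\xi_{\delta,\delta'}(t)^{1/2}\Big)^2 \lesssim e^{CLT}\,\Ex h_{\delta,\delta'}(T\wedge\tau_L)\xrightarrow{\delta,\delta'\downarrow0}0,
\end{align*}
so $\{R*J_\delta\},\{S*J_\delta\}$ are Cauchy in $L^1(\Omega;C([0,T\wedge\tau_L];L^2(\T)))$; since $\tau_L\to T$ a.s.\ and, by \eqref{eq:energy_SJ}, $\sup_\delta\|R*J_\delta\|_{L^{2p_0}(\Omega;C_tL^2_x\text{-}w)}$ plus the $L^\infty_tL^2_x$ bound give $2p_0$-uniform integrability, a Vitali argument upgrades this to Cauchy in $L^{\overline p}(\Omega;C([0,T];L^2(\T)))$ for every $\overline p<2p_0$. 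The limit must coincide with $(R,S)$ (identified in $L^2_{\omega,t,x}$), giving the claim.

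The main obstacle is step~(ii): checking that the transport-noise double commutator error, combined with the It\^o-correction term $\tfrac12|I_5+E_{5,\delta}|^2$, produces something that vanishes. In the uniqueness proof this worked because $I_4^R=I_4^S$ and the Stratonovich-to-It\^o term \eqref{eq:pathwise_I4} cancelled the divergent $|I_5^R|^2+|I_5^S|^2$; here I must verify the identical cancellation holds with both ``solutions'' replaced by $(R,S)$ mollified at scales $\delta,\delta'$, i.e.\ that the cross-scale double-commutator bound of \cite[Proposition~3.4]{Punshon-Smith:2018aa} still applies when $V_{1,\delta},V_{2,\delta}$ are read as $V*J_\delta, V*J_{\delta'}$. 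This is true but requires care, since the cited result is stated for a single function mollified at one scale against its own renormalisation; one reduces to that case by a triangle-inequality splitting through $V*J_\delta - V*J_{\delta'} = (V - V*J_{\delta'}) - (V - V*J_\delta)$ and controlling each piece with the $L^2_tH^1_x$ bound \eqref{eq:energy_SJ}. A minor secondary point is the bookkeeping of which commutator lemmas deliver $L^2_{\omega,t,x}$ versus merely $L^2_{t,x}$ a.s.\ convergence, and combining them with the uniform $L^{p_0}$ bounds via Vitali before invoking Lemma~\ref{thm:sto_gronwall_st}; this is routine given the estimates already established in Sections~\ref{sec:SJthm}--\ref{sec:pathwise}.
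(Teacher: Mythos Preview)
Your approach is correct but takes a substantially more elaborate route than the paper. The paper's proof explicitly notes that ``we can dispense with commutators here'' and avoids both the commutator lemmas and the stochastic Gronwall machinery entirely. The key simplification you miss: since both mollified copies come from the \emph{same} solution $(R,S)$, one keeps each drift term in the undecomposed form $(\text{original drift})*(J_\delta-J_\ep)$ rather than splitting into ``main terms'' (with $u^\delta$) plus commutator errors. Each term is then bounded directly by Cauchy--Schwarz, e.g.\ $\bigl|\int(R_\delta-R_\ep)\bigl(c(u)\pd_xR\bigr)*(J_\delta-J_\ep)\,\d x\bigr|\lesssim\|R_\delta-R_\ep\|_{L^2_x}\|\pd_xR\|_{L^2_x}$, and the first factor vanishes in $L^2_{\omega,t}$ by standard mollification and \eqref{eq:energy_SJ}. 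The nonlinear term $\tilde c(u)(R^2-S^2)$ is handled via Sobolev embedding, $\|R_\delta-R_\ep\|_{L^\infty_x}\lesssim\|R_\delta-R_\ep\|_{H^1_x}$, which also vanishes in $L^2_{\omega,t}$ thanks to the viscous $L^2_tH^1_x$ bound. The It\^o correction $I_5$ is itself a mollification difference of an $L^2_{\omega,t,x}$ function and vanishes directly; the martingale is absorbed by BDG and Young's inequality. No stopping time, no Gronwall, no double-commutator subtlety is needed---so the ``main obstacle'' you flag is in fact entirely avoidable.

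What your route buys is a parallel with the uniqueness argument, at the cost of replaying its full apparatus. What the paper's route buys is a two-page computation with no delicate cancellations. The upgrade from $L^2_\omega$ to $L^{\overline p}_\omega$ in the paper is by a single H\"older interpolation against the $L^{2p_0}_\omega L^\infty_tL^2_x$ bound, rather than Vitali.
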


\begin{proof}
We use the strategy of \cite[Lemma D.1]{GHKP2022} 
and mollify \eqref{eq:vvw} with a mollifier $J_\delta$ 
to get $(R_\delta, S_\delta)  
	:= (R *J_\delta, S*J_\delta)$. 
We then show that the collection 
$\{(R_\delta, S_\delta)\}_{\delta > 0}$ is Cauchy in 
the metric space $
\big(L^{2}(\Omega; C([0,T];L^2(\T))) \big)^2$. 
For any small $\theta > 0$, the interpolation inequality 
\begin{align*}
\Ex \norm{f_\delta - f_\ep}_{C([0,T];L^2(\T))}^{2p_0 - \theta} 
&\lesssim 
\bk{\Ex \norm{f_\delta - f_\ep}_{C([0,T];L^2(\T))}^{2} }^{1/p}\\
&\qquad \times\bk{\Ex \norm{f_\delta 
	- f_\ep}_{L^\infty([0,T];L^2(\T))}^{2p_0}}^{1/q}, \\
\text{with }\quad p = 1+ \eta ,
& \quad q = \frac{1 + \eta}\eta, 
\quad \eta = \frac{2p_0 -2 - \theta}\theta,
\end{align*}
then shows that they are also Cauchy in 
$L^{2p_0 - \theta}(\Omega; C([0,T];L^2(\T)))$. 

Happily, we can dispense with commutators 
here as we are depending on the nearness 
of $\delta$ and $\ep$ to keep terms small, 
rather than on Gronwall's inequality and the 
difference between two (possibly) different 
solutions $R_1$ and $R_2$ (as in Theorem 
\ref{thm:pathwiseunique}). 
As a result, the special nonlinear structure of 
the equations matters less here.

For two small numbers $1 \gg \delta, \ep > 0$, 
we have 
\begin{align*}
&\frac12\bk{ \norm{R_\delta - R_\ep}_{L^2(\T)}^2(t)
	+  \norm{S_\delta - S_\ep}_{L^2(\T)}^2(t) }\\
&	= \frac12 \bk{ \norm{R^0_\delta - R^0_\ep}_{L^2(\T)}^2
	+  \norm{S^0_\delta - S^0_\ep}_{L^2(\T)}^2 }
	+ \int_0^t \sum_{j = 1}^5 I_j \,\d t'
	+ \int_0^t M\,\d W,
\end{align*}
where, in equivalent non-divergence form,
\begin{align*}
I_1 &:= \int_{\T} \bk{R_\delta - R_\ep}
	\bk{c(u) \,\pd_x R }*\bk{J_\delta - J_\ep}\,\d x\\
	&\qquad\,\, -   \int_{\T} \bk{S_\delta - S_\ep}
	\bk{c(u) \,\pd_x S }*\bk{J_\delta - J_\ep}\,\d x,\\
I_2 &:= \int_{\T} \bk{R_\delta - R_\ep - S_\delta + S_\ep}
	\bk{\tilde{c}(u) \bk{R^2 - S^2}}*\bk{J_\delta - J_\ep}\,\d x,\\
I_3 &:= -\nu\int_{\T}  \abs{\pd_x\bk{R_\delta - R_\ep}}^2 
	+ \abs{\pd_x\bk{S_\delta - S_\ep}}^2\,\d x, \\
I_4 &:=  \int_{\T} \bk{R_\delta - R_\ep}
	\bk{\sigma\, \pd_x \bk{\sigma \,\pd_x \bk{R + S}} }
		*\bk{J_\delta - J_\ep}\,\d x\\
	&\qquad\,\, +  \int_{\T} \bk{S_\delta - S_\ep}
	\bk{\sigma\, \pd_x \bk{\sigma \,\pd_x  \bk{R + S}} }
		*\bk{J_\delta - J_\ep}\,\d x,\\
I_5 &:=  \int_{\T}  
	\abs{\bk{\sigma \,\pd_x  \bk{R + S}} 
		*\bk{J_\delta - J_\ep}}^2\,\d x,\\
M &:= \int_{\T}  \bk{R_\delta - R_\ep}
	\bk{\sigma \,\pd_x  \bk{R + S}} 
		*\bk{J_\delta - J_\ep}\,\d x\\
	&\qquad\,\, + \int_{\T} \bk{S_\delta - S_\ep}
	\bk{\sigma \,\pd_x  \bk{R + S}}
		*\bk{J_\delta - J_\ep}\,\d x,
\end{align*}
and we have already performed the 
customary integration-by-parts in $I_3$ above. 

Estimating the terms one after another, 
we find
\begin{align*}
\abs{I_1} &\lesssim \norm{R_\delta - R_\ep}_{L^2_x}
		 \norm{\pd_x R}_{L^2_x}
+  \norm{S_\delta - S_\ep}_{L^2_x} \norm{\pd_x S}_{L^2_x},\\
\abs{I_2} &\lesssim \bk{\norm{R_\delta - R_\ep}_{L^\infty_x} 	
	+ \norm{S_\delta - S_\ep}_{L^\infty_x}} 
		\norm{R^2 - S^2}_{L^1_x}\\
	&\lesssim\bk{\norm{R_\delta - R_\ep}_{H^1_x} 	
	+ \norm{S_\delta - S_\ep}_{H^1_x}} 
	\bk{\norm{R}_{L^2}^2 + \norm{S}_{L^2_x}^2}.
\end{align*}
Using the Cauchy--Schwarz inequality and 
standard properties of mollification, 
with $R, S 
	\in L^{2p_0}(\Omega;L^2([0,T] ;H^1(\T)) \cap L^\infty([0,T];L^2(\T)))$ 
	(by \eqref{eq:energy_SJ}), 
\begin{align*}
\Ex \int_0^T \abs{I_1}\,\d t 
\lesssim \bk{\Ex \int_0^T \bk{\norm{R_\delta - R_\ep}_{L^2_x} 
	+  \norm{S_\delta - S_\ep}_{L^2_x}}^2\,\d t}^{1/2} 
	\xrightarrow{\delta, \ep \downarrow 0} 0,\\
\Ex \int_0^T \abs{I_2} 
\lesssim \bk{\Ex \int_0^T \bk{\norm{R_\delta - R_\ep}_{H^1_x} 
	+  \norm{S_\delta - S_\ep}_{H^1_x}}^2\,\d t}^{1/2} 
	\xrightarrow{\delta, \ep \downarrow 0} 0.
\end{align*}
Similarly  $\Ex \int_0^T \abs{I_5} \,\d t \to 0$ 
as $\delta, \ep \to 0$.
We integrate-by-parts in $I_4$ to get
\begin{align*}
-I_4 &= 
 \int_{\T} \bk{R_\delta - R_\ep}
	\bk{\sigma\,\pd_x \sigma \,\pd_x  \bk{R + S}}
		*\bk{J_\delta - J_\ep}\,\d x\\
	&\quad\,\, + \int_{\T} \bk{S_\delta - S_\ep}
	\bk{\sigma\, \pd_x \sigma \,\pd_x  \bk{R + S} }
		*\bk{J_\delta - J_\ep}\,\d x\\
&\quad\,\, +  \int_{\T} \pd_x\bk{R_\delta - R_\ep}
	\bk{\sigma^2\,\pd_x  \bk{R + S} }
		*\bk{J_\delta - J_\ep}\,\d x\\
	&\quad\,\, + \int_{\T} \pd_x\bk{S_\delta - S_\ep}
	\bk{\sigma^2 \,\pd_x  \bk{R + S} }
		*\bk{J_\delta - J_\ep}\,\d x.
\end{align*}
By the Cauchy--Schwarz inequality, we conclude 
as for $I_1$ and $I_2$ that 
$\Ex \int_0^T \abs{I_4} \,\d t \to 0$ as $\delta, \ep \to 0$.
Finally, by the BDG inequality and Young's inequality, 
\begin{align*}
&\Ex\sup_{t \in [0,T]}  \abs{\int_0^t M \,\d W}\\
& \lesssim \Ex \bk{\int_0^T \abs{M}^2\,\d t}^{1/2}\\
& \lesssim \Ex \bk{\int_0^T \norm{R_\delta - R_\ep}_{L^2_x}^2 
	\norm{\bk{\sigma \pd_x\bk{ R+ S}}*\bk{J_\delta - J_\ep}}_{L^2_x}^2\,\d t}^{1/2}\\
&\quad\,\,+  \Ex \bk{\int_0^T \norm{S_\delta - S_\ep}_{L^2_x}^2 
	\norm{\bk{\sigma \pd_x\bk{ R+ S}}*\bk{J_\delta - J_\ep}}_{L^2_x}^2\,\d t}^{1/2}\\
& \lesssim  \Ex \bk{\int_0^T 
	\norm{\bk{\sigma \pd_x\bk{ R+ S}}*\bk{J_\delta - J_\ep}}_{L^2_x}^2\,\d t 
	\times \sup_{t \in [0,T]}\norm{R_\delta - R_\ep}_{L^2_x}^2 }^{1/2}\\
&\quad\,\,+ \Ex \bk{\int_0^T
	\norm{\bk{\sigma \pd_x\bk{ R+ S}}*\bk{J_\delta - J_\ep}}_{L^2_x}^2\,\d t
	\times \sup_{t \in [0,T]}\norm{S_\delta - S_\ep}_{L^2_x}^2}^{1/2}\\
& \le C  \underbrace{\Ex \int_0^T\norm{ \bk{\sigma \pd_x\bk{ R+ S}}
	*\bk{J_\delta - J_\ep}}_{L^2_x}^2\,\d t}_{
		\xrightarrow{\delta, \ep \downarrow 0} 0}\\
&\quad\,\, + \frac14 \Ex \sup_{t \in [0,T]}\norm{R_\delta - R_\ep}_{L^2_x}^2 
+ \frac14\Ex\sup_{t \in [0,T]}\norm{S_\delta - S_\ep}_{L^2_x}^2.
\end{align*}
The final two terms can be absorbed into the left side.

Therefore gathering the estimates for 
$I_1$, $I_2$, $I_5$, $I_4$, and $M$, and 
using the non-positivity of $I_3$, we get
$$
\frac14\Ex \sup_{t \in [0,T]}
	\bk{ \norm{R_\delta - R_\ep}_{L^2(\T)}^2(t)
	+  \norm{S_\delta - S_\ep}_{L^2(\T)}^2(t) } 
	\xrightarrow{\delta,\ep \downarrow 0} 0,
$$
and $\{(R_\delta, S_\delta)\}_{\delta > 0}$ 
is Cauchy in $\bk{L^2(\Omega;C([0,T];L^2(\T)))}^2$. 
This proves the proposition since 
$R_\delta \to R$, $S_\delta \to S$ 
in $L^2_{\omega,t,x}$ {\em a priori}. 
\end{proof}

\section*{Acknowledgements}

The author is grateful to Kenneth Karlsen, 
Helge Holden, and Luca Galimberti for helpful 
conversations. The author also gratefully acknowledges 
support by the Research Council of Norway via the project \textit{INICE} (301538).

\subsection*{Competing interests}
The author has no relevant financial or non-financial interests to disclose.


\begin{thebibliography}{10}
\bibitem{Albeverio:2021uf}
S.~Albeverio, Z.~Brze\'{z}niak, and A.~Daletskii.
\newblock \href{https://doi.org/10.1016/j.jde.2020.12.013} {Stochastic {C}amassa-{H}olm equation with convection type noise.}
\newblock {\em J. Differential Equations}, 276:404--432, 2021.

\bibitem{AH2007}
G.~Ali and J.~Hunter.
\newblock \href{https://doi.org/10.1002/cpa.20199}{Diffractive nonlinear geometrical optics for variational wave equations and the Einstein equations.}
\newblock{\em Commun. Pure Appl. Math.}, 60(10):1522--1557, 2007.

\bibitem{AF2011}
S.~Attanasio, F.~Flandoli.
\newblock \href{https://doi.org/10.1080/03605302.2011.585681}{Renormalized solutions for stochastic transport equations and the regularization by bilinear
multiplicative noise.}
\newblock{\em Comm. Partial Differential Equations.}, 36(8):1455--1474, 2011.

\bibitem{AK2017a}
P.~Aursand and U.~Koley.
\newblock \href{https://doi.org/10.1016/j.cam.2016.12.006}{Local discontinuous Galerkin schemes for a nonlinear variational wave equation modelling liquid crystals .}
\newblock{\em J. Comput. Appl. Math.}, 317:478--499, 2017.

\bibitem{AK2017b}
P.~Aursand and U.~Koley.
\newblock{High-order energy stable numerical schemes for a nonlinear variational wave equation modeling nematic liquid crystals in two dimensions.}
\newblock{\em Int. J. Numer. Anal. Mod.}, 14:20--47, 2017.

\bibitem{BCZ2015}
A.~Bressan, G.~Chen, and Q.~Zhang.
\newblock \href{https://doi.org/10.1007/s00205-015-0849-y}{Unique conservative solutions to a variational wave equation.}
\newblock{\em Arch. Rational Mech. Anal.}, 217:1069--1101, 2015.

\bibitem{BH2016}
A.~Bressan and T.~Huang.
\newblock{Representation of dissipative solutions to a nonlinear variational wave equation.}
\newblock{\em Commun. Math. Sci.}, 14(1):31--53, 2016.

\bibitem{BZZ2007}
A.~Bressan, P.~Zhang, and Y.~Zheng.
\newblock \href{https://doi.org/10.1007/s00205-006-0014-8}{On asymptotic variational wave equations.}
\newblock{\em Arch. Rational Mech. Anal.}, 183:163--185, 2007.



\bibitem{BZ2006}
A.~Bressan and Y.~Zheng.
\newblock \href{https://doi.org/10.1007/s00220-006-0047-8}{Conservative solutions to a nonlinear variational wave equation.}
\newblock{\em Comm. Math. Phys.}, 266:471--497, 2006.

%
%
%

\bibitem{Bensoussan:1995aa}
A.~Bensoussan.
\newblock \href{https://doi.org/10.1007/BF00996149} {Stochastic Navier-Stokes equations.}
\newblock {\em Acta Applicandae Mathematica}, 38(3):267--304, 1995.

%
%

\bibitem{Breit:2018ab}
D.~Breit, E.~Feireisl, and M.~Hofmanov\'a.
\newblock \href{https://doi.org/10.1515/9783110492552}{\em Stochastically Forced Compressible Fluid Flows.}
\newblock {De Gruyter Series in Applied and Numerical Mathematics, Vol. 3}. Walter de Gruyter GmbH, Germany, 2018.

%
%
%
\bibitem{Brzezniak:2013aa}
Z.~Brze\'zniak and E.~Motyl.
\newblock \href{https://doi.org/10.1016/j.jde.2012.10.009} {Existence of a martingale solution of the stochastic
  {N}avier-{S}tokes equations in unbounded 2{D} and 3{D} domains.}
\newblock {\em J. Differential Equations}, 254(4):1627--1685, 2013.

\bibitem{Brzezniak:2011aa}
Z.~Brze\'zniak and M.~Ondrej\'at.
\newblock \href{https://doi.org/10.1080/03605302.2011.574243} {Weak solutions to stochastic wave equations with values in
  {R}iemannian manifolds.}
\newblock {\em Comm. Partial Differential Equations}, 36(9):1624--1653, 2011.

%

\bibitem{Brzezniak:2016wz}
Z.~Brze\'{z}niak, M.~Ondrej\'{a}t, and J.~Seidler.
\newblock \href{https://doi.org/10.1016/j.jde.2015.11.007} {Invariant measures for stochastic nonlinear beam and wave equations.}
\newblock {\em J. Differential Equations}, 260(5):4157--4179, 2016.

%
%

\bibitem{Chen:2021tr}
Y.~Chen, J.~Duan, and H.~Gao.
\newblock {Global well-posedness of the stochastic {C}amassa-{H}olm equation.}
\newblock {\em Commun. Math. Sci.}, 19(3):607--627, 2021.
%

\bibitem{Chen:2012aa}
Y.~Chen, H.~Gao, and B.~Guo.
\newblock \href{https://doi.org/10.1016/j.jde.2012.06.023} {Well-posedness for stochastic {C}amassa-{H}olm equation.}
\newblock {\em J. Differential Equations}, 253(8):2353--2379, 2012.

\bibitem{CL2022}
Y. Chen, X. Li. 
\newblock \href{https://doi.org/10.1016/j.jde.2022.08.027}{On the stochastic two-component Camassa-Holm system driven by pure jump noise.}
\newblock{\em J. Differential Equations}, 339:476–508, 2022.

%
%
%
%
%
%
%

\bibitem{Debussche:2011aa}
A.~Debussche, N.~Glatt-Holtz, and R.~Temam.
\newblock \href{https://doi.org/10.1016/j.physd.2011.03.009} {Local martingale and pathwise solutions for an abstract fluids model.}
\newblock {\em Phys. D}, 240(14-15):1123--1144, 2011.

%

\bibitem{DiPerna:1989aa}
R.~J. DiPerna and P.-L. Lions.
\newblock \href{https://doi.org/10.1007/BF01393835} {Ordinary differential equations, transport theory and {S}obolev
  spaces.}
\newblock {\em Invent. Math.}, 98(3):511--547, 1989.

\bibitem{DHW2020}
W.~Duan, Y.~Hu, and G.~Wang.
\newblock \href{https://doi.org/10.1016/j.jmaa.2020.124026}{Singularity and existence for a multidimensional variational wave equation arising from nematic liquid crystals.}
\newblock{\em J. Math. Anal. Appl.}, 487:124026, 2020.

\bibitem{Dud1985}
R.M.~Dudley.
\newblock \href{https://doi.org/10.1007/BFb0074949}{An extended Wichura theorem, deﬁnitions of Donsker classes, and
weighted empirical distributions.}
\newblock{\em Probability in Banach Spaces V. Lecture Notes in Math.} 1153:141--178. Springer, New York, 1985.


%
%


%

\bibitem{Fla2008}
F.~Flandoli.
\newblock \href{https://doi.org/10.1007/978-3-540-78493-7_2}{An Introduction to 3D Stochastic Fluid Dynamics.}
\newblock{\em  SPDE in Hydrodynamics: Recent Progress and Prospects},  Lecture Notes in Mathematics, Vol.1942, 
51--150, 2008.

\bibitem{Fla2011}
F.~Flandoli.
\newblock \href{https://doi.org/10.1007/978-3-642-18231-0}{\em Random Perturbation of PDEs and Fluid Dynamic Models. \'Ecole d'\'Et\'e de Probabilit\'es de Saint-Flour XL – 2010.}
\newblock{Lecture Notes in Mathematics, vol. 2015}. Springer Berlin, Heidelberg, 2011.


\bibitem{Flandoli:2021ab}
F.~Flandoli, L.~Galeati, and D~Luo.
\newblock \href{https://doi.org/10.1080/03605302.2021.1893748}{Delayed blow-up by transport noise}
\newblock{\em Comm. Partial Differential Equations}, 46:9, 1757--1788, 2021.

\bibitem{Flandoli:1995aa}
F.~Flandoli and D.~Gatarek.
\newblock \href{https://doi.org/10.1007/BF01192467} {Martingale and stationary solutions for stochastic {N}avier-{S}tokes
  equations.}
\newblock {\em Probab. Theory Related Fields}, 102(3):367--391, 1995.

\bibitem{FGP2010}
F.~Flandoli, M.~Gubinelli, and E.~Priola.
\newblock \href{https://doi.org/10.1007/s00222-009-0224-4} {Well-posedness of the transport equation by stochastic perturbation.}
\newblock {\em Invent. Math.}, 180(1):1--53, 2010.


%


\bibitem{GK2021}
L.~Galimberti and K.H.~Karlsen. 
\newblock \href{https://doi.org/10.1016/j.spa.2021.08.009}{Renormalization of stochastic continuity equations on Riemannian manifolds.}
\newblock{\em Stochastic Process. Appl.}, 142:195 – 244, 2021.


\bibitem{GHKP2022}
L.~Galimberti, H.~Holden, K.H.~Karlsen, and P.H.-C.~Pang.
\newblock \href{https://doi.org/10.1016/j.jde.2023.12.021 }{Global existence of dissipative solutions to the Camassa--Holm equation with transport noise. }
\newblock{\em J. Differential Equations,} 387:1--103, 2024.

\bibitem{GHZ1996}
R.T.~Glassey, J.K.~Hunter, and Y.~Zheng, 
\newblock{Singularities in a Nonlinear Variational Wave equation.}
\newblock{\em J. Differential Equations}, 129:49--78, 1996.

\bibitem{GHZ1997}
R.T.~Glassey, J.K.~Hunter, and Y.~Zheng
\newblock \href{https://doi.org/10.1007/978-1-4612-1972-9_3}{Singularities and oscillations in a nonlinear variational wave equation.}
\newblock{\em Singularities and Oscillations}, The IMA Volumes
in Mathematics and its Applications, Vol.~91. Edited by J.~Rauch and M.~Taylor. 
Springer Science+Business Media New York, 1997.


%
%
%

\bibitem{GR2021}
K.~Grunert and A.~Reigstad.
\newblock \href{https://doi.org/10.1007/s42985-021-00116-5}{Traveling waves for the nonlinear variational wave equation.}
\newblock{\em Partial Differential Equations and Applications}, 2:61, 2021.

\bibitem{GV2023}
B.~Guelmame and J.~Vovelle
\newblock{Global dissipative martingale solutions to the variational
wave equation with stochastic forcing.}
\newblock{\em hal-04282928}, 2023.

\bibitem{GK1996}
I.~Gy\"ongy and N.~Krylov.
\newblock \href{https://doi.org/10.1007/BF01203833}{Existence of strong solutions for It\^o's stochastic equations via approximations.}
\newblock{\em Probab. Theory Related Fields}, 105:143--158, 1996.

%


\bibitem{Holden:2020aa}
H.~Holden, K.H.~Karlsen, and P.H.C.~Pang.
\newblock \href{https://doi.org/10.1016/j.jde.2020.07.031}{The {H}unter--{S}axton equation with noise.}
\newblock {\em J. Differential Equations}, 270:725--786, 2021.
%
%

\bibitem{HKP2023}
H.~Holden, K.H.~Karlsen, and P.H.C.~Pang.
\newblock \href{https://doi.org/10.3934/dcds.2022163}{ Global well-posedness of the viscous Camassa--Holm equation with gradient noise. }
\newblock{\em Discrete Contin. Dyn. Syst.}, 43(2):568--618, 2023.

\bibitem{HKR2009}
H.~Holden, K.H.~Karlsen, and N.H.~Risebro.
\newblock \href{https://doi.org/10.1093/imanum/drn026}{A convergent finite-difference method for a nonlinear
variational wave equation.}
\newblock{\em IMA J. Numer. Anal. }, 29:539--572, 2009.

%
%

\bibitem{HR2011}
H.~Holden and X.~Raynaud.
\newblock \href{https://doi.org/10.1007/s00205-011-0403-5}{Global semigroup of conservative solutions of the nonlinear variational wave equation.}
\newblock{\em Arch. Ration. Mech. Anal.}, 201:871--964, 2011.

\bibitem{Hol2010}
C.A.~Holliman.
\newblock \href{https://doi.org/10.57262/die/1356019079}{Nonuniform dependence and well posedness for the periodic Hunter-Saxton equation.}
\newblock{\em Differential Integral Equations}, 23(11/12):1159--1194, 2010.

\bibitem{Holm:2015tc}
D.D.~Holm. 
\newblock \href{https://doi.org/10.1098/rspa.2014.0963}{ Variational principles for stochastic fluid dynamics.}
\newblock {\em Proc. A.}, 471(2176):20140963, 19, 2015.



%

\bibitem{HS1991}
J.~Hunter and R.~Saxton.
\newblock \href{https://doi.org/10.1137/0151075}{Dynamics of director fields.}
\newblock{\em SIAM J. Appl. Math.}, 51(6):1498 -- 1521, 1991.


%
\bibitem{Jakubowski:1997aa}
A.~Jakubowski.
\newblock \href{https://doi.org/10.1137/S0040585X97976052}{The almost sure skorokhod representation for subsequences in
  nonmetric spaces.}
\newblock {\em Theory Probab. Appl.}, 42:167--174, 1997.

%
%

\bibitem{KS1998}
I.~Karatzas and S.E.~Shreve.
\newblock \href{https://doi.org/10.1007/978-1-4612-0949-2}{\em Brownian Motion and Stochastic Calculus.}
\newblock{Graduate Texts in Mathematics, vol.~113}. Springer Business+Media, New York, 1998.



\bibitem{Lions:NSI}
P.-L. Lions.
\newblock {\em Mathematical topics in fluid mechanics. {V}ol. 1: Incompressible
  models}.
\newblock Oxford University Press, New York, 1996.


\bibitem{LY2018}
N.~Yi and H.~Liu.
\newblock \href{https://doi.org/10.4208/cicp.OA-2016-0189}{An energy conserving local discontinuous Galerkin method for a nonlinear variational wave equation.}
\newblock{Commun. Comput. Phys.}, 23(3):747--772, 2018.



\bibitem{MR2016}
C.~Marinellia and M.~R\"ockner.
\newblock \href{http://doi.org/10.1016/j.exmath.2015.01.002}{On the maximal inequalities of Burkholder, Davis and Gundy}
\newblock{\em Expo. Math.},  34:1--26, 2016.
%

\bibitem{Ondrejat:2010aa}
M.~Ondrej\'at.
\newblock \href{https://doi.org/10.1214/EJP.v15-789} {Stochastic nonlinear wave equations in local {S}obolev spaces.}
\newblock {\em Electron. J. Probab.}, 15:no. 33, 1041--1091, 2010.

\bibitem{Pan2023}
P.H.C.~Pang. 
\newblock{Second order commutator estimates in renormalisation theory for SPDEs with gradient-type noise.}
\newblock To appear in {\em Proceedings of the XVIII International Conference on Hyperbolic Problems: Theory, Numerics, Applications}, SIMAI Springer Series, 2023.


\bibitem{Punshon-Smith:2018aa}
S.~Punshon-Smith and S.~Smith.
\newblock \href{https://doi.org/10.1007/s00205-018-1225-5}{On the {B}oltzmann equation with stochastic kinetic transport: global
  existence of renormalized martingale solutions.}
\newblock {\em Arch. Ration. Mech. Anal.}, 229(2):627--708, 2018.

%


\bibitem{Scheutzow:2013wy}
M.~Scheutzow.
\newblock \href{https://doi.org/10.1142/S0219025713500197}{A stochastic {G}ronwall lemma.}
\newblock {\em Infin. Dimens. Anal. Quantum Probab. Relat. Top.},
  16(2):1350019, 4, 2013.
%
%

\bibitem{Simon:1987vn}
J.~Simon.
\newblock{Compact sets in the space {$L\sp p(0,T;B)$}.}
\newblock {\em Ann. Mat. Pura Appl. (4)}, 146:65--96, 1987.

%
%

\bibitem{Tang:2018aa}
H.~Tang.
\newblock \href{http://doi.org/10.1137/16M1080537}{On the pathwise solutions to the {C}amassa-{H}olm equation with
  multiplicative noise.}
\newblock {\em SIAM J. Math. Anal.}, 50(1):1322--1366, 2018.
%

\bibitem{Tang2023}
H.~Tang.
\newblock \href{https://doi.org/10.1016/j.jfa.2023.110075}{On the stochastic Euler-Poincaré equations driven by pseudo-differential/ multiplicative noise}.
\newblock{\em J. Funct. Anal.}, 285(9):110075, 2023.

\bibitem{VW1996}
A.~van der Vaart and J.A.~Wellner.
\newblock \href{https://doi.org/10.1007/978-1-4757-2545-2}{\em Weak Convergence and Empirical Processes.}
\newblock{Springer, New York, 1996.}

%
%

\bibitem{Web2016}
F.~Weber.
\newblock{Robust finite difference schemes for a nonlinear variational wave equation modeling liquid crystals.}
\newblock{\em Seminar f\"ur Angewandte Mathematik ETH Z\"urich}, Research Report No. 2013-43, 2016.

\bibitem{Xie:2020vh}
L.~Xie and X.~Zhang.
\newblock \href{https://doi.org/10.1214/19-AIHP959}{Ergodicity of stochastic differential equations with jumps and
  singular coefficients.}
\newblock {\em Ann. Inst. Henri Poincar\'{e} Probab. Stat.}, 56(1):175--229,
  2020.

%
%


\bibitem{ZZ2001a}
P.~Zhang and Y.~Zheng.
\newblock \href{https://doi.org/10.1081/PDE-100002240}{Rarefactive solutions to a nonlinear variational wave equation of liquid crystals.}
\newblock{\em Commun. in Partial Differential Equations}, 263(3,4):381--419, 2001.


\bibitem{ZZ2001b}
P.~Zhang and Y.~Zheng.
\newblock \href{https://doi.org/10.1142/S0252959901000152}{Singular and rarefactive solutions to a nonlinear variational wave equation.}
\newblock{\em Chinese Ann. Math.}, 22(2):159--170, 2001.

\bibitem{ZZ2003}
P.~Zhang and Y.~Zheng.
\newblock \href{https://doi.org/10.1007/s00205-002-0232-7}{Weak solutions to a nonlinear variational wave equation.}
\newblock{\em Arch. Rat. Mech. Anal.}, 166:303--319, 2003.

\bibitem{ZZ2005a}
P.~Zhang and Y.~Zheng.
\newblock \href{https://doi.org/10.1016/j.anihpc.2004.04.001}{Weak solutions to a nonlinear variational wave equation with general data.}
\newblock{\em Ann. Inst. H. Poincar\'{e} Anal. Non Lin\'eaire}, 22:207--226, 2005.

\bibitem{ZZ2005b}
P.~Zhang and Y.~Zheng.
\newblock \href{https://doi.org/10.1016/S1874-5717(06)80010-1}{On the global weak solutions to a variational wave equation.}
\newblock{\em Handbook of Differential Equations. Evolutionary Equations, vol 2.}, 561--648, Elsevier, Amsterdam, 2005.

\end{thebibliography}
\end{document}